\numberwithin{equation}{section}
\crefname{equation}{}{}
\crefname{assumption}{Assumption}{Assumptions}
\newtheorem{theorem}{Theorem}[section]
\newtheorem{ttheorem}{Theorem}
\newtheorem{lemma}[theorem]{Lemma}
\newtheorem{corollary}[theorem]{Corollary}
\newtheorem{assumption}{Assumption}
\newtheorem{proposition}[theorem]{Proposition}
\theoremstyle{definition}
\newtheorem{definition}[theorem]{Definition}
\theoremstyle{remark}
\newtheorem{remark}[theorem]{Remark}
\newcommand{\cF}{{\mathcal F}}
\newcommand{\cH}{{\mathcal H}}
\newcommand{\cL}{{\mathcal L}}
\newcommand{\cO}{{\mathcal O}}
\newcommand{\cT}{{\mathcal T}}
\newcommand{\cX}{{\mathcal X}}
\newcommand{\bbF}{{\mathbb F}}
\newcommand{\bbN}{{\mathbb N}}
\newcommand{\bbP}{{\mathbb P}}
\newcommand{\bbR}{{\mathbb R}}
\newcommand{\bbT}{{\mathbb T}}
\newcommand{\bbZ}{{\mathbb Z}}
\newcommand{\rd}{{\, \mathrm{d}}}
\newcommand{\diff}{\mathrm{d}}
\newcommand{\eps}{\varepsilon}
\newcommand{\ind}{\mathbbm{1}}
\newcommand{\iso}{\uppi}
\DeclareMathOperator{\Span}{span}
\DeclarePairedDelimiter{\abs}{\lvert}{\rvert}
\DeclarePairedDelimiter{\norm}{\lVert}{\rVert}
\DeclarePairedDelimiter{\cur}{\{}{\}}
\DeclarePairedDelimiter{\bra}{(}{)}
\DeclarePairedDelimiter{\sqr}{[}{]}
\newcommand{\PP}[2][]{\mathbb{P}\, \sqr[#1]{#2}}
\newcommand{\EE}[2][\big]{\mathbb{E} \sqr[#1]{#2}}
\title{Synchronization by noise for traveling pulses}
\author{Christian Kuehn}
\address{Technical University of Munich, School of Computation Information and Technology, Department of Mathematics,  
         Boltzmannstrasse 3, 85748 Garching, 
         Germany.}
\email{ckuehn@ma.tum.de}
\author{Joris van Winden}
\address{Delft University of Technology,
         Faculty of Electrical Engineering, Mathematics and Computer Science,
         Mekelweg 4, 2628 CD Delft, 
         Netherlands}
\email{J.vanWinden@tudelft.nl}
\thanks{JvW is supported by a DIAM fast-track scholarship. CK would like to thank the VolkswagenStiftung for support via a Lichtenberg Professorship.}
\keywords{Traveling pulse, synchronization by noise, phase reduction, stochastic partial differential equation}
\subjclass[2020]{%
    60H15, 
    37H15, 
    37L30, 
    37L10, 
}
\date{\today}
\begin{document}

\begin{abstract}
    We consider synchronization by noise for stochastic partial differential equations which support traveling pulse solutions, such as the FitzHugh--Nagumo equation.
    We show that any two pulse-like solutions which start from different positions but are forced by the same realization of a multiplicative noise, converge to each other in probability on a time scale $\sigma^{-2} \ll t \ll \exp(\sigma^{-2})$, where $\sigma$ is the noise amplitude.
    The noise is assumed to be Gaussian, white in time, colored and periodic in space, and non-degenerate only in the lowest Fourier mode.
    The proof uses the method of phase reduction, which allows one to describe the dynamics of the stochastic pulse only in terms of its position.
    The position is shown to synchronize building upon existing results, and the validity of the phase reduction allows us to transfer the synchronization back to the full solution.
\end{abstract}

\maketitle
    
\section{Introduction}
\label{sec:intro}

We study SPDEs with traveling pulse-like solutions, such as the stochastic FitzHugh--Nagumo equation:
\begin{equation}
    \label{eq:fhn}
    \begin{aligned}
    \partial_t u &= \nu \partial_{xx}u + u(u-a)(1-u) - v + \sigma g(u,v) \circ \partial_t W, \\
    \partial_t v &= \eps(u - \gamma v),
    \end{aligned}
\end{equation}
where $\nu > 0$, $0 < a < 1/2$, $\eps \ll 1$, $W$ is a suitable Gaussian noise and $g \colon \bbR^2 \to \bbR$ is smooth.
The parameter $\sigma > 0$ controls the amplitude of the noise, and we are especially interested in the small noise regime $\sigma \ll 1$.
Being a prototypical model for neural pulse propagation, it is well known that \eqref{eq:fhn} with $\sigma = 0$ admits a stable traveling pulse solution, moving with a fixed speed $c > 0$; see e.g.~\cite{jones_stability_1984,conley_application_1984,guckenheimer_homoclinic_2010}.
In particular, the position of the pulse at time $t_0$ can be retroactively determined from the position at any later time $t_1$ simply by subtracting $c(t_1 - t_0)$.

In the presence of noise ($\sigma > 0$), the situation is significantly different.
Although pulse-like solutions persist \cite{hamster_stability_2020,eichinger_multiscale_2022}, the noise causes a random shift of the pulse position, and in many cases this results in a change in velocity \cite{hamster_stability_2020} (see also \cite{hamster_stability_2019,mueller_random_1995,westdorp_longtimescale_2024} for results on other equations).
Moreover, in sharp contrast to the deterministic case, it has been observed that the pulse position at large times is nearly independent of the initial position \cite{kilpatrick_stochastic_2015,wang_coherence_2000,teramae_noise_2006,nakao_phasereduction_2014,pikovsky_synchronization_2001}.
Instead, the large-time position is mainly determined by stochastic forcing, a phenomenon referred to as \emph{synchronization by noise}.
However, despite strong evidence for synchronization of pulses, no mathematical proof has been given until now.

Our main result (\cref{thm:spdesync}) establishes the first rigorous proof of synchronization by noise for traveling pulse solutions to \eqref{eq:fhn}.
Specifically, we prove that any two pulse-like solutions, starting at different positions and forced by the same noise, converge to each other in probability for $t \gg \sigma^{-2}$ as $\sigma \searrow 0$. We mention already that this theoretical synchronization result can also have concrete implications for biophysical systems such as memory processes in neuroscience~\cite{fell_role_2011} and cardiac arrhythmia~\cite{luther_lowenergy_2011}. In both applications, the FitzHugh--Nagumo equation~\eqref{eq:fhn} is the baseline model problem studied. We shall return to applied aspects in~\cref{sec:outlook} as the proof of our main result is also very insightful from a practical perspective. 

\subsection{Main result}
Our result is formulated for an abstract semilinear PDE of the form
\begin{equation}
    \label{eq:pde}
    \diff u = A u \rd t + f(u) \rd t,
\end{equation}
where $u \colon \bbR^+ \times \bbR \ni (t,x) \mapsto u(t,x) \in \bbR^n$ is continuous, $f \colon \bbR^n \to \bbR^n$ is sufficiently smooth, and $A$ should be thought of as a (possibly degenerate) differential operator with constant coefficients.

We briefly describe our assumptions, which are formulated in more detail in \cref{subsec:assanal,subsec:assprob}.
The first two assumptions allow for a robust solution theory (\cref{ass:analytic}) and ensure the existence of a \emph{pulse profile} $u^*$ and \emph{pulse speed} $c \in \bbR$ such that $\hat{u}(t,x) \coloneq u^*(x - ct)$ is an orbitally stable traveling pulse solution to \eqref{eq:pde} (\cref{ass:pulse}).
It is well known that the FitzHugh--Nagumo equation \eqref{eq:fhn} (with $\sigma = 0$) fits into this setting.
For the stochastic equation, we introduce the noise amplitude $\sigma > 0$ and write
\begin{equation}
    \label{eq:spde}
    \diff u_{\sigma} = A u_{\sigma} \rd t + f(u_{\sigma}) \rd t + \sigma g(u_{\sigma}) \circ \diff W(t).
\end{equation}
We take $g \colon \bbR^n \to \bbR^n$ sufficiently smooth and let $W(t,x)$ be a scalar\footnote{There is no obstacle to considering vector-valued noise except for significant notational inconvenience.} Gaussian noise which is white in time, colored and periodic in space, and weakly non-degenerate (\cref{ass:noise}).

The nondegeneracy condition on $W$ is formulated in terms of $g$, $u^*$, the lowest Fourier modes of $W$, and an adjoint eigenfunction $\psi$ which can be calculated from \eqref{eq:pde} and the pulse profile $u^*$.
The symbol `$\circ$' in \eqref{eq:spde} indicates that the stochastic integral is interpreted in the Stratonovich sense, but this does not play a major role in the proof.

\cref{ass:analytic,ass:pulse,ass:noise} taken together already suffice to show synchronization for the reduced SDE that describes the pulse position.
However, we are only able to transfer this synchronization back to the `full' equation \eqref{eq:spde} by additionally assuming that \emph{either} the pulse speed $c$ is zero, \emph{or} that the noise $W$ has spatially homogeneous statistics (\cref{ass:scaling}).
We refer ahead to \cref{subsec:liftscaling} for further discussion of this assumption.

Let us now state the main result, which holds under \cref{ass:analytic,ass:pulse,ass:noise,ass:scaling}.
For $x \in \bbR$ and $\sigma > 0$, we let $u^x_\sigma(\cdot)$ denote the (mild) solution of \eqref{eq:spde} with initial condition $u^x_{\sigma}(0,\cdot) = u^*(\cdot - x)$. 
\begin{ttheorem}
    \label{thm:spdesync}
    Suppose that the times $(t_{\sigma})_{\sigma > 0}$ satisfy
    \begin{equation}
        \label{eq:tsigmacond}
        \lim_{\sigma \to 0} t_{\sigma}\sigma^2 = \infty,
        \qquad 
        0 \leq t_{\sigma} \leq \exp(\sigma^{-2+q})
    \end{equation}
    for some $q \in (0,2)$.
    Then for any $x,y \in \bbR$, we have
    \begin{equation}
        \label{eq:spdesync}
        \inf_{n \in \bbZ}\bra[\big]{ \norm{u^{x}_{\sigma}(t_{\sigma},\cdot + n) - u^{y}_{\sigma}(t_{\sigma},\cdot)}_\cX} \overset{\bbP}{\to} 0
    \end{equation}
    as $\sigma \to 0$.
\end{ttheorem}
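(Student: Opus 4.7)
The plan is to follow a standard three-step program: phase reduction, synchronization of the resulting phase SDE, and lifting. For the first step, I would decompose $u_\sigma(t,\cdot) = u^*(\cdot - \phi_\sigma(t)) + v_\sigma(t,\cdot)$, where the pulse position $\phi_\sigma$ is defined implicitly by the orthogonality condition $\langle v_\sigma(t,\cdot),\, \psi(\cdot - \phi_\sigma(t))\rangle = 0$, with $\psi$ the adjoint eigenfunction from \cref{ass:noise}. Applying the Stratonovich chain rule and projecting onto $\psi$ yields a scalar Stratonovich SDE
\begin{equation*}
    \diff \phi_\sigma = \bigl[c + \sigma^2 \alpha(\phi_\sigma, v_\sigma)\bigr] \rd t + \sigma \beta(\phi_\sigma, v_\sigma) \circ \diff W(t),
\end{equation*}
coupled to a semilinear SPDE for the shape fluctuation $v_\sigma$ on the spectral complement of the translation eigenspace. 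Orbital stability of the pulse (\cref{ass:pulse}), together with a stopping-time argument in the spirit of \cite{hamster_stability_2020,eichinger_multiscale_2022}, should deliver the key a priori bound $\norm{v_\sigma(t,\cdot)}_\cX \lesssim \sigma$ with high probability on times $t \leq \exp(\sigma^{-2+q})$, which exactly covers the range permitted by \eqref{eq:tsigmacond}.

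For the second step, I would treat the synchronization problem for the $\phi_\sigma$-SDE. Dropping the $v_\sigma$-dependence gives a leading-order autonomous SDE whose coefficients are $1$-periodic in $\phi$---this is exactly where \cref{ass:scaling} enters, since if $c \neq 0$ the pulse sweeps across the noise landscape, and only spatial homogeneity of $W$ prevents explicit time dependence from reappearing. The reduced SDE descends to the circle $\bbT = \bbR/\bbZ$, and by non-degeneracy of the lowest Fourier mode of $W$ from \cref{ass:noise} it is hypoelliptic with a unique smooth positive invariant density. I would then verify synchronization by establishing a strictly negative top Lyapunov exponent $\lambda < 0$ for the linearized flow via the Furstenberg--Khasminskii formula, and invoking classical synchronization-by-noise criteria for one-dimensional diffusions on the torus. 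The resulting contraction rate is of order $\sigma^2 \abs{\lambda}$, which precisely matches the lower bound $t_\sigma \sigma^2 \to \infty$.

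For the third step, once $\phi^x_\sigma(t_\sigma) - \phi^y_\sigma(t_\sigma) - N_\sigma \to 0$ in probability for some random integer $N_\sigma$---the integer ambiguity being the price paid for passing to the circle---continuity of the shift operator on $\cX$ combined with the smallness of $v_\sigma^x$ and $v_\sigma^y$ delivers \eqref{eq:spdesync} with $n = N_\sigma$. The main technical obstacle is that the full phase SDE is not truly decoupled from $v_\sigma$: its $v_\sigma$-dependent perturbations, although formally of size $O(\sigma^2)$, act over the long horizon $t_\sigma$ and must be shown not to overwhelm the $\exp(-\abs{\lambda}\sigma^2 t)$ contraction coming from the negative Lyapunov exponent. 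I expect to resolve this by a Grönwall-type argument on $\phi^x_\sigma - \phi^y_\sigma$ in which the residual-driven perturbation is absorbed into a slightly smaller effective contraction rate, which is precisely consistent with the upper bound $t_\sigma \leq \exp(\sigma^{-2+q})$ for $q \in (0,2)$.
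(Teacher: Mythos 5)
Your proposal follows the same broad three-step program as the paper (phase reduction, synchronization of the reduced SDE, lifting), but there are two concrete gaps that would sink the argument as written, as well as a methodological divergence worth noting.

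\textbf{Phase tracking.} You use the orthogonality ansatz $\langle v_\sigma,\psi(\cdot-\phi_\sigma)\rangle=0$; the paper instead uses the isochronal phase $\iso(u_\sigma)$, defined from the deterministic long-time flow. This is not a cosmetic choice: the isochron map inherits the translation symmetry of \eqref{eq:pde} cleanly (\cref{lem:pisym,lem:piderivsym}), which is what makes the reduced SDE coefficients explicit enough (\cref{eq:frakbexplicit}--\eqref{eq:simplenoise}) to carry out the time-rescaling in \cref{prop:gammarescale}. With the orthogonality ansatz you would still get a reduced SDE, but a direct analogue of \cref{prop:gammarescale} would be harder to extract.

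\textbf{The time-scale claim is incorrect.} You assert that the bound $\|v_\sigma\|_\cX\lesssim\sigma$ holds for $t\le\exp(\sigma^{-2+q})$ and that this ``exactly covers the range permitted by \eqref{eq:tsigmacond}.'' This conflates two distinct time scales. The stochastic orbital stability estimate (analogous to \cref{thm:longtermstab}) does hold on $t\lesssim\exp(\sigma^{-2+3q})$, but that only controls how close $u_\sigma$ stays to the family of translates. The much harder statement --- that the phase is approximated by the \emph{decoupled autonomous} SDE $\gamma_\sigma$ with the same initial condition --- only holds on the dramatically shorter window $t\lesssim\sigma^{-2}\log(\sigma^{-1})^{1-q}$ (\cref{thm:longtermapprox}). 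This is because the error terms $\mathcal{O}(\sigma)\|v_\sigma\|$ accumulate via the quadratic variation; the Gr\"onwall bound degrades exponentially in $\sigma^2 t$, not polynomially. Your proposed Gr\"onwall argument ``in which the residual-driven perturbation is absorbed into a slightly smaller effective contraction rate'' would have to fight exactly this exponential degradation, and as stated gives no reason to believe it wins. The paper's proof succeeds because the decoupled SDE synchronizes already by $t\sim\sigma^{-2}$, only a logarithmic factor below the validity horizon --- this is the delicate heart of the argument, and your write-up does not confront it.

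\textbf{Uniformity in the initial condition.} To conclude \eqref{eq:dphixyconvprob} you apply the reduced-SDE synchronization at time $t_\sigma$ starting from phases $\iso(u^x_\sigma(s_\sigma))$, $\iso(u^y_\sigma(s_\sigma))$, which are random and $\cF_{s_\sigma}$-measurable. You therefore need a synchronization rate \emph{uniform over the initial condition}, which is a genuine strengthening of the qualitative weak-synchronization statement that comes out of the Flandoli--Gess--Scheutzow machinery. The paper proves this separately (\cref{thm:phisigmasync}) using uniform exponential mixing (\cref{prop:mixing}) together with the cocycle property and independence. Your plan to ``invoke classical synchronization-by-noise criteria for one-dimensional diffusions on the torus'' does not supply this, and it is not a trivial add-on.

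\textbf{Role of \cref{ass:scaling}.} Your explanation --- that spatial homogeneity prevents ``explicit time dependence from reappearing'' in the reduced SDE --- is not quite what happens. The reduced SDE for the phase is autonomous regardless of \cref{ass:scaling}: its coefficients depend only on the current phase. What \cref{ass:scaling} buys is a time-change that converts $\phi_\sigma$ (depending on $\sigma$ through both $\sigma^2\mathfrak{a}\,\mathrm{d}t$ and $\sigma\mathfrak{b}\,\mathrm{d}\beta$, plus the fixed drift $c\,\mathrm{d}t$) into a $\sigma$-independent cocycle up to a deterministic shift (\cref{prop:gammarescale}). When $c\ne 0$ the linear drift breaks the parabolic scaling, and spatial homogeneity is used to show $\mathfrak{a}$ is constant and that the noise can be ``rotated'' to absorb the $ct$ translation. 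Without this rescaling one has no quantitative handle on the synchronization time, which is exactly what is needed to fit inside the $\sigma^{-2}\log(\sigma^{-1})$ validity window.
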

In other words, \emph{any} two solutions to \eqref{eq:spde} which start in the set of translates $\cur{u^*(\cdot - x) : x \in \bbR}$ synchronize (modulo integer translations) on the time scale $\sigma^{-2} \ll t_{\sigma} \ll \exp(\sigma^{-2})$.
\begin{remark}
   The function space $\cX$ is defined in \cref{ass:analytic}, and always embeds into the space of continuous functions by assumption. 
\end{remark}
\begin{remark}
    The infimum over $n$ in \eqref{eq:spdesync} cannot be removed, since the periodic noise guarantees that $u^{x+n}_\sigma(t,z+n) = u^x_\sigma(t,z)$ always holds for $n \in \bbN$.
    Our results can be straightforwardly adapted to traveling pulses on periodic domains, in which case the infimum can be omitted.
    However, for the sake of readability and in view of applications we have chosen the setting where \eqref{eq:spde} is posed on the real line.
\end{remark}

\subsection{Synchronization by noise}
\label{subsec:introsync}
There has been a continued mathematical interest in synchronization by noise, starting with pioneering results on stochastic flows of diffeomorphisms by \textcite{baxendale_asymptotic_1986,martinelli_small_1988}, and continuing with seminal works by \textcite{crauel_additive_1998,arnold_random_1998}.
More recently, synchronization for SPDEs has been considered, with a particular focus on the Chafee--Infante (or Allen--Cahn) equation \cite{bianchi_additive_2016,caraballo_effect_2006,blumenthal_pitchfork_2023}.
Abstract criteria for synchronization have been given in \cite{flandoli_synchronization_2017,flandoli_synchronization_2017a,newman_necessary_2018}, which has led to synchronization results for porous media equations \cite{flandoli_synchronization_2017a} and gradient type S(P)DEs \cite{flandoli_synchronization_2017,gess_lyapunov_2024}, all with additive and highly non-degenerate noise.

A common thread in these works is that there are two main strategies for showing synchronization\footnote{A notable exception is \cite{bianchi_additive_2016}, which instead relies on the noise being constant in space.}:
\begin{itemize}
    \item Exploit an \emph{order-preserving} structure \cite{flandoli_synchronization_2017a,chueshov_structure_2004,crauel_additive_1998,arnold_orderpreserving_1998,martinelli_small_1988,caraballo_effect_2006}.
    \item Combine \emph{asymptotic stability} and \emph{irreducibility} properties \cite{gess_lyapunov_2024,flandoli_synchronization_2017,newman_necessary_2018,baxendale_statistical_1991}.
\end{itemize}
Currently, it seems that showing synchronization for \eqref{eq:fhn} by attacking the equation head-on is totally out of reach with both methods.
Firstly, since \eqref{eq:fhn} is a two-component SPDE, we deem it unlikely that a useful order-preserving structure exists.
Secondly, showing asymptotic stability generally requires one to verify negativity of the top Lyapunov exponent.
Even with detailed information of the invariant measure (which is unavailable for \eqref{eq:fhn}) this is a challenging task, see e.g.\ \cite{gess_lyapunov_2024}.
Thirdly, to have sufficient irreducibility properties, it is typically needed to impose strict nondegeneracy conditions on the noise.
In contrast, we require nondegeneracy only in the lowest Fourier mode (see \cref{ass:noise}), a setting in which mixing (typically a necessary condition for synchronization) is already hard to prove \cite{hairer_ergodicity_2006}.

We completely bypass these difficulties using the method of \emph{phase reduction}.
The key insight is that the dynamics of a pulse-like solution can be accurately described by tracking only the position of the pulse, and forgetting all other information.
To show that the pulse synchronizes, it thus suffices to show that the position synchronizes.

In the physics literature, the possibility of showing synchronization by noise through phase reduction has been extensively discussed, especially for limit-cycle oscillators \cite{nakao_phase_2016,nakao_synchrony_2005,nakao_noiseinduced_2007,nakao_effective_2010,pikovskii_synchronization_1984,pikovsky_phase_1997,pikovsky_phase_2000,teramae_robustness_2004,teramae_noise_2006,wang_coherence_2000}.
However, the (temporal) roughness of the noise makes the phase reduction into quite a delicate procedure, which is seen in \cite{yoshimura_phase_2008} and in \cref{rem:wrongreduction} ahead.
This further motivates the importance of a rigorous proof, since subtle errors in the phase reduction can immediately invalidate a synchronization result.

\subsection{Phase reduction}
\label{subsec:phasereduction}
Consider a solution $u_{\sigma}$ to \eqref{eq:spde}, which has the deterministically stable pulse profile $u^*$ as its initial condition.
When $\sigma \ll 1$ the solution $u_{\sigma}(t)$ will still resemble a translate of $u^*$ with high probability, as long as $t \ll \exp(\sigma^{-2})$.
Thus, it is possible to associate to $u_{\sigma}$ an auxiliary scalar process $\gamma_{\sigma}$, henceforth referred to as the \emph{phase}, such that we have
\begin{equation}
    \label{eq:introphasereduce}
    u_{\sigma}(t,x) = u^*(x - \gamma_{\sigma}(t)) + \cO(\sigma).
\end{equation}
The infinite-dimensional dynamics of $u_{\sigma}$ then effectively reduce to the one-dimensional dynamics of $\gamma_{\sigma}$, and this will be our main tool to show synchronization.

However, the condition \eqref{eq:introphasereduce} only characterizes $\gamma_{\sigma}$ up to $\cO(\sigma)$, and therefore does not lead to a canonical definition of the phase process.
This is reflected in the recent literature on stochastic traveling waves/pulses, in which a variety of (non-equivalent) phase tracking methods, each with their own strengths and weaknesses, have been proposed and developed \cite{kruger_front_2014,hamster_stability_2019,inglis_general_2016,vanwinden_noncommutative_2024,adams_existence_2024}.
We will not give an overview of the different methods, but we emphasize that whichever method is preferable typically depends on the result one is trying to prove.

The phase tracking method of choice in this paper is the \emph{isochronal phase}, which originates in the context of nonlinear oscillators \cite{winfree_patterns_1974}. It was put on a mathematical footing by \textcite{guckenheimer_isochrons_1975} and has been applied to the study of transient patterns in \cite{adams_existence_2024,adams_isochronal_2025}.
Briefly, the idea is as follows. Fix a profile $v(x)$, and let $u(t,x)$ be the solution to \eqref{eq:pde} with initial condition $u(0,x) = v(x)$.
Assuming that the profile $v(\cdot)$ resembles a translate of the pulse profile $u^*$, it follows from the deterministic stability theory that there exists $\iso(v) \in \bbR$ such that
\begin{equation*}
    \lim_{t \to \infty} \norm{u(t,\cdot) - u^*(\cdot - ct - \iso(v))} = 0.
\end{equation*}
Moreover, $\iso(v)$ is uniquely determined by $v$ (justifying the notation), and the approximation $v \approx u^*(\cdot - \iso(v))$ is valid.
The map $\iso$ is referred to as the \emph{isochron map}.
Note that $\iso$ is constructed from the dynamics of \eqref{eq:pde}, and thus is a purely deterministic mapping from the state space of \eqref{eq:pde} into $\bbR$.

To track the position of the stochastic pulse $u_{\sigma}$, we may now define the \emph{isochronal phase}\footnote{This notion of isochronicity differs from the one introduced in \cite{engel_random_2021}, where a (time-dependent and anticipating) random isochron map is constructed directly from the stochastic dynamics.}
process simply by setting $\gamma_{\sigma} \coloneq \iso(u_{\sigma})$.
To obtain a detailed description of the phase, we apply the It\^o formula (which is shown to hold in \cite{adams_existence_2024}) to $\iso(u_{\sigma})$ and use \eqref{eq:introphasereduce} to simplify the resulting expression.
This results in an (approximate) SDE for $\gamma_{\sigma}$ which no longer makes any reference to $u_{\sigma}$.
In view of~\eqref{eq:introphasereduce} the dynamics of $u_{\sigma}$ are then characterized by this SDE, and the phase reduction is complete.
We refer ahead to \cref{subsec:dothereduction} for the full derivation of the phase-reduced SDE, which is given by \eqref{eq:sdegamma}.

\subsection{Proof strategy}
Phase reduction can be a powerful tool to show synchronization.
We demonstrate this by means of a (surprisingly robust) analogy with the well-studied double-well SDE, given by
\begin{equation}
    \label{eq:introtoy}
    \diff X = (X - \abs{X}^2 X) \rd t + \sigma \rd W(t), \qquad X \in \bbR^2,
\end{equation}
where $W = (\beta_1, \beta_2)$ is a two-dimensional Brownian motion and $\sigma > 0$.
We first observe that the set $S = \cur{X : \abs{X} = 1}$ forms a connected set of stable equilibria for the deterministic equation.
Thus, it is expected that the radial component of a typical trajectory of \eqref{eq:introtoy} will satisfy $\abs{X} = 1 + \mathcal{O}(\sigma)$.
On the other hand, the angular component of $X$ will diffusively wander on a one-dimensional torus.
Indeed, by transforming to polar coordinates via $(X_1,X_2) = (R \cos(\Phi), R \sin(\Phi))$, we find \cite{vorkastner_approaching_2020}:
\begin{equation*}
    \diff \Phi = \sigma R^{-1}(-\sin(\Phi) \rd \beta_1(t) + \cos(\Phi) \rd \beta_2(t)).
\end{equation*}
Since we expect that $R \approx 1$ when $\sigma$ is small, we can safely neglect the dynamics of $R$ to reduce the dynamics of $X$ to those of the angular coordinate $\Phi$, which satisfies
\begin{equation}
    \label{eq:introtoyphase}
    \diff \Phi \approx \sigma (-\sin(\Phi) \rd \beta_1(t) + \cos(\Phi) \rd \beta_2(t)).
\end{equation}
This SDE is known to synchronize \cite{baxendale_statistical_1991}, and by rescaling time we see that the synchronization occurs on the time scale $t \sim \sigma^{-2}$.
Moreover, one can check that \eqref{eq:introtoyphase} is valid on the time scale $t \sim \sigma^{-2}\log(\sigma^{-1})$.
Hence, it is expected that we can transfer the synchronization of \eqref{eq:introtoyphase} back to \eqref{eq:introtoy} to conclude that $X$ synchronizes for $t \gg \sigma^{-2}$ in the limit $\sigma \searrow 0$.
We remark that it is crucial that the reduced dynamics accurately capture the full dynamics on a time scale which is \emph{strictly longer} than the typical time until synchronization.
Already for \eqref{eq:introtoy} this is delicate, as the two time scales differ only by a logarithmic factor.

The argument sketched above consists of three main steps.
\begin{enumerate}[label=(\roman*)]
    \item \label{it:step:1} Reduce the dynamics by describing \eqref{eq:introtoy} in terms of \eqref{eq:introtoyphase}.
    \item \label{it:step:2} Show synchronization of the reduced dynamics.
    \item \label{it:step:3} Transfer synchronization back to the full dynamics.
\end{enumerate}
In our actual proof, \ref{it:step:1} is achieved using the isochronal phase as outlined in \cref{subsec:phasereduction}.
Although the resulting SDE is not as explicit as \eqref{eq:introtoyphase}, we still obtain a detailed description of the coefficients by exploiting symmetries of the isochron map $\iso$ which are inherited from \eqref{eq:pde}.
This allows us to carry out \ref{it:step:2} by combining abstract results by \textcite{flandoli_synchronization_2017} with a calculation of the Lyapunov exponent and a control-theoretic argument.
The time scales of approximation and synchronization are the same as for \eqref{eq:introtoyphase}, which finally allows \ref{it:step:3} to succeed.

\subsection{Outline}
In \cref{sec:setting} we provide notation, specify our setting and assumptions, and formulate preliminaries regarding random dynamical systems.
In \cref{sec:phasereduction} we derive the approximate SDE describing the pulse position and prove error estimates.
We simplify and analyze the phase-reduced SDE in \cref{sec:reducedsdeanalysis}.
We show that the reduced SDE synchronizes sufficiently fast, which allows us to transfer the synchronization behavior back to the full SPDE to prove \cref{thm:spdesync}.

\section{Setting, assumptions and preliminaries}
\label{sec:setting}
\subsection{Notation}
\label{subsec:notation}
We use the convention that $\bbN$ does not include zero, and write $\bbN_0 = \bbN \cup \cur{0}$, as well as $\bbR^+ = [0,\infty)$.
We write $\bbT = \bbR /  \bbZ$ to denote the (flat) one-dimensional torus with unit length. 

We write $\norm{\cdot}_\cX$ for the norm of a general Banach space $\cX$.
When $(E,d)$ is a metric space and $x \in E$, $r > 0$, we write $B_r(x)$ for the open ball around $x$ of radius $r$.
We write $C_b(E)$ for the Banach space of bounded continuous functions $f \colon E \to \bbR$, equipped with the supremum norm.
When $k \in \bbN$ and $A$ is either $\bbR$, $\bbT$, or an open subset of a Banach space, we write $C^k(A;\cX)$ for the Banach space of functions $f \colon A \to \cX$ which have $k$ bounded continuous (Fr\'echet) derivatives, equipped with the usual norm.
In the case $\cX = \bbR$, we simply write $C^k(A)$.
We also write $\ell^2(\bbZ;\cX)$ for the space of (norm) square-integrable functions $f \colon \bbZ \to \cX$. If $\cX = \bbR$, we write $\ell^2(\bbZ)$ or even just $\ell^2$.
We write $H^{s,p}(\bbR;\bbR^n)$ for the Bessel potential space of functions from $\bbR$ to $\bbR^n$ with smoothness $s$ and integrability $p \in (1,\infty)$.
We write $\EE[]{\cdot}$ for the expectation associated with the probability space $(\Omega,\cF,\bbP)$ which is specified in \cref{subsec:probrds}.
The notation $A \xrightarrow{\bbP} B$ indicates convergence in probability with respect to $\bbP$.

We use the notation $A \lesssim B$ to mean that there exists a constant $C$, possibly depending on the objects introduced in \cref{ass:analytic,ass:pulse,ass:noise}, such that $A \leq CB$.
In the statements and proofs of \cref{thm:longtermstab,thm:longtermapprox}, we will additionally use the notation $\sigma \ll_q 1$ to mean that $\sigma$ is sufficiently small, based (only) on $q$ and the objects from \cref{ass:analytic,ass:pulse,ass:noise}.

\subsection{Analytic setting and linear stability}
\label{subsec:assanal}
We begin by fixing the state space for \eqref{eq:spde} by letting $p \in [2,\infty)$, $s > 1/p$, $n \in \bbN$ and setting $\cX \coloneq H^{s,p}(\bbR;\bbR^n)$.
The process $u$ is intended to be continuous in time with values in $\cX$.
\begin{remark}
    \label{rem:analytic}
    The condition on $p$ is used in the proof of \cref{thm:longtermstab}, which requires Gaussian tail estimates for $\cX$-valued stochastic integrals.
    Such tail estimates are available if $\cX$ is $2$-smooth \cite{seidler_exponential_2010}, a property which holds for Bessel spaces iff $p \in [2,\infty)$ (this follows e.g.\ from \cite[Proposition 2.1]{pinelis_optimum_1994} and the fact that $L^p$ is isometrically isomorphic to $H^{s,p}$).
    The condition $s > 1/p$ is necessary for $\cX$ to be a Banach algebra, a fact which is used at several points in the argument.
    By the Sobolev embedding, this restricts us to work exclusively with continuous functions.
\end{remark}

To have a robust solution theory for \eqref{eq:pde}, we make the following assumption:
\begin{assumption}[PDE setting]
    \label{ass:analytic}
    The following conditions hold:
    \begin{itemize}
        \item $A$ generates a $C_0$-semigroup on $\cX$ which commutes with translations.
        \item The Nemitskii map $u \mapsto f(u)$ is seven times Fr\'echet differentiable from $\cX$ to $\cX$.
    \end{itemize}
\end{assumption}

The next assumption guarantees existence of a stable traveling pulse solution to \eqref{eq:pde}.
\begin{assumption}[Stable traveling pulse]
    \label{ass:pulse}
    There exist $c \in \bbR$ and $u^* \in \cX$ such that $u(t,x) = u^*(x - ct)$ solves \eqref{eq:pde} (in the mild sense) and $\partial_x^{(n)} u^* \in \cX$ for $n \in \cur{1,2,3,4}$.
    Moreover, the linear operator $\cL = A + c\partial_x + f'(u^*)$ generates a $C_0$-semigroup $(P(t))_{t \geq 0}$ on $\cX$, and there exist projections $\Pi^c,\Pi^s$ on $\cX$ and $C,a > 0$ such that
    \begin{itemize}
        \item $I_\cX = \Pi^c + \Pi^s$,
        \item $\Pi^c \cX = \Span \cur{\partial_x u^*}$,
        \item $\norm{P(t)\Pi^s f}_{\cX} \leq C e^{-at}\norm{f}_{\cX}, \quad t \geq 0$, $f \in \cX$.
    \end{itemize}
\end{assumption}
\begin{remark}
    \label{rem:translateustar}
    Since $\partial_x u^* \in \cX$, it follows that we have the estimate
    \begin{equation*}
        \norm{u^*(\cdot - x) - u^*(\cdot - y)}_\cX \lesssim \abs{x - y}, \qquad x,y \in \bbR.
    \end{equation*}
\end{remark}
By duality, we see from \cref{ass:pulse} that there exists a (bounded) functional $\psi \colon \cX \to \bbR$ such that
\begin{equation}
    \label{eq:Picpsi}
    \Pi^c f = -\psi(f) \partial_x u^* \eqcolon -\langle \psi,f\rangle \partial_x u^*, \qquad f \in \cX,
\end{equation}
where the brackets denote the duality pairing.
We prefer to write our equations in terms of $\psi$ rather than $\Pi^c$, and will use the right-hand side of \eqref{eq:Picpsi} whenever possible.
In many cases it holds that $\psi$ is a smooth function and satisfies $\cL^* \psi = 0$, where $\cL^*$ is the formal $L^2$-adjoint of $\cL$.

\subsection{Noise}
\label{subsec:assprob}
In order to have suitable ergodic properties of the phase-reduced SDE, we restrict our noise to be periodic in space.
By rescaling the spatial variable, we may additionally assume that the period is $1$ without loss of generality. This motivates the expansion of the noise in terms of the orthonormal basis $(e_k)_{k \in \bbZ}$ of $L^2(\bbT)$ given by
\begin{equation}
    \label{eq:basis}
    e_k(x) = \begin{cases}
        \sqrt{2}\cos(2\pi k x), & k > 0 , \\
        1, & k = 0, \\
        \sqrt{2}\sin(2\pi k x), & k < 0.
    \end{cases}
\end{equation}
We thus let $\alpha = (\alpha_k)_{k \in \bbZ}$ be a sequence of coefficients and let the noise be given by:
\begin{equation}
    \label{eq:defnoise}
    W(t,x) = \sum_{k \in \bbZ} \alpha_k e_k(x) \beta_k(t),
\end{equation}
where $(\beta_k)_{k \in \bbZ}$ is a sequence of independent Brownian motions (see \cref{subsec:probrds} for the structure of our probability space).
Even though \eqref{eq:defnoise} is not the most general noise possible, it is a flexible formulation which is commonplace in the literature on SPDEs (see e.g. \cite{hairer_ergodicity_2006,galeati_convergence_2020,fischer_existence_2018}).
Furthermore, it has the benefit of allowing us to formulate the coming assumptions in terms of only the coefficients $(\alpha_k)_{k \in \bbZ}$.

\begin{assumption}[Noise regularity and nondegeneracy]
    \label{ass:noise}
    Let $\nu \coloneq s - 1/p$ (recall $\cX = H^{s,p}(\bbR;\bbR^n)$).
    The Nemitskii map $u \mapsto g(u)$ is four times Fr\'echet differentiable from $\cX$ to $\cX$, and the noise coefficients satisfy
    \begin{equation}
        \label{eq:noisereg}
        \sum_{k \in \bbZ} \abs{k}^{2\nu} \alpha_k^2  < \infty.
    \end{equation}
    Furthermore, both $\alpha_{-1}$ and $\alpha_1$ are nonzero and there exists $x \in \bbR$ such that
    \begin{equation}
        \label{eq:noisenondegen}
        \langle \psi g(u^*), \sin(2 \pi\cdot + x) \rangle \neq 0,
    \end{equation}
    where $u^*,\psi$ are as in \cref{ass:pulse} and \eqref{eq:Picpsi}.
\end{assumption}
\begin{remark}
    \label{rem:noisereg}
    Since $\nu > 0$ by the conditions on $s$ and $p$, it follows that \eqref{eq:defnoise} converges almost surely in the H\"older space $C^{\alpha}$ for $\alpha < \nu$, which in particular forces the noise to be continuous in space.
    Since our state space $\cX$ always embeds into the space of continuous functions (see \cref{rem:analytic}) and we do not assume any smoothing properties, the condition that $(\alpha_k)_{k \in \bbZ} \in \ell^2(\bbZ)$ thus forms a natural barrier for the noise regularity.
    In our intended applications we can however take $\nu$ arbitrarily close to zero.
\end{remark}
Under \cref{ass:analytic,ass:noise}, it follows from well-known arguments (see e.g.\ \cite{daprato_stochastic_1992,brzezniak_stochastic_1995}) that \eqref{eq:spde} has a unique (local) mild solution.
As $\cX$ is a Banach algebra and $\norm{e_k}_\cX \lesssim \abs{k}^{\nu}$, we have
\begin{equation}
    \sum_{k \in \bbZ} \alpha_k^2 \norm{g(u)e_k}_\cX^2 \lesssim \norm{g(u)}_\cX^2 \sum_{k \in \bbZ} \alpha_k^2 \abs{k}^{2\nu}, \qquad u \in \cX,
\end{equation}
which implies that the relevant stochastic integrals converge in the topology of $\cX$.
Moreover, even though the solution $u_{\sigma}$ could potentially blow up in finite time, the stability of the pulse (see \cref{thm:longtermstab} ahead) guarantees that such a blow-up becomes increasingly unlikely in the small noise limit $\sigma \searrow 0$.
Hence, for our purposes it is harmless to think of the solution $u_{\sigma}$ as existing for all times.

We now state our final assumption.
We have formulated it separately from \cref{ass:noise}, since it is only needed once near the end of the proof, where it is used to transfer synchronization results from the phase-reduced SDE back to the SPDE.
\begin{assumption}
    \label{ass:scaling}
    At least one of the following conditions hold:
    \begin{itemize}
        \item $c = 0$,
        \item $\alpha_{k} = \alpha_{-k}$ for every $k \in \bbN$.
    \end{itemize}
\end{assumption}
Note that the first condition in \cref{ass:scaling} can always be satisfied by changing to a co-moving coordinate frame (in which $c = 0$) before adding noise of the form \eqref{eq:defnoise}.
The alternative condition is equivalent to the statistics of the noise being spatially homogeneous.
For a detailed discussion of the implications of \cref{ass:scaling} and the possibility to lift it, we refer ahead to \cref{subsec:liftscaling}.

\subsection{Probability, RDS, synchronization}
\label{subsec:probrds}
We now provide the technical setup and preliminaries for our use of random dynamical systems (RDS) in \cref{sec:reducedsdeanalysis}.
We will keep the RDS theory to a minimum by introducing only the concepts which we directly use.
Note that the stated definitions are standard, and robustly generalize to more involved settings.
For detailed expositions, we refer the reader to \cite{arnold_random_1998,daprato_ergodicity_1996}.

In our setting, the only `source' of randomness is $W(t)$, which is constructed from independent one-dimensional Brownian motions by \eqref{eq:defnoise}.
Hence, to construct our probability space we let $\Omega = C(\bbR \times \bbZ)$, let $\cF$ be the Borel $\sigma$-field of the compact-open topology on $\Omega$, and let $\bbP$ be the probability measure on $(\Omega,\cF)$ such that the random variables $\beta_k(t) \coloneq \omega(t,k)$ form a sequence of independent two-sided Brownian motions satisfying $\beta_k(0) = 0$.
We also define the family of $\sigma$-fields $\bbF = (\cF_{s,t})_{-\infty\leq s \leq t \leq \infty}$ by taking $\cF_{s,t}$ to be the $\sigma$-field generated by the increments $\cur{ \omega(u,k) - \omega(r,k) :  k \in \bbZ,\, s \leq r \leq u \leq t  }$.
We abbreviate $\cF_t = \cF_{-\infty, t}$. 
Notice that $\cF_{r,s}$ (and thus, $\cF_s$) is independent of $\cF_{t,u}$ when $s \leq t$.

In order to `shift time' in the probability space, we let $\theta = (\theta_t)_{t \in \bbR}$ be the family of transformations of $\Omega$ given by
\begin{equation*}
    \theta_t(\omega)(s,k) = \omega(s + t,k) - \omega(t,k), \qquad t,s \in \bbR,\,k \in \bbZ.
\end{equation*}
Then $\theta_t \circ \theta_s = \theta_{s+t}$ and $\theta_t^*\bbP = \bbP$, so the tuple $(\Omega,\cF,\bbP,\theta)$ forms a (continuous-time) ergodic dynamical system.

Throughout the rest of the section, let $(S,d)$ be a separable complete metric space.
\begin{definition}
    A \emph{perfect cocyle} over $\theta$ is a jointly measurable map $\phi \colon \bbR^+ \times \Omega \times S \to S$, $(t,\omega,x) \mapsto \phi(t,\omega,x)$, which satisfies:
    \begin{itemize}
        \item $\phi(0,\omega,x) = x$ for all $\omega \in \Omega, x \in S$,
        \item $\phi(t+s,\omega,x) = \phi(s, \theta_t \omega, \phi(t,\omega,x)$ for all $t,s \in \bbR^+, \omega \in \Omega, x \in S$.
    \end{itemize}
\end{definition}
The second of these conditions is commonly referred to as the \emph{cocycle property}.
In our setting, the cocycle $\phi(t,\omega,x)$ will be the solution map of an SDE with smooth coefficients driven by the Brownian motions $\beta_k(\cdot) = \omega(\cdot,k)$.
In this case, it holds that $(t,x) \mapsto \phi(t,\omega,x)$ is continuous for all $\omega \in \Omega$.
Furthermore, $\phi(t,\theta_s\cdot,x)$ is $\cF_{s,s+t}$-measurable, and from the independence between the elements of $\bbF$ it follows that $\phi(t,\theta_s\cdot,x)$ is independent of $\cF_{-\infty,s}$.
The tuple $(\Omega,\cF,\bbP,\bbF,\theta,\phi)$ then forms a \emph{continuous white noise random dynamical system}.
Since every element of the tuple except the cocycle has already been fixed, we will only use $\phi$ to refer to the random dynamical system.

When $\phi$ is a continuous white noise RDS, we may define an associated Markovian semigroup $P_t$ via
\begin{equation}
    P_t f(x) = \EE{f(\phi(t,\omega,x))},
\end{equation}
for bounded and measurable $f \colon S \to \bbR$.
We recall that a probability measure $\mu$ on $S$ is called an \emph{invariant measure} for $P_t$ if the identity $\int_S P_t f \rd \mu = \int_S f \rd \mu$ holds for all $t \geq 0$ and all bounded measurable $f \colon S \to \bbR$.
An invariant measure $\mu$ is \emph{ergodic} if for any bounded measurable $f \colon S \to \bbR$, we have $P_t \ind_A = \ind_A$, $\mu$-almost everywhere only if $f$ is constant $\mu$-almost everywhere (several different characterizations exist).
Finally, we say that $\mu$ is \emph{strongly mixing} if $\lim_{t \to \infty} P_t f(x) = \int_S f \rd \mu$ for every $x \in S$ and $f \in C_b(S)$.
This condition is stronger than the usual notion of strong mixing (see \cite[Corollary 3.4.3]{daprato_ergodicity_1996}), but is chosen to remain consistent with \cite{flandoli_synchronization_2017}.

We will use the concept of weak synchronization, introduced in \cite[Definition 2.16]{flandoli_synchronization_2017}.
Recall that a \emph{weak point attractor} for $\phi$ is a random compact set $A(\omega)$ which is invariant (i.e. $A(\theta_t \omega) = \phi(t,\omega,A(\omega))$) and which satisfies
\begin{equation*}
    \lim_{t \to \infty} \PP{d(\phi(t,\omega,x),A(\theta_t \omega)) > \eps} = 0, \qquad x \in S,\, \eps > 0.
\end{equation*}
A weak point attractor is \emph{minimal} if it is contained in any other weak point attractor.
\begin{definition}
    A white noise RDS \emph{synchronizes weakly} if there exists a minimal weak point attractor $a(\omega)$ which consists of a single point $\bbP$-almost surely.
\end{definition}

\section{Phase reduction}
\label{sec:phasereduction}
In the study of stochastic traveling pulses, a key element is the need to define and track the position (henceforth referred to as the \emph{phase}) of the pulse.
Various phase tracking methods have been proposed 
\cite{kruger_front_2014,hamster_stability_2019,inglis_general_2016,vanwinden_noncommutative_2024,adams_existence_2024}, primarily for the purpose of showing stochastic orbital stability of the pulse.
As mentioned in \cref{subsec:phasereduction}, the phase tracking method of choice in this work is the isochronal phase.
This is motivated by several properties which are favorable from a theoretical perspective.
Firstly, unlike phase tracking methods based on minimization or orthogonality conditions, the isochronal phase map is indifferent to the topology of the state space $\cX$ in which \eqref{eq:spde} is solved.
Hence, the phase tracking is decoupled from the solution theory, allowing for flexibility in the choice of $\cX$.
Secondly, the isochronal phase map is defined on the entire basin of attraction of the traveling pulse.
Thus, the phase tracking does not break down for `technical' reasons, and only ceases to function when the solution arguably no longer resembles a traveling pulse anyway.
Thirdly, the isochron map $\iso$ straightforwardly inherits all the symmetry properties of \eqref{eq:pde}, a fact which we rely on to simplify the analysis of the phase-reduced SDE.

\subsection{Isochronal phase}
We now give a brief introduction to the idea behind the isochronal phase and prove some basic properties.
Recall that \cref{ass:pulse} assures that \eqref{eq:pde} has a traveling pulse solution with profile $u^*$.
Hence, for $\eps$ sufficiently small, the set
\begin{equation}
    \label{eq:pulseneighbourhood}
    \Gamma_{\eps} \coloneq \cur{u \in \cX : \inf_{x \in \bbR} \norm{u(\cdot) - u^*(\cdot - x)}_\cX < \eps}
\end{equation}
is contained in the basin of attraction of the traveling pulse solutions.
Indeed, writing $u^v$ for the solution to \eqref{eq:pde} with initial condition $u(0) = v$, we have the following theorem:
\begin{theorem}
    \label{thm:isochron}
    There exists $\delta > 0$, such that for every $v \in \Gamma_{\delta}$ there exists a unique $\iso(v) \in \bbR$ such that
    \begin{equation}
        \label{eq:isochronconv}
        \lim_{t \to \infty}\norm{u^v(t,\cdot) - u^*(\cdot - ct - \iso(v))}_\cX = 0.
    \end{equation}
    Moreover, if $v \in \Gamma_{\delta'}$ with $\delta' \leq \delta$, we have
    \begin{equation}
        \label{eq:isochronacc}
        \norm{\cT_{\iso(v)}u^* - v}_\cX \lesssim \delta'.
    \end{equation}
\end{theorem}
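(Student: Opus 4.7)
The plan is to deduce this from the classical orbital stability with asymptotic phase theorem, adapted to the functional-analytic setting of the paper. The essential ingredient is already supplied by \cref{ass:pulse}: the linearization $\cL$ has a one-dimensional neutral direction $\Span\{\partial_x u^*\}$ tangent to the translation orbit, together with a complementary stable subspace on which the semigroup decays exponentially at rate $a$. This is precisely the spectral gap needed to run a nonlinear stability argument with dynamic phase adjustment.

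First I would pass to the co-moving frame $\tilde{u}(t,x) = u(t, x + ct)$, in which $u^*$ becomes a stationary solution. For $\tilde v \in \Gamma_\delta$ with $\delta$ small, the implicit function theorem applied to the orthogonality condition $\Pi^c\bra{\cT_{-x}\tilde v - u^*} = 0$ yields a unique reference phase $x_0 = x_0(\tilde v)$ satisfying $\abs{x_0 - x} \lesssim \norm{\tilde v - u^*(\cdot - x)}_\cX$ for any nearby translation parameter $x$; invertibility of the relevant derivative follows from $\Pi^c \partial_x u^* = \partial_x u^* \neq 0$. Along the flow, define $\phi(t)$ dynamically by the same orthogonality condition and set $w(t) := \tilde u(t) - u^*(\cdot - \phi(t))$. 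Differentiating the constraint yields a coupled system: an ODE for $\phi$ whose right-hand side is quadratic in $w$ (so $\dot\phi$ vanishes with $w$), together with a PDE for $w$ whose linear part is a translate of $\cL$ restricted to its stable subspace.

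A standard Duhamel / fixed-point bootstrap then produces constants $K, \alpha > 0$ such that $\norm{w(t)}_\cX \leq K e^{-\alpha t}\norm{w(0)}_\cX$ and $\abs{\dot\phi(t)} \lesssim e^{-\alpha t}\norm{w(0)}_\cX$ for all $t \geq 0$, provided $\delta$ is small. The second bound gives $\phi(t) \to \phi_\infty$ with $\abs{\phi_\infty - x_0(\tilde v)} \lesssim \norm{w(0)}_\cX \lesssim \delta$, so setting $\iso(\tilde v) := \phi_\infty$ and translating back to the original frame establishes \eqref{eq:isochronconv}. For \eqref{eq:isochronacc}, the triangle inequality together with \cref{rem:translateustar} gives $\norm{\cT_{\iso(\tilde v)}u^* - \tilde v}_\cX \leq \norm{\cT_{\iso(\tilde v)}u^* - \cT_{x_0}u^*}_\cX + \norm{\cT_{x_0}u^* - \tilde v}_\cX \lesssim \abs{\iso(\tilde v) - x_0} + \delta' \lesssim \delta'$. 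Uniqueness is immediate: any two candidates $\iso_1 \neq \iso_2$ satisfying \eqref{eq:isochronconv} would force $u^*(\cdot - \iso_1) \equiv u^*(\cdot - \iso_2)$, which is ruled out by decay of $u^* \in \cX \hookrightarrow C_0(\bbR)$ at infinity.

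The main obstacle is the nonlinear stability bootstrap in the third step. One must bound the nonlinear remainder $f(u^*(\cdot - \phi) + w) - f(u^*(\cdot - \phi)) - f'(u^*(\cdot - \phi))w$ in $\cX$ uniformly in $\phi$, and close a Duhamel estimate against the exponential decay of the semigroup while simultaneously handling the feedback of $\dot\phi$ into the $w$-equation. This is classical for stable traveling waves, and in the present setting it is made tractable by the Banach algebra property of $\cX$ (\cref{rem:analytic}) together with the Fr\'echet differentiability of $f$ from \cref{ass:analytic}, which deliver the required quadratic control $\norm{f(u^* + w) - f(u^*) - f'(u^*)w}_\cX \lesssim \norm{w}_\cX^2$ for small $w$.
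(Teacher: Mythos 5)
The paper does not actually supply a proof of \cref{thm:isochron}: it states the theorem and defers its justification to the classical literature, namely orbital stability with asymptotic phase and the invariant foliation theory referenced in the remark that follows (citing \cite{bates_invariant_2000}), with the isochron framework for transient patterns drawn from \cite{adams_existence_2024,adams_isochronal_2025}. Your argument is precisely that classical proof, carried out in the present functional-analytic setting, and it is correct: the spectral splitting and exponential decay from \cref{ass:pulse} are exactly the hypotheses the argument needs, the implicit function theorem on the secular condition $\Pi^c\bra{\cT_{-x}\tilde v - u^*}=0$ produces the local coordinate chart near the translation orbit, and the Duhamel bootstrap coupling $(w,\phi)$ closes because the nonlinear remainder is quadratic in $\norm{w}_\cX$, which the Banach algebra property of $\cX$ and the Fr\'echet differentiability of $f$ indeed deliver. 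The estimate \eqref{eq:isochronacc} then follows from the triangle inequality exactly as you write, and your uniqueness argument is sound: translation invariance of the $H^{s,p}$ norm forces $u^* = \cT_{\iso_2-\iso_1}u^*$, which is impossible for a non-constant profile in $\cX \hookrightarrow C_0(\bbR)$ unless $\iso_1 = \iso_2$. In short, you have reconstructed the proof the paper takes for granted, and done so correctly.

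One small point worth being explicit about if this were to be written out in full: the implicit function theorem gives a \emph{locally} unique phase $x_0(\tilde v)$ near the best-fit translate, but for this to define a function on all of $\Gamma_\delta$ one also needs that $\delta$ is small enough that $\tilde v$ cannot be $\delta$-close to two well-separated translates of $u^*$. This holds because $\inf_{\abs{y}\geq r}\norm{u^* - \cT_y u^*}_\cX > 0$ for every $r>0$ (again using non-constancy and decay of $u^*$), so for $\delta$ sufficiently small the minimizing translate is unique up to an $\mathcal{O}(\delta)$ window and the IFT chart is globally well-defined on $\Gamma_\delta$. Your proof implicitly uses this but does not say it; it is worth stating since the theorem asserts uniqueness of $\iso(v)$ over all of $\bbR$, not merely locally.
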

\begin{remark}
    In the language of dynamical systems, the isochronal phase provides a \emph{foliation} of $\cX$ near the \emph{center manifold} $\cur{u^*(\cdot - x) : x \in \bbR}$, consisting of \emph{leaves} of the form $\cur{u \in \cX : \pi(u) = x}$ for $x \in \bbR$, each of which is invariant under the flow of \eqref{eq:pde} and terminates at the manifold, see \cite{bates_invariant_2000}.
    However, we will not use this terminology.
\end{remark}
For $v \in \Gamma_{\delta}$, the isochronal phase is now defined to be $\iso(v)$, and the map $\iso \colon \Gamma_{\delta} \to \bbR$ is referred to as the isochron map.
Note that the isochron map $\iso$ should not be confused with the circle constant $\pi$, which has the usual meaning.

Regarding the noisy equation, it should be clear that we cannot expect to have a convergence similar to \eqref{eq:isochronconv} for a solution $u_{\sigma}$ to \eqref{eq:spde}.
However, as long as $u_{\sigma}(t) \in \Gamma_{\delta}$, the isochronal phase $\iso(u_{\sigma})$ is well-defined.
Moreover, from \eqref{eq:isochronacc} it is still sensible to interpret $\iso(u_{\sigma})$ as the position of the perturbed traveling pulse $u_{\sigma}$.

\subsection{Isochron map derivatives}
As was done in \cite{adams_existence_2024,adams_isochronal_2025}, we want to obtain a detailed description of $\iso(u_{\sigma})$ using It\^o's formula.
Hence, it will be necessary to have sufficient regularity of the isochron map and to obtain as much information about the derivatives as possible.
In terms of regularity, the following proposition suffices.
\begin{proposition}[\cite{adams_existence_2024}]
    \label{prop:isochronreg}
    The isochron map $\iso \colon \Gamma_{\delta} \to \bbR$ has five bounded Fr\'echet dervatives in the topology of $\cX$.
\end{proposition}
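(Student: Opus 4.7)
The plan is to reduce the proposition to the smoothness of a local ``projection phase'' that is then globalized via the PDE flow, exploiting the spectral decomposition of \cref{ass:pulse}.

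First, pass to the co-moving frame in which $u^*$ becomes a stationary solution of the translated PDE and $\cL$ is its linearization. Define
\begin{equation*}
    F(v,x) \coloneq \langle \psi, v(\cdot + x) - u^*\rangle,
\end{equation*}
where $\psi$ is the adjoint functional from \eqref{eq:Picpsi}. Since $\psi$ is a bounded linear functional, $u^*$ has four derivatives in $\cX$ by \cref{ass:pulse}, and the translation action $(v,x)\mapsto v(\cdot + x)$ is smooth on a neighborhood of $u^*$ in $\cX \times \bbR$, the map $F$ is five times Fr\'echet differentiable. Because $\Pi^c$ is a projection onto $\Span\{\partial_x u^*\}$, we have $\psi(\partial_x u^*) = -1$, and hence $\partial_x F(u^*,0) \neq 0$. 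The implicit function theorem produces a smooth local phase map $\iso_0 \colon U \to \bbR$ with five bounded Fr\'echet derivatives on a neighborhood $U$ of the translate manifold $\cur{u^*(\cdot - x) : x \in \bbR}$, satisfying $\iso_0(u^*(\cdot - x)) = x$.

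Second, extend $\iso_0$ to all of $\Gamma_\delta$ using the flow. By \cref{thm:isochron} (or rather, a preliminary version of its existence part that follows from \cref{ass:pulse} alone), for $v \in \Gamma_\delta$ with $\delta$ small enough there is a uniform time $T$ such that $u^v(t,\cdot + ct) \in U$ for all $t \geq T$. Set
\begin{equation*}
    \iso(v) \coloneq \iso_0\bra[\big]{u^v(T, \cdot + cT)}.
\end{equation*}
Independence of $T$ follows because $\iso_0$ is conserved along trajectories of the deterministic flow inside $U$: this can be verified by differentiating $F(u^v(t,\cdot+ct), \iso_0(u^v(t,\cdot+ct)))$ in $t$ and using $\cL^* \psi = 0$, which is precisely the dual statement of $\Pi^c P(t) = \Pi^c$. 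Smoothness of $\iso$ on $\Gamma_\delta$ then follows from composition: the time-$T$ flow map $v \mapsto u^v(T,\cdot + cT)$ has five bounded Fr\'echet derivatives on bounded subsets of $\cX$, obtained inductively from the variation-of-constants formula together with the seven-fold differentiability of $f$ granted by \cref{ass:analytic}. Agreement with the asymptotic characterization \eqref{eq:isochronconv} then follows from the exponential contraction of $P(t)\Pi^s$.

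The main technical obstacle is obtaining the smoothness of the flow map with uniform derivative bounds on $\Gamma_\delta$. Iterating the variation-of-constants formula gives, at the $k$-th order, a Duhamel-type expression involving $k$-tuples of semigroup factors acting on nested combinations of lower derivatives, and a Fa\`a di Bruno-style bookkeeping argument is needed to keep these bounded as $v$ ranges over $\Gamma_\delta$. Once such estimates are in place, closing the induction up to $k = 5$ is routine but notationally heavy, which is one of the reasons that the precise argument is borrowed from \cite{adams_existence_2024}. A secondary obstacle is handling the switching between the (translation-covariant) co-moving frame and the lab frame while keeping track of the five Fr\'echet derivatives; this is taken care of by the regularity $\partial_x^{(n)} u^* \in \cX$ for $n \leq 4$ in \cref{ass:pulse}, together with Taylor expansion in the translation variable to recover the missing fifth derivative.
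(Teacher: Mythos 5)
The paper itself does not prove this statement; it imports it verbatim from \cite{adams_existence_2024}. So the only question is whether your argument is correct, and unfortunately it has a genuine gap in the globalization step.

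The problem is the claim that the local projection phase $\iso_0$ (defined implicitly by $F(v,\iso_0(v))=0$ with $F(v,x)=\langle\psi, v(\cdot+x)-u^*\rangle$) is \emph{conserved} along the co-moving nonlinear flow inside $U$, which you use to conclude that $\iso(v)\coloneq\iso_0\big(u^v(T,\cdot+cT)\big)$ is independent of $T$. If you actually carry out the differentiation you propose, writing $w(t)=u^v(t,\cdot+ct+\iso_0(u^v(t)))-u^*$ and using the traveling-wave equation plus $\cL^*\psi=0$, the term $\langle\psi,\cL w\rangle$ indeed drops out, but you are left with
\begin{equation*}
    \frac{\diff}{\diff t}\,\iso_0\bigl(u^v(t,\cdot+ct)\bigr) = \frac{\langle\psi,\; f(u^*+w)-f(u^*)-f'(u^*)w\rangle}{1-\langle\psi,\partial_x w\rangle},
\end{equation*}
which is $\cO(\norm{w}^2_\cX)$ but not zero. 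The identity $\Pi^c P(t)=\Pi^c$ only expresses conservation under the \emph{linearized} flow; the nonlinear correction $N(w)=f(u^*+w)-f(u^*)-f'(u^*)w$ produces a genuine drift of the projection phase. This is precisely why the isochronal phase $\iso$ and the projection phase $\iso_0$ differ beyond first order, and why the former is not obtained by a finite-time evaluation of the latter. To repair the construction one would have to take $T\to\infty$, i.e.\ $\iso(v)=\lim_{T\to\infty}\iso_0(u^v(T,\cdot+cT))$, and then prove that the convergence of the first five Fr\'echet derivatives is uniform on $\Gamma_\delta$ — which is exactly the hard part of the proof, and the content of the invariant-foliation arguments (Fenichel/Bates--Lu--Zeng type, cf.\ the reference to \cite{bates_invariant_2000} in the remark following \cref{thm:isochron}) used in \cite{adams_existence_2024}. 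Your outline does not address this uniform-convergence step at all.

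A secondary but fixable issue: as stated, ``the translation action $(v,x)\mapsto v(\cdot+x)$ is smooth on a neighborhood of $u^*$ in $\cX\times\bbR$'' is false (translation loses derivatives on $\cX$), though the composed map $F$ is still smooth because the derivatives can be moved onto the smooth kernel $\psi$ via $\langle\psi,v(\cdot+x)\rangle=\langle\psi(\cdot-x),v\rangle$. Worth stating correctly, since the naive version of the claim would also infect the flow-map smoothness argument.
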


Since $\iso$ encodes information about the long-term behavior of a nonlinear PDE, it is expected that its derivatives are in general highly nontrivial to compute.
However, we benefit from transferring the symmetries of \eqref{eq:pde} to $\iso$.
From now on, for $x \in \bbR$ we let $\cT_x$ denote the right-translation operator given by $\cT_x f = f(\cdot - x)$.

\begin{lemma}[Symmetry]
    \label{lem:pisym}
    For $x \in \bbR$ and $f \in \Gamma_{\delta}$ we have $\iso(\cT_x f) = x + \iso(f)$.
\end{lemma}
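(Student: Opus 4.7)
The proof is a direct consequence of two facts: first, translation is a symmetry of the deterministic equation~\eqref{eq:pde}, and second, the isochronal phase is uniquely determined by the asymptotic behavior of the solution, which is preserved under isometries of $\cX$. The plan is to verify each of these in turn.

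First I would check that $\cT_x f \in \Gamma_\delta$ whenever $f \in \Gamma_\delta$. This is immediate: the infimum in the definition~\eqref{eq:pulseneighbourhood} is over all translates of $u^*$, so a translation of $f$ only shifts the index of the infimum. Moreover, $\cT_x$ is a linear isometry on $\cX = H^{s,p}(\bbR;\bbR^n)$, which will be used throughout.

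Next I would argue that the deterministic flow commutes with $\cT_x$, i.e. $u^{\cT_x f}(t,\cdot) = \cT_x u^f(t,\cdot)$ for all $t \geq 0$. Indeed, by \cref{ass:analytic} the semigroup generated by $A$ commutes with translations, and $f$ acts pointwise via a Nemitskii map, so it also commutes with $\cT_x$. Applying $\cT_x$ to the mild formulation of~\eqref{eq:pde} for initial datum $f$ then produces the mild formulation for initial datum $\cT_x f$, and uniqueness of mild solutions yields the claim.

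With these in hand, the identity follows. By \cref{thm:isochron} applied to $f$,
\begin{equation*}
    \lim_{t \to \infty} \norm{u^f(t,\cdot) - u^*(\cdot - ct - \iso(f))}_\cX = 0.
\end{equation*}
Applying the isometry $\cT_x$ inside the norm and using the commutation with the flow gives
\begin{equation*}
    \lim_{t \to \infty} \norm{u^{\cT_x f}(t,\cdot) - u^*(\cdot - ct - (\iso(f) + x))}_\cX = 0.
\end{equation*}
The uniqueness clause of \cref{thm:isochron} applied to $\cT_x f \in \Gamma_\delta$ then forces $\iso(\cT_x f) = x + \iso(f)$, completing the argument. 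I do not anticipate any significant obstacle; the only care needed is to confirm translation invariance of the PDE flow from \cref{ass:analytic}, and this requires nothing more than the stated commutation of the semigroup with $\cT_x$.
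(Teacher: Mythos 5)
Your proof is correct and follows essentially the same route as the paper: use translation-commutativity of the flow (inherited from \cref{ass:analytic}) and the fact that $\cT_x$ is an isometry to transfer the defining limit \eqref{eq:isochronconv} from $f$ to $\cT_x f$, then invoke uniqueness. The paper's version is a one-line sketch of exactly this; your added detail (verifying $\cT_x f \in \Gamma_\delta$ and spelling out the commutation and uniqueness steps) is accurate and fills in what the paper leaves implicit.
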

\begin{proof}
    Since $A$ commutes with translations, we have the identity $u^{\cT_x v}(t) = \cT_x [u^v(t)]$. Substituting this into \eqref{eq:isochronconv}, the claim follows.
\end{proof}
As a second key observation, we note that the derivative $\iso'(u^*)$ can be effectively characterized in terms of $\psi$ from \eqref{eq:Picpsi}.
Although similar statements have been shown for different notions of phase, it seems that \cref{lem:dpiexplicit} has not been observed before in the context of the isochronal phase for traveling pulses.
\begin{lemma}[First derivative]
    \label{lem:dpiexplicit}
    For $v \in \cX$ we have the identity
    \begin{equation}
        \label{eq:piprime}
        \iso'(u^*)[v] = \langle \psi,v\rangle,
    \end{equation}
    where $\psi$ is as in \eqref{eq:Picpsi}.
\end{lemma}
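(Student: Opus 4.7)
The plan is to derive \eqref{eq:piprime} by showing that both sides agree on the center subspace $\Pi^c \cX = \Span\{\partial_x u^*\}$ (using translation symmetry) and simultaneously vanish on the stable complement $\Pi^s \cX$ (using the spectral gap). Together with the decomposition $v = \Pi^c v + \Pi^s v = -\langle\psi,v\rangle\,\partial_x u^* + \Pi^s v$, this gives the identity.

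On the center subspace, I would apply \cref{lem:pisym} with $f = u^*$: since $\iso(u^*) = 0$ (by the defining property \eqref{eq:isochronconv} evaluated at $v = u^*$, together with uniqueness) we have $\iso(\cT_x u^*) = x$. Differentiating at $x = 0$ gives $\iso'(u^*)[-\partial_x u^*] = 1$, i.e.\ $\iso'(u^*)[\partial_x u^*] = -1$. On the other hand, applying $\Pi^c$ to $\partial_x u^* \in \Pi^c \cX$ and using \eqref{eq:Picpsi} yields $\langle \psi, \partial_x u^*\rangle = -1$, so both sides of \eqref{eq:piprime} agree on $\Span\{\partial_x u^*\}$.

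The nontrivial step is the vanishing of $\iso'(u^*)$ on $\Pi^s \cX$. My plan here is to first establish a flow invariance of $\iso$: by the semigroup property $u^v(t+s) = u^{u^v(t)}(s)$ and the defining convergence \eqref{eq:isochronconv}, one finds $\iso(u^v(t)) = \iso(v) + ct$ whenever $v \in \Gamma_\delta$. I would then differentiate this relation in $v$ at $v = u^*$. The Fr\'echet derivative of the solution map $v \mapsto u^v(t)$ at $u^*$ is, after passing to the co-moving frame, given by the linearized semigroup $P(t)$; more precisely, one obtains $\partial_\eps u^{u^* + \eps v}(t)\big|_{\eps=0} = \cT_{ct}[P(t) v]$. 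Combining this with \cref{lem:pisym} (differentiated in the translation variable) produces the invariance
\begin{equation*}
    \iso'(u^*)[P(t) v] = \iso'(u^*)[v], \qquad v \in \cX, \; t \geq 0.
\end{equation*}
For $v \in \Pi^s \cX$, the decay estimate in \cref{ass:pulse} gives $\|P(t) v\|_\cX \lesssim e^{-at}\|v\|_\cX \to 0$, and since $\iso'(u^*)$ is a bounded linear functional by \cref{prop:isochronreg}, letting $t \to \infty$ forces $\iso'(u^*)[v] = 0$. The decomposition then yields $\iso'(u^*)[v] = \iso'(u^*)[-\langle\psi,v\rangle\,\partial_x u^*] = \langle\psi,v\rangle$.

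The main obstacle is the derivation of the invariance $\iso'(u^*) \circ P(t) = \iso'(u^*)$: it requires tracking translations carefully between the lab and co-moving frames and justifying differentiability of $v \mapsto u^v(t)$ with derivative the linearized semigroup. The former is a bookkeeping exercise once \cref{lem:pisym} is applied in its differentiated form, while the latter follows from standard variation-of-constants arguments under \cref{ass:analytic,ass:pulse}. Everything else is essentially an application of the spectral decomposition and the boundedness of $\iso'(u^*)$.
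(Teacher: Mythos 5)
Your proposal is correct, and it arrives at the same identity by a slightly different route than the paper. The paper's proof (after passing to the co-moving frame so that $c=0$) differentiates the terminal limit $\lim_{t\to\infty} u^v(t) = \cT_{\iso(v)} u^*$ in $v$ at $v = u^*$ in two ways: on one side $\partial_v u^v(t)|_{u^*}[w] = P(t)w \to \Pi^c w = -\langle\psi,w\rangle\,\partial_x u^*$, and on the other side the chain rule on $\cT_{\iso(v)}u^*$ gives $-\iso'(u^*)[w]\,\partial_x u^*$; comparing coefficients yields \eqref{eq:piprime} for arbitrary $w$ in one stroke. You instead split $v = \Pi^c v + \Pi^s v$ and handle the two pieces separately: the center direction via \cref{lem:pisym} alone (never touching $P(t)$), and the stable complement via the intermediate invariance $\iso'(u^*)\circ P(t) = \iso'(u^*)$ followed by the exponential decay of $P(t)\Pi^s$. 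The key shared ingredient in both arguments is that $\partial_v u^v(t)|_{u^*}$ is (a translate of) the linearized semigroup $P(t)$. What your route buys is the explicit invariance statement $\iso'(u^*)\circ P(t) = \iso'(u^*)$, which is conceptually the dual form of the adjoint eigenfunction equation and cleanly isolates why the stable directions carry no phase information; what the paper's route buys is brevity, since it never needs the spectral splitting explicitly and handles all $w$ uniformly. One small presentational point: the invariance step needs the derivative $\iso'(u^*)$ to be a bounded functional before one can pass the limit $t\to\infty$ through it — you correctly cite \cref{prop:isochronreg} for this, which is also used (implicitly, for rigor) in the paper's version of the argument.
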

\begin{proof}
    By transforming to a co-moving coordinate frame, we may assume without loss of generality that $c = 0$.
    We differentiate $u^v$ (which was defined as the solution to \eqref{eq:pde} with $u^v(0) = v$) with respect to the initial condition.
    We write $u'[w]$ for the derivative of $u^v$ with respect to $v$ in the direction $w$ evaluated at $u^*$.
    By \eqref{eq:pde} and the chain rule, we see that $u'[w]$ satisfies:
    \begin{equation*}
        \diff u'[w] = A u'[w] \rd t + f'(u^*)u'[w]\rd t = \cL u'[w] \rd t,
    \end{equation*}
    with initial condition $u'[w](0) = w$.
    By \cref{ass:pulse} it then follows that $u'[w](t) = P(t)w$ and furthermore:
    \begin{equation}
        \label{eq:dpiexplicitilim1}
        \lim_{t \to \infty} u'[w](t) = \lim_{t \to \infty}\Pi^c w + P(t)\Pi^s w = \Pi^c w = -\langle \psi,w \rangle \partial_x u^*,
    \end{equation}
    where we have used \eqref{eq:Picpsi} for the final identity.
    On the other hand, by \cref{thm:isochron} it holds that $\lim_{t \to \infty} u^v(t) = \cT_{\iso(v)} u^*$.
    Differentiating this in the same way as before we find
    \begin{equation}
        \label{eq:dpiexplicitilim2}
        \lim_{t \to \infty} u'[w](t) = -(\cT_{\iso(u^*)} \partial_x u^*)\iso'(u^*)[w] = -\iso'(u^*)[w] \partial_x u^*.
    \end{equation}
    The claim follows by comparing \eqref{eq:dpiexplicitilim1} and \eqref{eq:dpiexplicitilim2}.
    Although this calculations was formal, it can be made rigorous by using \cref{ass:analytic,ass:pulse} and the mild formulation of \eqref{eq:pde}, since all involved objects have sufficient smoothness.
\end{proof}

The characterization \eqref{eq:piprime} is highly effective from a practical standpoint, since $\psi$ can be computed as the solution to a one-dimensional ODE.
Unfortunately, expressions as nice as \eqref{eq:piprime} are not available for higher derivatives of $\iso$.
Although one can extend the proof method of \cref{lem:dpiexplicit} to second order to compute $\iso''(u^*)$, this results in an expression which involves a convolution with $P(t)$ similar to \cite[(2.46)]{hamster_stability_2020}.
Note that calculating the second derivative is not a mere curiosity, since it has been observed that $\iso''(u^*)$ determines (to leading order) the noise-induced change in speed of the traveling pulse \cite[Theorem 2.3]{giacomin_small_2018} \cite[\S 2.2]{hamster_stability_2020} \cite[\S 4.2]{adams_isochronal_2025}.

However, the following symmetry properties (which are essentially inherited from the translational symmetry of \eqref{eq:pde}) suffice for our proofs.
\begin{lemma}[Symmetry of derivatives]
    \label{lem:piderivsym}
    For $x \in \bbR$, $u \in \Gamma_{\delta}$ and $v,w\in \cX$ we have
    \begin{subequations}
    \label{eq:piderivsym}
    \begin{align}
        \label{eq:dpisym}
        \iso'(\cT_x u)[v] &= \iso'(u)[\cT_{-x}v], \\
        \label{eq:ddpisym}
        \iso''(\cT_x u)[v,w] &= \iso''(u)[\cT_{-x}v,\cT_{-x}w],
    \end{align}
    \end{subequations}
    and similar for higher derivatives.
\end{lemma}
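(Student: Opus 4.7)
The plan is to differentiate the translation identity of \cref{lem:pisym} with respect to $u$ and exploit the linearity of $\cT_x$. Indeed, \cref{lem:pisym} asserts the functional identity $\iso(\cT_x u) = x + \iso(u)$ for all $u \in \Gamma_{\delta}$, and \cref{prop:isochronreg} gives us five bounded Fr\'echet derivatives of $\iso$, so we are free to differentiate both sides in $u$. Since $\cT_x \colon \cX \to \cX$ is a bounded linear operator (it commutes with everything in \cref{ass:analytic} and preserves $\cX$), its Fr\'echet derivative is $\cT_x$ itself, while all higher derivatives vanish.

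First I would differentiate once. By the chain rule applied to $u \mapsto \iso(\cT_x u)$, the left-hand side differentiated in direction $v$ gives $\iso'(\cT_x u)[\cT_x v]$, while the right-hand side gives $\iso'(u)[v]$ (the constant $x$ drops out). Thus
\begin{equation*}
    \iso'(\cT_x u)[\cT_x v] = \iso'(u)[v], \qquad v \in \cX.
\end{equation*}
Replacing $v$ by $\cT_{-x}v$ and using $\cT_x \cT_{-x} = I_\cX$ yields \eqref{eq:dpisym}.

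Next I would differentiate once more. Applying the chain rule to the identity $\iso'(\cT_x u)[\cT_x v] = \iso'(u)[v]$ in direction $w$, and noting that $v$ is held fixed while only the base point $u$ varies, one obtains
\begin{equation*}
    \iso''(\cT_x u)[\cT_x w, \cT_x v] = \iso''(u)[w,v].
\end{equation*}
Substituting $v \mapsto \cT_{-x}v$ and $w \mapsto \cT_{-x}w$ produces \eqref{eq:ddpisym}. The higher-order identities follow by the same procedure: induction on $k$ shows that
\begin{equation*}
    \iso^{(k)}(\cT_x u)[\cT_x v_1,\ldots, \cT_x v_k] = \iso^{(k)}(u)[v_1,\ldots,v_k]
\end{equation*}
for $k \leq 5$, and substituting $v_i \mapsto \cT_{-x} v_i$ concludes the argument.

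There is essentially no analytic obstacle here: the identity is algebraic, and the only subtlety is that each differentiation step is rigorously justified by \cref{prop:isochronreg} together with the fact that $\cT_x$ is a bounded linear endomorphism of $\cX$ (hence infinitely Fr\'echet differentiable with $D\cT_x = \cT_x$ and vanishing higher derivatives). The mild technicality worth flagging is simply ensuring $\cT_x u \in \Gamma_{\delta}$ whenever $u \in \Gamma_{\delta}$, which is immediate from the translation invariance of the norm in the definition \eqref{eq:pulseneighbourhood} of $\Gamma_\eps$.
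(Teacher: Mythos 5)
Your proof is correct and follows exactly the approach the paper takes — the paper's entire proof is the single line ``Differentiate the identity from \cref{lem:pisym},'' and your write-up is precisely the detailed expansion of that instruction, including the chain-rule step, the use of $D\cT_x = \cT_x$, and the substitution $v \mapsto \cT_{-x}v$. The technicality you flag about $\cT_x u \in \Gamma_{\delta}$ is a reasonable sanity check and does hold by translation invariance of the $\cX$-norm.
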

\begin{proof}
    Differentiate the identity from \cref{lem:pisym}.
\end{proof}

Combining \eqref{eq:piprime} and \eqref{eq:dpisym} we also find the identity
\begin{equation}
    \label{eq:dpiustartrans}
    \iso'(\cT_x u^*)[v] = \langle \psi,\cT_{-x}v\rangle = \langle \cT_x \psi, v \rangle ,\qquad x \in \bbR,\,v \in \cX,
\end{equation}
which will be used in the sequel.

\subsection{Reduced phase SDE}
\label{subsec:dothereduction}
With the isochron derivatives characterized, we begin the analysis of the dynamics of the isochronal phase of the solution $u_{\sigma}$ to \eqref{eq:spde}.
The aim is to derive an approximate SDE for $\iso(u_{\sigma})$ which is autonomous and does not involve the `full' solution $u_{\sigma}$.

Converting \eqref{eq:spde} to the equivalent It\^o formulation, we see that $u_{\sigma}$ satisfies:
\begin{equation*}
    \diff u_{\sigma} = A u_{\sigma} \rd t + f(u_{\sigma}) \rd t + \tfrac{1}{2}\sigma^2 \sum_{k \in \bbZ} \alpha_k^2\,  g'(u_{\sigma})g(u_{\sigma})e_k^2 \rd t + \sigma \sum_{k \in \bbZ}\alpha_k g(u_{\sigma})e_k \rd \beta_k(t),
\end{equation*}
where the stochastic integral is interpreted in the (mild) It\^o sense.
Motivated by the coming application of It\^o's formula, we now define $\mathfrak{a} \colon \Gamma_{\delta} \to \bbR$ and $\mathfrak{b}_k \colon \Gamma_{\delta} \to \bbR$ by
\begin{subequations}
\label{eq:deffrakab1}
\begin{align}
    \label{eq:deffraka1}
    \mathfrak{a}(u) &\coloneq \tfrac{1}{2}\sum_{k \in \bbZ} \alpha_k^2 \bra[\big]{\iso'(u)[g'(u)g(u) e_k^2] + \iso''(u)[g(u)e_k,g(u)e_k]}, \\
    \label{eq:deffrakb1}
    \mathfrak{b}_k(u) &\coloneq \alpha_k \iso'(u)[g(u)e_k], \qquad k \in \bbZ.
\end{align}
\end{subequations}

Applying the It\^o formula shown by \textcite{adams_isochronal_2025} to $\pi(u_{\sigma})$, we then see that the isochronal phase satisfies
\begin{equation}
    \label{eq:piuito}
     \diff\iso(u_{\sigma}) = c \rd t 
     + \sigma^2\mathfrak{a}(u_{\sigma}) \rd t 
     + \sigma\sum_{k \in \bbZ}\mathfrak{b}_k(u_{\sigma}) \rd \beta_k(t).
\end{equation}
At this point the right-hand side of \eqref{eq:piuito} still involves the full solution $u_{\sigma}$.
However, motivated by \cref{thm:isochron}-\eqref{eq:isochronacc} and the expectation that $u_{\sigma} \in \Gamma_{\delta}$ with high probability, we now \emph{postulate} the approximation $u_{\sigma} \approx \cT_{\iso(u_{\sigma})}u^*$.
Substituting this into \eqref{eq:piuito} and letting $\gamma_{\sigma}$ denote the resulting approximation to $\iso(u_{\sigma})$, we obtain the following SDE for $\gamma_{\sigma}$:
\begin{equation}
    \label{eq:sdegamma}
     \diff\gamma_{\sigma} = c \rd t 
     + \sigma^2\mathfrak{a}(\cT_{\gamma_{\sigma}} u^*) \rd t 
     + \sigma\sum_{k \in \bbZ}\mathfrak{b}_k(\cT_{\gamma_{\sigma}} u^*) \rd \beta_k(t).
\end{equation}
With \eqref{eq:sdegamma} we have achieved our goal of deriving an autonomous SDE for an approximation to $\iso(u_{\sigma})$.
Thus, if we can prove that the approximation $\gamma_{\sigma} \approx \iso(u_{\sigma})$ is accurate (to a degree which will be specified shortly), we have successfully reduced the dynamics of $u_{\sigma}$.
For this, the following regularity properties of the coefficients of \eqref{eq:piuito}-\eqref{eq:sdegamma} are needed.
\begin{proposition}
    \label{prop:frakabreg1}
    We have $\mathfrak{a} \in C^3(\Gamma_{\delta})$ and $\mathfrak{b} \in \ell^2(\bbZ;C^4(\Gamma_{\delta}))$.
\end{proposition}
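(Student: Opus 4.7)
The plan is to realize $\mathfrak{a}$ and each $\mathfrak{b}_k$ as compositions of finitely many smooth maps whose regularity is already supplied by \cref{prop:isochronreg,ass:noise}, and then to control the resulting series through the multiplier bound $\norm{h \cdot e_k}_\cX \lesssim \norm{h}_\cX \, |k|^\nu$ (which is exactly the estimate used in the solution-theory paragraph after \cref{ass:noise}) together with the noise regularity \eqref{eq:noisereg}.

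I would begin with the easier statement for $\mathfrak{b}_k$. Since $\iso \in C^5(\Gamma_\delta)$ by \cref{prop:isochronreg}, the map $u \mapsto \iso'(u) \in \cX^*$ is $C^4$. By \cref{ass:noise}, $g \colon \cX \to \cX$ is $C^4$, and multiplication by $e_k$ is a bounded linear operator on $\cX$ whose operator norm is $\lesssim |k|^\nu$. Composing with the continuous bilinear evaluation $(\xi,v)\mapsto \xi[v]$ on $\cX^*\times\cX$, every derivative of $\mathfrak{b}_k$ of order $\leq 4$ is bounded on the bounded set $\Gamma_\delta$, and Leibniz-expanding shows that each derivative carries exactly one multiplier factor $e_k$. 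This gives
\begin{equation*}
    \norm{\mathfrak{b}_k}_{C^4(\Gamma_\delta)} \lesssim |\alpha_k| \, |k|^\nu,
\end{equation*}
and squaring and summing yields $\mathfrak{b} \in \ell^2(\bbZ;C^4(\Gamma_\delta))$ by \eqref{eq:noisereg}.

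For $\mathfrak{a}$ I would split $\mathfrak{a} = \tfrac12 \sum_k \alpha_k^2 (A_k + B_k)$ with
\begin{equation*}
    A_k(u) = \iso'(u)\bigl[g'(u)g(u)\,e_k^2\bigr], \qquad B_k(u) = \iso''(u)\bigl[g(u)e_k,\,g(u)e_k\bigr],
\end{equation*}
and treat each piece separately. For $A_k$, the Nemitskii map $u\mapsto g'(u)g(u)$ is $C^3$ (since $g$ is $C^4$ and $\cX$ is a Banach algebra), the pointwise multiplier $e_k^2$ contributes a factor of at most $|k|^{2\nu}$ by iterating the multiplier estimate, and $\iso'$ is $C^4$; composing yields $A_k\in C^3(\Gamma_\delta)$ with $\norm{A_k}_{C^3}\lesssim |k|^{2\nu}$. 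For $B_k$, the map $u\mapsto g(u)e_k$ is $C^4$ with bound $\lesssim |k|^\nu$, and $\iso''$ is $C^3$; the continuous trilinear evaluation then gives $B_k\in C^3(\Gamma_\delta)$ with $\norm{B_k}_{C^3}\lesssim |k|^{2\nu}$. Hence $\sum_k \alpha_k^2 \norm{A_k+B_k}_{C^3} \lesssim \sum_k \alpha_k^2 |k|^{2\nu}<\infty$, so the series defining $\mathfrak{a}$ converges absolutely in $C^3(\Gamma_\delta)$ and the limit lies in $C^3(\Gamma_\delta)$.

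The only mild technical point is verifying that the multiplier factor $|k|^\nu$ propagates cleanly to every derivative in $u$, but because multiplication by $e_k$ is linear and commutes with $u$-differentiation, the Leibniz rule never produces additional $e_k$-factors beyond those already present in the undifferentiated expression. No ingredient beyond \cref{ass:noise,prop:isochronreg} and the multiplier estimate is needed, so I do not expect any genuine obstacle.
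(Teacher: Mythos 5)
Your proof follows the same strategy as the paper's: prove (or invoke) a pointwise multiplier bound for $e_k$ on $\cX$, propagate the $|k|^\nu$ factor through all derivatives by the chain/Leibniz rule using \cref{prop:isochronreg} and the smoothness of $g$, and close the series with \eqref{eq:noisereg}. The one place where you skate past a real step is the multiplier estimate $\norm{fe_k}_\cX \lesssim |k|^\nu \norm{f}_\cX$. You attribute it to the solution-theory paragraph after \cref{ass:noise}, but that paragraph's one-line justification (``$\norm{e_k}_\cX \lesssim |k|^\nu$'' plus the Banach algebra property) is not literally valid: $e_k$ is a periodic function on $\bbR$, hence not an element of $\cX = H^{s,p}(\bbR;\bbR^n)$, so $\norm{e_k}_\cX$ is infinite. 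The clean statement is about the operator norm of multiplication by $e_k$, and the paper actually proves it for the first time inside the proof of this very proposition, by first checking the $k=1$ case for integer-order Sobolev spaces $W^{n,p}$ by elementary calculus, then passing to $H^{s,p}$ by complex interpolation, and finally getting general $k$ by rescaling in $x$. So roughly half the paper's proof is devoted to this lemma, which your version outsources; with that estimate in hand, the rest of your argument (including the observation that differentiating in $u$ never produces extra $e_k$ factors because multiplication by $e_k$ is $u$-independent) matches the paper's.
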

\begin{proof}
    Recalling the definitions of $e_k$ \eqref{eq:basis} and $\nu$ (\cref{ass:noise}), 
    we first show that we have the estimate
    \begin{equation}
        \label{eq:mult}
        \norm{f e_k}_{\cX} \lesssim \abs{k}^{\nu} \norm{f}_{\cX}, \qquad f \in \cX,\, k \in \bbZ.
    \end{equation}
    If $k=1$ and $\cX$ is replaced by $W^{n,p}$ with $n \in \bbN, p \in [1,\infty]$, then \eqref{eq:mult} holds by basic calculus.
    By complex interpolation, \eqref{eq:mult} then also holds with $k=1$ and with $\cX$ as in \cref{ass:analytic}.
    The case $k \in \bbZ$ follows by rescaling space, since $\norm{f(\lambda\,\cdot)}_\cX \lesssim (1+\lambda^{\nu}) \norm{f}_\cX$.
    
    Using \eqref{eq:mult}, it follows from \cref{prop:isochronreg} and the chain rule that we have
    \begin{align*}
        \norm{u \mapsto \iso'(u)[g'(u)g(u)e_k^2] + \iso''(u)[g(u)e_k,g(u)e_k]}_{C^3(\Gamma_{\delta})} &\lesssim \abs{k}^{2\nu}, \\
        \norm{u \mapsto \iso'(u)[g(u)e_k]}_{C^4(\Gamma_{\delta})} &\lesssim \abs{k}^{\nu},
    \end{align*}
    for every $k \in \bbZ$.
    The regularity of $\mathfrak{a}$ and $\mathfrak{b}$ then follows from \eqref{eq:noisereg} and \eqref{eq:deffrakab1}.
\end{proof}
We now refer ahead to \cref{prop:frakabreg2} to see that \cref{prop:frakabreg1} also guarantees that the coefficients of \eqref{eq:sdegamma} are smooth.
Hence, for $x \in \bbR$ we will write $\gamma^x_{\sigma}$ for the unique solution to \eqref{eq:sdegamma} with initial condition $\gamma^x_{\sigma}(0) = x$.

\subsection{Validity of the approximation}
We will now confirm the validity of the approximation $u_{\sigma} \approx \cT_{\gamma_{\sigma}}u^*$, where $\gamma_{\sigma}$ is defined via \eqref{eq:sdegamma}.
Since our ultimate goal is to transfer the synchronization behavior of $\gamma_{\sigma}$ back to $u_{\sigma}$, the approximation needs to be valid for longer than the time it takes $\gamma_{\sigma}$ to synchronize.
As the characteristic time scale of \eqref{eq:sdegamma} is $t \sim \sigma^{-2}$ we see that if there is any hope to succeed, the approximation $\gamma_{\sigma} \approx \iso(u_{\sigma})$ must hold \emph{at least} until $t \sim \sigma^{-2}$ and preferably longer.
We now show that this is indeed the case, and the validity holds on the (slightly) longer time scale $t \sim \sigma^{-2}\log(\sigma^{-1})$.

Before we proceed to the mathematical statement, let us remark on the initial conditions of \eqref{eq:spde} and \eqref{eq:sdegamma}.
Naturally, the initial condition for $\gamma_{\sigma}$ needs to be compatible with the initial condition for $u_{\sigma}$.
Recalling that $u^x_{\sigma}(0) = \cT_x u^*$ by definition and $\iso(\cT_x u^*) = x$ by \cref{thm:isochron}, we should of course use $\gamma^x_{\sigma}$ to approximate $\iso(u^x_{\sigma})$.

\begin{theorem}
    \label{thm:longtermstab}
    Let $q \in (0,2/3)$ and $x \in \bbR$.
    Then we have
    \begin{equation}
        \label{eq:longtermstab}
        \PP[\Big]{\sup_{t \in [0,\exp(\sigma^{-2+3q})]} \norm{u^x_{\sigma}(t) - \cT_{\iso(u^x_{\sigma}(t))}u^*}_\cX \geq \sigma^{q}} \leq \exp(-\sigma^{-2+3q})
    \end{equation}
    for all $\sigma \ll_q 1$.
\end{theorem}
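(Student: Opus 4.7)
The plan is to run a bootstrap argument on the transverse perturbation $v(t) \coloneq u^x_\sigma(t) - \cT_{\gamma(t)}u^*$, where $\gamma(t) \coloneq \iso(u^x_\sigma(t))$ is the isochronal phase. Since $v(0) = 0$ by the choice of initial condition $u^x_\sigma(0) = \cT_x u^*$, controlling the supremum in \eqref{eq:longtermstab} reduces to controlling $\sup_{t \leq T_\sigma}\norm{v(t)}_\cX$, where $T_\sigma \coloneq \exp(\sigma^{-2+3q})$. First I would derive a mild-form stochastic equation for $v$: applying It\^o's formula to $\cT_{\gamma(t)} u^*$ via the phase SDE \eqref{eq:piuito}, subtracting from the It\^o form of \eqref{eq:spde}, and using both $A u^* + f(u^*) = -c\partial_x u^*$ and the Taylor expansion $f(\cT_\gamma u^* + v) = f(\cT_\gamma u^*) + f'(\cT_\gamma u^*)v + O(\norm{v}_\cX^2)$, one obtains a mild equation of the schematic form
\begin{equation*}
    v(t) = \int_0^t \widetilde P_\gamma(t-s)\bra[\big]{N(v,\gamma) + \sigma^2 R(v,\gamma)}(s)\rd s + \sigma M(t),
\end{equation*}
where $\widetilde P_\gamma$ is a translated analogue of the linear semigroup $P$ of $\cL$, $\norm{N(v,\gamma)}_\cX \lesssim \norm{v}_\cX^2$, $R$ is uniformly bounded, and $M(t)$ is a stochastic convolution combining the noise in \eqref{eq:spde} with the martingale drift of $\cT_{\gamma(t)}u^*$ read off from the $\mathfrak{b}_k$-terms. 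Crucially, the identity $\iso(\cT_\gamma u^* + v) = \gamma$, Taylor-expanded via \cref{lem:dpiexplicit} and \eqref{eq:dpiustartrans}, yields $\langle \cT_\gamma\psi, v\rangle = O(\norm{v}_\cX^2)$, so $v$ essentially lies in the $\gamma$-translated stable subspace and the spectral gap $\norm{P(t)\Pi^s f}_\cX \lesssim e^{-at}\norm{f}_\cX$ from \cref{ass:pulse} applies to $v$ modulo quadratic corrections absorbed into $N$.

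Next, I would set up a bootstrap stopping time $\tau \coloneq \inf\cur{t \geq 0 : \norm{v(t)}_\cX \geq \sigma^q} \wedge T_\sigma$. On $[0,\tau]$, the bootstrap bound $\norm{v}_\cX \leq \sigma^q$ substituted into the Duhamel formula, combined with the exponential decay above, gives
\begin{equation*}
    \sup_{t \in [0,\tau]}\norm{v(t)}_\cX \lesssim \frac{\sigma^{2q} + \sigma^2}{a} + \sigma\sup_{t \in [0,\tau]}\norm{M(t)}_\cX.
\end{equation*}
Since $q < 1$ and $\sigma \ll_q 1$, the deterministic contribution $\sigma^{2q}/a$ is much less than $\sigma^q/2$, so the bootstrap closes as soon as $\sigma\sup_{t \leq T_\sigma}\norm{M(t)}_\cX \leq \sigma^q/4$. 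Consequently the event $\cur{\tau < T_\sigma}$ is contained in $\cur{\sigma\sup_{t \leq T_\sigma}\norm{M(t)}_\cX > \sigma^q/4}$.

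To control the latter, I would invoke the Gaussian exponential tail inequality for stochastic convolutions in $2$-smooth Banach spaces due to \textcite{seidler_exponential_2010} (applicable by \cref{rem:analytic}). Partitioning $[0,T_\sigma]$ into unit intervals and using that the noise coefficients are uniformly bounded on $\cur{t \leq \tau}$ by \cref{prop:frakabreg1} together with \cref{rem:translateustar}, a standard factorization argument yields
\begin{equation*}
    \PP\bra[\big]{\sup_{s \in [k,k+1]}\norm{M(s)}_\cX \geq r} \leq C\exp(-cr^2)
\end{equation*}
for every $k \in \bbN_0$ and $r > 0$. Taking $r = \sigma^{q-1}/4$, the bound becomes $C\exp(-c\sigma^{2q-2})$, and since $\sigma^{2q-2}/\sigma^{-2+3q} = \sigma^{-q} \to \infty$, for $\sigma \ll_q 1$ this is dominated by $\exp(-2\sigma^{-2+3q})$. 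A union bound over the $\lceil T_\sigma\rceil$ intervals then delivers the probability bound $\exp(-\sigma^{-2+3q})$ in \eqref{eq:longtermstab}.

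The main obstacle is the first step: carefully deriving the mild equation for $v$ around the moving phase $\gamma(t)$, handling the $\gamma$-dependent translation of the linear operator $\cL$, and verifying that the $\gamma$-dependence of the noise coefficients in $M$ does not spoil the uniformity of the Gaussian tail over the random trajectory of $\gamma$. Once this is done, the exponential tail argument is essentially decoupled from the dynamics, and the long time scale $T_\sigma$ is reached by paying only a logarithmic price per unit interval.
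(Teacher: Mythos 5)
Your proposal is a valid from-scratch sketch of a long-time stability argument, but it takes a genuinely different route than the paper, which here is extremely short: it simply invokes \cite[Theorem~4.9]{vanwinden_noncommutative_2024} (a long-time stability estimate of the form $\bbP[u^x_\sigma(t)\in\Gamma_{C^{-1}\sigma^q}\ \text{for all}\ t\le T]\ge 1-\exp(-\sigma^{-2+3q})$ after scaling constants) and then converts from the tubular neighbourhood $\Gamma_{C^{-1}\sigma^q}$ to the isochron-shifted profile via the accuracy estimate \eqref{eq:isochronacc}, i.e.\ $\norm{v-\cT_{\iso(v)}u^*}_\cX \lesssim \delta'$ for $v\in\Gamma_{\delta'}$. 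The cited theorem carries the entire bootstrap and exponential-tail machinery, and it does so for a different phase decomposition (the ``stochastic phase'' of \cite{vanwinden_noncommutative_2024}, which is built so that the mild equation for the remainder has a \emph{deterministic, $\gamma$-independent} linearization, sidestepping the random-semigroup issue you flag). The key structural insight you miss is precisely this decoupling: because all reasonable phase decompositions agree to $O(\delta')$ by \eqref{eq:isochronacc}, one may prove the long-time tube estimate using whichever phase is technically cleanest and only \emph{afterwards} transfer to the isochronal phase, rather than working with the isochronal phase from the outset.

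Your route is not wrong, but it is harder than necessary and incomplete in the places you yourself identify. Two concrete concerns: (i) by centering the isochronal phase, your phase SDE is \eqref{eq:piuito}, whose drift $\sigma^2\mathfrak{a}(u_\sigma)$ involves $\iso''$, and the noise coefficients $\mathfrak{b}_k(u_\sigma)$ involve $\iso'$; all of these depend on the full solution $u_\sigma$, not just $\gamma$, so the ``remainder'' equation for $v$ inherits state-dependent curvature terms beyond the Taylor remainder of $f$. Showing these are absorbed into a term satisfying $\norm{N}_\cX\lesssim\norm{v}_\cX^2$ requires the quantitative isochron regularity of \cref{prop:isochronreg} together with \cref{lem:piderivsym} and is not automatic. (ii) The stochastic convolution $M(t)$ involves a semigroup translated by the random, non-adapted path $t\mapsto\gamma(t)$ (non-adapted because $\iso$ looks into the future of the deterministic flow); making Seidler's tail estimate uniform over this random kernel is a genuine obstacle, not a routine verification, and is exactly the kind of difficulty the stochastic phase of \cite{vanwinden_noncommutative_2024} was designed to avoid. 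Your quantitative bookkeeping (the union bound over $\lceil T_\sigma\rceil$ unit intervals, the comparison of $\sigma^{-2+2q}$ against $\sigma^{-2+3q}$) is correct, and you correctly observe that only $q>0$ is needed there — the restriction $q<2/3$ in the statement is a non-triviality condition ensuring $\exp(\sigma^{-2+3q})\to\infty$ and $\exp(-\sigma^{-2+3q})\to 0$.
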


\begin{theorem}
    \label{thm:longtermapprox}
    Let $q \in (0,2/9)$ and $x \in \bbR$. Then we have
    \begin{equation}
        \label{eq:longtermapprox}
        \PP[\Big]{\sup_{t \in [0,\sigma^{-2}\log(\sigma^{-1})^{1-q}]} \abs{\iso(u^x_{\sigma}(t)) - \gamma^x_{\sigma}(t)} \geq \sigma^q} \leq \sigma^{q}
    \end{equation}
    for all $\sigma \ll_q 1$.
\end{theorem}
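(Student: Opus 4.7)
The plan is to derive an SDE for the difference $\Delta_\sigma(t) := \iso(u^x_\sigma(t)) - \gamma^x_\sigma(t)$, split its coefficients so that the deterministic error from \cref{thm:longtermstab} is separated from the self-coupling in $\Delta_\sigma$, and then close a Gronwall-and-martingale argument on a stopping-time interval.

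First I would invoke \cref{thm:longtermstab} with an auxiliary exponent $q_0 \in (q,2/3)$ (for instance $q_0 = 3q$, which lies in this window precisely because $q < 2/9$), producing an event $E_\sigma$ with $\bbP[E_\sigma^c] \leq \exp(-\sigma^{-2+3q_0})$ on which $\sup_{t \leq T_1}\norm{u^x_\sigma(t) - \cT_{\iso(u^x_\sigma(t))}u^*}_\cX \leq \sigma^{q_0}$ for $T_1 := \exp(\sigma^{-2+3q_0}) \gg T := \sigma^{-2}\log(\sigma^{-1})^{1-q}$. Subtracting \eqref{eq:sdegamma} from \eqref{eq:piuito} on $E_\sigma$ then yields, starting from $\Delta_\sigma(0) = 0$,
\begin{equation*}
    \diff \Delta_\sigma = \sigma^2 [\mathfrak{a}(u^x_\sigma) - \mathfrak{a}(\cT_{\gamma^x_\sigma}u^*)]\rd t + \sigma \sum_{k \in \bbZ}[\mathfrak{b}_k(u^x_\sigma) - \mathfrak{b}_k(\cT_{\gamma^x_\sigma}u^*)]\rd \beta_k(t).
\end{equation*}

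The next step is to decompose each bracket by interposing $\cT_{\iso(u^x_\sigma)}u^*$. Using the $C^1$-regularity of $\mathfrak{a}$ and the $\ell^2(\bbZ;C^1)$-regularity of $(\mathfrak{b}_k)_k$ from \cref{prop:frakabreg1}, together with \cref{rem:translateustar}, the contribution of $u^x_\sigma - \cT_{\iso(u^x_\sigma)}u^*$ is bounded by $\sigma^{q_0}$ on $E_\sigma$, while the contribution of $\iso(u^x_\sigma) - \gamma^x_\sigma$ is bounded by $|\Delta_\sigma|$, so that
\begin{equation*}
    |\mathfrak{a}(u^x_\sigma) - \mathfrak{a}(\cT_{\gamma^x_\sigma}u^*)| \lesssim \sigma^{q_0} + |\Delta_\sigma|, \quad \sum_{k \in \bbZ}|\mathfrak{b}_k(u^x_\sigma) - \mathfrak{b}_k(\cT_{\gamma^x_\sigma}u^*)|^2 \lesssim \sigma^{2q_0} + |\Delta_\sigma|^2.
\end{equation*}
I would then introduce the stopping time $\tau := \inf\{t \geq 0 : |\Delta_\sigma(t)| \geq \sigma^q\} \wedge T$ and apply It\^o's formula to $|\Delta_\sigma|^{2k}$ for a sufficiently large even $k$ (permitted by the four derivatives afforded by \cref{prop:frakabreg1}). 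This produces a differential inequality of the shape $\diff|\Delta_\sigma|^{2k} \leq C\sigma^2(\sigma^{2k q_0} + |\Delta_\sigma|^{2k})\rd t + \diff N_t$ with $N_t$ a local martingale whose quadratic variation admits the same type of control, and can be closed by pathwise Gronwall combined with the Burkholder--Davis--Gundy inequality to give a moment bound $\EE{\sup_{t \leq T \wedge \tau}|\Delta_\sigma|^{2k}} \lesssim \sigma^{2kq_0 - o(1)}$. Markov's inequality then forces $\bbP[\{\tau < T\} \cap E_\sigma] \lesssim \sigma^{2k(q_0 - q) - o(1)}$, which is $\leq \sigma^q$ once $k$ is large enough and $q_0 > q$; adding the super-polynomially small $\bbP[E_\sigma^c]$ completes the argument.

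The hard part will be this final moment-Gronwall step: since $\sigma^2 T = \log(\sigma^{-1})^{1-q}$ diverges, the pathwise Gronwall factor $\exp(C\sigma^2 T) = \sigma^{-o(1)}$ is only sub-polynomial rather than bounded, and it must be carefully balanced against both the persistent error $\sigma^{q_0}$ supplied by \cref{thm:longtermstab} and the typical noise accumulation $\sigma\sqrt{T} = \log(\sigma^{-1})^{(1-q)/2}$, which itself fails to be $o(1)$. The restriction $q \in (0, 2/9)$ is what guarantees that the window $(q, 2/3)$ available for $q_0$ is wide enough to absorb all polylogarithmic and Gronwall losses while keeping $q_0$ inside the regime in which \cref{thm:longtermstab} delivers a nontrivial error exponent.
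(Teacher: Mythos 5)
Your plan matches the paper's proof in all essential respects: both control the coefficient mismatch by decomposing through $\cT_{\iso(u)}u^*$, with the $\cX$-norm error $\sigma^{3q}$ supplied by \cref{thm:longtermstab} (the paper also takes $q_0=3q$, whence the constraint $q<2/9$), both close with a Gr\"onwall estimate on moments of $\Delta_\sigma$, and both finish via Markov's inequality plus a union bound with the stability estimate. The one place where you have overestimated the difficulty is the moment exponent: the paper works with the \emph{second} moment ($k=1$), and this already suffices. With $T=\sigma^{-2}\log(\sigma^{-1})^{1-q}$ one has $e^{C\sigma^2T}=e^{C\log(\sigma^{-1})^{1-q}}=\sigma^{-o(1)}$, and Gr\"onwall plus BDG give $\EE{\sup_{t\le T\wedge\tau}\abs{\Delta_\sigma}^2}\lesssim T\sigma^{2+6q}e^{2C\sigma^2T}$, so Markov at the threshold $\lambda=\sqrt{T}\sigma^{1+2q}e^{C\sigma^2T}=\sigma^{2q}\cdot\mathrm{polylog}\cdot\sigma^{-o(1)}$ yields probability $\lesssim\sigma^{2q}$; both the threshold and the probability fall below $\sigma^q$ for $\sigma$ small because the polynomial margin $\sigma^{q}$ coming from $q_0-q=2q$ swallows every sub-polynomial loss. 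Pushing to $\abs{\Delta_\sigma}^{2k}$ is harmless in spirit but would force you to manage the quadratic variation $\sim\int\sigma^2\abs{\Delta}^{4k-2}(\sigma^{3q}+\abs{\Delta})^2\,\diff s$, which is not ``of the same type'' as the drift term and needs additional Young-type interpolation to close; the paper simply absorbs the BDG term into the left-hand side at second moment using $2xy\le\eps x^2+\eps^{-1}y^2$. A minor bookkeeping difference: rather than your separate event $E_\sigma$ and stopping time on $\abs{\Delta_\sigma}$, the paper uses a single stopping time $\tau_\sigma$ defined by $\norm{u(t)-\cT_{\iso(u(t))}u^*}_\cX\le\sigma^{3q}$, which simultaneously keeps $u(t)\in\Gamma_\delta$ and quantifies the persistent error; \cref{thm:longtermstab} is then invoked only at the very end via a union bound.
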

Recall that the notation $\sigma \ll_q 1$ means that the relevant estimates hold whenever $\sigma \leq c$ for some (small) constant $c$ which depends on $q$ and the objects from \cref{ass:analytic,ass:pulse,ass:noise}.
\begin{remark}
    As $q$ increases, the estimates \eqref{eq:longtermstab}-\eqref{eq:longtermapprox} become increasingly suboptimal.
    However, since our aim with \cref{thm:longtermstab,thm:longtermapprox} is to obtain approximations which are valid on the \emph{longest} possible time scale, we intend for $q$ to be small.
    More flexible versions of the estimates can be formulated, but this complicates the presentation and does not lead to any improvement of \cref{thm:spdesync}.
\end{remark}

\begin{remark}
    Comparing \eqref{eq:longtermstab} with \eqref{eq:longtermapprox}, we see that the approximation $u_{\sigma}(t) \approx \cT_{\iso(u_{\sigma}(t))}u^*$ holds on a much longer time scale than the approximation $\iso(u_{\sigma}(t)) \approx \gamma_{\sigma}(t)$.
    Thus, the reduced SDE \eqref{eq:sdegamma} is only accurate for a fraction of the typical lifetime of the pulse.
\end{remark}

\begin{proof}[Proof of \cref{thm:longtermstab}]
    Let $C$ be the constant from \eqref{eq:isochronacc}.
    For $\sigma > 0$, choose $\eps = C^{-1}\sigma^{q}$ and $T = \exp(\sigma^{-2+3q})$ in \cite[Theorem 4.9]{vanwinden_noncommutative_2024}.
    After scaling away the constants from the theorem against appropriate powers of $\sigma^q$, we see that
    \begin{equation*}
        \PP[\Big]{u^x_{\sigma}(t) \in \Gamma_{C^{-1}\sigma^q} \text{ for all } t \in [0,\exp(\sigma^{-2+3q})]} \geq 1-\exp(-\sigma^{-2+3q}),
    \end{equation*}
    for $\sigma \ll_q 1$.
    The desired estimate \eqref{eq:longtermstab} then follows using \eqref{eq:isochronacc}.
\end{proof}

\begin{proof}[Proof of \cref{thm:longtermapprox}]
    To avoid cluttering the notation we write $u$ and $\gamma$ instead of $u^x_{\sigma}$ and $\gamma^x_{\sigma}$ throughout the proof.
    For $\sigma > 0$ we define the stopping time
    \begin{equation}
        \label{eq:approxtausigma}
        \tau_{\sigma} \coloneq \sup \cur[\big]{t \in [0,T] : \norm{u(t) -\cT_{\iso(u(t))}u^*}_\cX \leq \sigma^{3q}}.
    \end{equation}
    When $\sigma \ll_q 1$ it then holds that $u(t) \in \Gamma_{\delta}$ for all $t \in [0,\tau_{\sigma}]$ ($\delta$ is as in \cref{thm:isochron}).
    Thus, from \cref{prop:frakabreg1} we see that for $t \in [0,\tau_{\sigma}]$ we have
    \begin{align}
        &\abs{\mathfrak{a}(u(t)) - \mathfrak{a}(\cT_{\gamma(t)}u^*)} 
        + \norm{\mathfrak{b}(u(t)) - \mathfrak{b}(\cT_{\gamma(t)}u^*)}_{\ell^2(\bbZ)} \nonumber\\
        &\qquad \lesssim \norm{u(t) - \cT_{\gamma(t)} u^*}_{\cX} \nonumber\\
        &\qquad \leq \norm{u(t) - \cT_{\iso(u(t))}u^*}_{\cX} + \norm{\cT_{\iso(u(t))}u^* - \cT_{\gamma(t)} u^*}_{\cX} \nonumber\\
        \label{eq:frakablip}
        &\qquad \lesssim \sigma^{3q} + \abs{\iso(u(t)) - \gamma(t)},
    \end{align}
    where we have used \eqref{eq:isochronacc} and \cref{rem:translateustar} for the final step.
    Writing $X(t) = \iso(u(t)) - \gamma(t)$, we see from \eqref{eq:piuito}, \eqref{eq:sdegamma}, and It\^o's formula that 
    \begin{align*}
        \diff \abs{X(t)}^2 &= 2\sigma^2 X(t)(\mathfrak{a}(u(t)) - \mathfrak{a}(\cT_{\gamma(t)}u^*))\rd t  \\
        &\quad+ \sigma^2 \norm{\mathfrak{b}(u(t)) - \mathfrak{b}(\cT_{\gamma(t)}u^*)}_{\ell^2(\bbZ)}^2 \rd t \\
        &\quad+ 2\sigma X(t)\sum_{k \in \bbZ}(\mathfrak{b}_k(u(t)) - \mathfrak{b}_k(\cT_{\gamma(t)}u^*))\rd \beta_k(t).
    \end{align*}
    Writing $X^*(t) = \sup_{s \in [0,t]} \ind_{[0,\tau_{\sigma}]}(s) \abs{X(s)}$, we thus obtain from \eqref{eq:frakablip} and the Burkholder--Davis--Gundy inequality (note that $X^*(0) = 0$ by our choice of initial condition):
    \begin{align*}
        \EE{X^*(t)^2} 
        &\lesssim \EE[\Big]{\sigma^2 \int_0^{\tau_{\sigma}} \abs{X(s)}(\sigma^{3q} + \abs{X(s)}) + (\sigma^{3q} + \abs{X(s)})^2 \rd s} \\
        &\quad+ \EE[\Big]{\bra[\Big]{\int_0^{\tau_{\sigma}} \sigma^2 \abs{X(s)}^2(\sigma^{3q} + \abs{X(s)})^2\rd s}^{1/2}} \\
        &\lesssim t\sigma^{2+6q} + \sigma^2\int_0^t \EE{ X^*(s)^2} \rd s
        +\EE[\big]{X^*(t) \bra[\big]{\sigma^2\int_0^t \sigma^{6q} + X^*(s)^2 \rd s}^{1/2}}
        \\
        &\lesssim (1+\eps^{-1})\bra[\big]{t\sigma^{2+6q} + \sigma^2 \int_0^t \EE{X^*(s)^2}\rd s} + \eps\EE{X^*(t)^2},
    \end{align*}
    where we have used the inequality $2 xy \leq \eps x^2 + \eps^{-1}y^2$ for the final step.
    Choosing $\eps$ sufficiently small (independent of $\sigma$) allows us to absorb the rightmost term into the left-hand side, after which we apply Gr\"onwall's inequality  to find
    \begin{equation*}
        \EE{X^*(t)^2 } \lesssim t \sigma^{2+6q}e^{2C\sigma^2 t}, \qquad t \geq 0,
    \end{equation*}
    for some absolute constant $C>0$.
    It then follows from Markov's inequality and the definition of $X^*$ that
    \begin{equation*}
        \PP[\Big]{\sup_{t \in [0,T \wedge \tau_{\sigma}]} \abs{\iso(u(t)) - \gamma(t)} \geq \sqrt{T}\sigma^{1+2q}e^{C\sigma^2 T}} \lesssim \sigma^{2q} \leq \tfrac{1}{2}\sigma^{q}
    \end{equation*}
    for any $T > 0$ and $\sigma \ll_q 1$.
    With the choice $T = \sigma^{-2}\log(\sigma^{-1})^{1-q}$ we also have
    \begin{equation*}
        \sqrt{T}\sigma^{1+2q} e^{C\sigma^2 T} = \sigma^{2q}\log(\sigma^{-1})^{\tfrac{1-q}{2}}e^{C \log(\sigma^{-1})^{1-q}}
        \leq \sigma^q
    \end{equation*}
    for $\sigma \ll_q 1$, so that
    \begin{equation*}
        \PP[\Big]{\sup_{t \in [0,\sigma^{-2}\log(\sigma^{-1})^{1-q} \wedge \tau_{\sigma}]} \abs{\iso(u(t)) - \gamma(t)} \geq \sigma^q} \leq \tfrac{1}{2}\sigma^{q}.
    \end{equation*}
    Recalling \eqref{eq:approxtausigma} we can now combine with \eqref{eq:longtermstab} (note that $3q < 2/3$ by assumption) and a union bound to lift the restriction $t \leq \tau_{\sigma}$, and the result follows.
\end{proof}

\section{The reduced SDE}
\label{sec:reducedsdeanalysis}
With the validity of the approximation $u_{\sigma} \approx \cT_{\gamma_{\sigma}} u^*$ firmly established, we turn our attention towards analyzing the SDE which defines $\gamma_{\sigma}$ \eqref{eq:sdegamma}.
The goal of this section is to establish (weak) synchronization of $\gamma_{\sigma}$.
In order to transfer the synchronization to $u_{\sigma}$, the synchronization moreover needs to happen on the time scale $t \sim \sigma^{-2}$, and the rate needs to be uniform in the initial condition.

Although synchronization of one-dimensional SDEs has been extensively studied (see \cref{subsec:introsync}), we were unable to find a statement in the literature which exactly fits our setting.
Hence, our strategy is to verify the abstract criteria set forth in the work by \textcite{flandoli_synchronization_2017}.
This involves checking a mixing condition (\cref{subsec:sdeergodic}), an asymptotic stability condition (\cref{subsec:sdestable}), and some irreducibility conditions (\cref{subsec:sdecontrol}).
The most critical part of the argument takes place in \cref{subsec:sdesync}:
we use \cref{ass:scaling} to quantify the synchronization rate in terms of $\sigma$, and use uniform mixing properties of \eqref{eq:sdegamma} to show that the synchronization rate is uniform in the initial condition.

\subsection{Properties of the coefficients}
We start our analysis by suggestively defining $\mathfrak{a} \colon \bbR \to \bbR$ and $\mathfrak{b} \colon \bbR \times \bbZ \to \bbR$ via
\begin{subequations}
\label{eq:deffrakab2}
\begin{align}
    \label{eq:deffraka2}
    \mathfrak{a}(x) &\coloneq \mathfrak{a}(\cT_x u^*), & & x \in \bbR, \\
    \label{eq:deffrakb2}
    \mathfrak{b}_k(x) &\coloneq \mathfrak{b}_k(\cT_x u^*), & & x \in \bbR,\,k \in \bbZ,
\end{align}
\end{subequations}
where the right-hand side is interpreted according to \eqref{eq:deffrakab1}.
Despite our abuse of notation there should not be any confusion between \eqref{eq:deffrakab1} and \eqref{eq:deffrakab2}, as the latter definition will exclusively be used in the rest of this section.
With the new definition, we may write \eqref{eq:sdegamma} in the It\^o formulation as
\begin{equation}
    \label{eq:sdegammasimp}
    \diff \gamma_{\sigma} = c \rd t 
    + \sigma^2 \mathfrak{a}(\gamma_{\sigma}) \rd t 
    + \sigma \sum_{k \in \bbZ}\mathfrak{b}_k(\gamma_{\sigma}) \rd \beta_k(t),
\end{equation}
or in the equivalent Stratonovich formulation as
\begin{equation}
    \label{eq:sdegammasimpstrat}
     \diff \gamma_{\sigma} = c \rd t 
    + \sigma^2 \bra[\big]{\mathfrak{a}(\gamma_{\sigma}) - \tfrac{1}{2}\sum_{k \in \bbZ}\mathfrak{b}_k'(\gamma_{\sigma})\mathfrak{b}_k(\gamma_{\sigma})} \rd t 
    + \sigma \sum_{k \in \bbZ}\mathfrak{b}_k(\gamma_{\sigma}) \circ \diff \beta_k(t).   
\end{equation}
\begin{remark}
    \label{rem:wrongreduction}
    One might expect that the extra term in \eqref{eq:sdegammasimpstrat} will cancel with the first term of $\mathfrak{a}$ in \eqref{eq:deffraka1}, since both terms originate from an It\^o--Stratonovich correction.
    However, this is generally not the case.
    For example, when $W(t,x) = \beta_0(t)$ the former term vanishes (since $\mathfrak{b}_0' \equiv 0$ by \eqref{eq:frakbexplicit}) but the latter does not.
    This demonstrates the subtle point that naively performing a phase reduction in the Stratonovich formulation leads to an inaccurate approximation.
    In the physics literature, this has been observed in the context of phase reduction for nonlinear oscillators \cite{yoshimura_phase_2008,teramae_stochastic_2009}.
\end{remark}
\begin{proposition}
    \label{prop:frakabreg2}
    We have $\mathfrak{a} \in C^3(\bbR)$ and $\mathfrak{b} \in \ell^2(\bbZ;C^4(\bbR))$.
\end{proposition}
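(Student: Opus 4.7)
The plan is to reduce this to \cref{prop:frakabreg1} via the chain rule, exploiting that $x \mapsto \cT_x u^*$ is a smooth map from $\bbR$ into $\cX$ with translation-invariant derivative bounds.

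First, I would verify that $x \mapsto \cT_x u^* \in \cX$ defines a $C^4$ map from $\bbR$ to $\cX$. Since $\cX = H^{s,p}(\bbR;\bbR^n)$ with $p \in [2,\infty)$, the translation group is strongly continuous on $\cX$. By \cref{ass:pulse} we have $\partial_x^{(n)} u^* \in \cX$ for $n \in \{1,2,3,4\}$, so an induction on $n$ (using strong continuity of translations applied to each $\partial_x^{(n)} u^*$) shows that $x \mapsto \cT_x u^*$ is $n$-times continuously differentiable from $\bbR$ to $\cX$ with
\[
    \frac{\rd^n}{\rd x^n}\cT_x u^* = (-1)^n \cT_x \partial_x^{(n)} u^*,
\]
for $n \leq 4$. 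Note also that $\cT_x u^* \in \Gamma_{\delta}$ for every $x \in \bbR$ (take $y = x$ in the definition of $\Gamma_\delta$), so composition with $\mathfrak{a}$ and $\mathfrak{b}_k$ from \eqref{eq:deffrakab1} is legitimate.

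Next, I would apply the chain rule (Faà di Bruno in the Fréchet setting) to the composition $x \mapsto \mathfrak{a}(\cT_x u^*)$. Combined with \cref{prop:frakabreg1}, which gives $\mathfrak{a} \in C^3(\Gamma_\delta)$, and the $C^4$-regularity of $x \mapsto \cT_x u^*$, this immediately yields $\mathfrak{a} \in C^3(\bbR)$. An identical argument, using $\mathfrak{b}_k \in C^4(\Gamma_\delta)$, gives $\mathfrak{b}_k \in C^4(\bbR)$ for each $k \in \bbZ$.

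To upgrade pointwise $C^4$-regularity of each $\mathfrak{b}_k$ to $\ell^2$-summability in the $C^4(\bbR)$-norm, I would observe that translation is an isometry on $\cX$, and hence
\[
    \bnorm{\cT_x \partial_x^{(n)} u^*}_\cX = \bnorm{\partial_x^{(n)} u^*}_\cX
\]
is independent of $x \in \bbR$ for $n = 0, 1, 2, 3, 4$. Writing out the chain-rule expression for the derivatives of $x \mapsto \mathfrak{b}_k(\cT_x u^*)$ up to order $4$, each term is controlled by the corresponding Fréchet derivative of $\mathfrak{b}_k$ at $\cT_x u^* \in \Gamma_\delta$ contracted against (a polynomial expression in) the derivatives of $x \mapsto \cT_x u^*$. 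This gives a bound
\[
    \norm{\mathfrak{b}_k}_{C^4(\bbR)} \leq C\, \norm{\mathfrak{b}_k}_{C^4(\Gamma_\delta)},
\]
with $C$ depending only on $\norm{\partial_x^{(n)} u^*}_\cX$ for $n \leq 4$, and not on $k$ or $x$. Squaring and summing over $k \in \bbZ$, the conclusion $\mathfrak{b} \in \ell^2(\bbZ;C^4(\bbR))$ then follows directly from $\mathfrak{b} \in \ell^2(\bbZ;C^4(\Gamma_\delta))$, which was established in \cref{prop:frakabreg1}.

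The main (and essentially only) subtle point is the $C^4$-regularity of $x \mapsto \cT_x u^*$ in the Bessel-space topology, since this is where one actually uses the assumption that four derivatives of $u^*$ lie in $\cX$; everything else is a routine chain-rule bookkeeping and the key insight that translations are isometries, so the chain-rule constants do not depend on $x$ and preserve the $\ell^2$ structure in $k$.
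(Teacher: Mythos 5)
Your proposal is correct and takes essentially the same route as the paper: the paper's (one-line) proof also reduces to \cref{prop:frakabreg1} via the chain rule, citing that $x \mapsto \cT_x u^*$ is four times differentiable with values in $\cX$ by \cref{ass:pulse}. Your additional explicit remark that translations are isometries on $\cX$, so the chain-rule constants are uniform in $x$ and $k$ and hence the $\ell^2(\bbZ)$ structure is preserved, is a point the paper leaves implicit but is exactly the right justification.
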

\begin{proof}
    This follows from the definition \eqref{eq:deffrakab2} using \cref{prop:frakabreg1} and the chain rule, taking into account that $x \mapsto \cT_x u^*$ is four times differentiable (with values in $\cX$) by \cref{ass:pulse}.
\end{proof}

We now gather some more facts about the coefficients aside from the smoothness.
We will not concern ourselves too much with the exact form of $\mathfrak{a}$, since it turns out to have no discernible effect on the synchronization properties of \eqref{eq:sdegammasimp}.
Instead, the synchronization is facilitated mainly through the multiplicative noise coefficients $\mathfrak{b}_k$, which can be made much more explicit using \cref{lem:dpiexplicit,lem:piderivsym}.
Indeed, using \eqref{eq:dpiustartrans} we find from \eqref{eq:deffrakb1} and \eqref{eq:deffrakb2} that
\begin{subequations}
\begin{equation}
    \label{eq:frakbexplicit}
    \mathfrak{b}_k(x) = \alpha_k \langle \psi g(u^*),\cT_{-x}e_k\rangle = \alpha_k \langle \cT_x[\psi g(u^*)], e_k \rangle, \qquad x \in \bbR,\, k \in \bbZ,
\end{equation}
where we recall that $\psi$ is as in \eqref{eq:Picpsi}.
Recalling the definition of $e_k$ \eqref{eq:basis} and using trigonometric addition formulas, we also find for $k \in \bbN$:
\begin{equation}
    \label{eq:frakbmatrix}
    \begin{pmatrix}
        \mathfrak{b}_k(x) \\
        \mathfrak{b}_{-k}(x)
    \end{pmatrix}
    =
    \begin{pmatrix}
        \alpha_k & 0 \\ 0 & \alpha_{-k}
    \end{pmatrix}
    \begin{pmatrix}
        \cos(2\pi kx) & -\sin(2\pi kx) \\
        \sin(2\pi kx) & \cos(2\pi kx)
    \end{pmatrix}
    \begin{pmatrix}
        c_k \\ c_{-k}
    \end{pmatrix},
\end{equation}
where we have written $c_k \coloneq \langle \psi g(u^*),e_k\rangle$.
This then leads to
\begin{equation}
    \label{eq:simplenoise}
    \sum_{k \in \bbZ} \mathfrak{b}_k(x) \rd \beta_k
    = 
    \alpha_0 \langle \psi g(u^*), e_0 \rangle \rd \beta_0 + 
    \sum_{k \in \bbN} 
    \begin{pmatrix}
        c_k \\
        c_{-k}
    \end{pmatrix}^{\top}
    R(-2\pi kx) 
    \begin{pmatrix}
        \alpha_k & 0 \\ 0 & \alpha_{-k}
    \end{pmatrix}
    \begin{pmatrix}
        \rd \beta_k \\ \rd \beta_{-k}
    \end{pmatrix},
\end{equation}
\end{subequations}
where $R(\theta)$ denotes the usual rotation matrix which rotates $\bbR^2$ counterclockwise by $\theta$ radians.
Notice that we have fully isolated the dependence on $x$ into the rotation matrices.
The nondegeneracy condition in \cref{ass:noise} then affords us the following lemma.
\begin{lemma}
    \label{lem:sinspan}
    We have $\mathfrak{b}_1^2(x) + \mathfrak{b}_{-1}^2(x) > 0$ for every $x \in \bbR$, and also
    \begin{equation}
        \label{eq:sinspan}
        \Span \cur{\mathfrak{b}_{\pm 1}(\cdot)} = \Span \cur{\cos(2\pi \cdot),\sin(2\pi \cdot)} = \cur{L \sin(2\pi\cdot + \eta) : L,\eta \in \bbR }.
    \end{equation}
\end{lemma}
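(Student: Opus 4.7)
The plan is to unpack \eqref{eq:frakbmatrix} at $k=1$ and use linear algebra together with the nondegeneracy assumption. Write $c_k \coloneq \langle \psi g(u^*), e_k \rangle$ as in the paragraph preceding the lemma. Since
\[
    \sin(2\pi y + x) = \cos(x)\sin(2\pi y) + \sin(x)\cos(2\pi y),
\]
the nondegeneracy condition \eqref{eq:noisenondegen} unpacks (via the definitions \eqref{eq:basis} of $e_{\pm 1}$) to the statement that $(c_1, c_{-1}) \neq (0,0)$. This is the content I would extract first, as it is the only thing I actually need from \eqref{eq:noisenondegen}, and it is much easier to manipulate than the original formulation.

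Next I would rewrite the specialization of \eqref{eq:frakbmatrix} at $k=1$ in the compact form
\[
    \begin{pmatrix} \mathfrak{b}_1(x) \\ \mathfrak{b}_{-1}(x) \end{pmatrix}
    = D\, R(2\pi x) \begin{pmatrix} c_1 \\ c_{-1} \end{pmatrix},
    \qquad D \coloneq \begin{pmatrix} \alpha_1 & 0 \\ 0 & \alpha_{-1} \end{pmatrix},
\]
where $R(\theta)$ is the (invertible) rotation matrix. For the first claim, if $\mathfrak{b}_1(x)^2 + \mathfrak{b}_{-1}(x)^2 = 0$ at some $x$, then since $\alpha_{\pm 1} \neq 0$ by \cref{ass:noise} the diagonal matrix $D$ is invertible, and likewise $R(2\pi x)$ is invertible, so we must have $(c_1, c_{-1}) = (0,0)$, contradicting the first step.

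For the second claim, each of $\mathfrak{b}_1$ and $\mathfrak{b}_{-1}$ is visibly a linear combination of $\cos(2\pi\cdot)$ and $\sin(2\pi\cdot)$, giving one inclusion. For the reverse inclusion it suffices to show linear independence of $\mathfrak{b}_1, \mathfrak{b}_{-1}$ in the two-dimensional space $\Span\{\cos(2\pi\cdot), \sin(2\pi\cdot)\}$; expanding, their coefficient matrix in this basis has determinant $\alpha_1 \alpha_{-1}(c_1^2 + c_{-1}^2)$, which is nonzero by the first step and \cref{ass:noise}. The final equality with $\{L\sin(2\pi\cdot + \eta) : L,\eta \in \bbR\}$ is the standard amplitude--phase identity $A\cos(2\pi y) + B\sin(2\pi y) = \sqrt{A^2+B^2}\sin(2\pi y + \eta)$.

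No step is really difficult; if anything deserves care it is the bookkeeping between the nondegeneracy hypothesis as phrased in \eqref{eq:noisenondegen} and the concrete statement $(c_1, c_{-1}) \neq 0$, since the normalizations in the basis \eqref{eq:basis} and the meaning of ``$\sin(2\pi\cdot + x)$'' (as a function of the integration variable, shifted by the fixed parameter $x$) both need to be tracked correctly.
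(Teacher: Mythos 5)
Your proof is correct and follows the same route as the paper's (much terser) argument: both extract $(c_1,c_{-1})\neq(0,0)$ from \eqref{eq:noisenondegen}, then read the claims off the factorization \eqref{eq:frakbmatrix} using invertibility of the diagonal and rotation factors. The only difference is that you spell out the determinant $\alpha_1\alpha_{-1}(c_1^2+c_{-1}^2)$ explicitly for the linear-independence step, whereas the paper simply says the remaining claims ``can be read off''.
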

\begin{proof}
    The second identity in \eqref{eq:sinspan} is well-known, and from this and \eqref{eq:noisenondegen} it follows that $c_1^2 + c_{-1}^2 > 0$.
    Since $\alpha_{\pm1 }$ are both nonzero by \cref{ass:noise}, the remaining claims can be read off from \eqref{eq:frakbmatrix}.
\end{proof}

We also observe the following symmetry properties of $\mathfrak{a}$ and $\mathfrak{b}$, which are inherited from symmetries of the PDE and the noise.
\begin{proposition}
    \label{prop:frakabsym}
    For every $x \in \bbR$ and $k \in \bbZ$, we have
    \begin{subequations}
    \label{eq:frakabsym}
    \begin{equation}
        \label{eq:frakabper}
        \mathfrak{a}(x) = \mathfrak{a}(x+1), \qquad \mathfrak{b}_k(x) = \mathfrak{b}_k(x+1).
    \end{equation}
    If additionally $\alpha_k = \alpha_{-k}$ for every $k \in \bbZ$, then we also have
    \begin{equation}
        \label{eq:frakabtiv}
        \mathfrak{a}(x) = \mathfrak{a}(0), \qquad
        \norm{\mathfrak{b}(x)}_{\ell^2(\bbZ)} = \norm{\mathfrak{b}(0)}_{\ell^2(\bbZ)}.
    \end{equation}
    \end{subequations}
\end{proposition}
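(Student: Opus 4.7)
The plan is to prove both statements by direct computation from the explicit formulas for the coefficients. For $\mathfrak{a}$ the preliminary step is to combine the pointwise Nemitskii identity $g'(\cT_x u^*) g(\cT_x u^*) = \cT_x(g'(u^*) g(u^*))$ with the isochron identities of \cref{lem:dpiexplicit,lem:piderivsym} to rewrite
\begin{equation*}
\mathfrak{a}(x) = \tfrac{1}{2}\sum_k \alpha_k^2 \bra[\big]{\langle \psi\, g'(u^*)\, g(u^*),\, \cT_{-x} e_k^2 \rangle + \iso''(u^*)[g(u^*) \cT_{-x} e_k,\, g(u^*) \cT_{-x} e_k]}.
\end{equation*}
In this form the only $x$-dependence sits inside the translated basis functions; the analogous formula for $\mathfrak{b}_k(x)$ is already provided by \eqref{eq:frakbexplicit}.

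The periodicity claim \eqref{eq:frakabper} then follows immediately, since each $e_k$ has period $1$ on $\bbR$ (as $k$ is an integer), so $\cT_{-(x+1)} e_k = \cT_{-x} e_k$ and likewise for $e_k^2$. Substituting $x+1$ into the display above and into \eqref{eq:frakbexplicit} leaves every summand unchanged.

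For the invariance claim \eqref{eq:frakabtiv}, my plan is to pair up the $k$ and $-k$ contributions in each sum and exploit the orthogonality of the sine--cosine pair at each frequency. A trigonometric addition computation shows that $(\cT_{-x} e_k,\, \cT_{-x} e_{-k})$ is obtained from $(e_k, e_{-k})$ by a rotation through angle $2\pi k x$. Under $\alpha_k = \alpha_{-k}$ both elements of the pair carry the same weight $\alpha_k^2$, and expanding bilinearly will cause the $x$-dependent cross terms to cancel via $\sin^2 + \cos^2 = 1$. Concretely, the first term in $\mathfrak{a}$ collapses via the pointwise identity $(\cT_{-x} e_k)^2 + (\cT_{-x} e_{-k})^2 \equiv 2$; the second term uses the trace-invariance identity
\begin{equation*}
\iso''(u^*)[g(u^*)\cT_{-x} e_k,\, g(u^*)\cT_{-x} e_k] + \iso''(u^*)[g(u^*)\cT_{-x} e_{-k},\, g(u^*)\cT_{-x} e_{-k}]
\end{equation*}
being independent of $x$, which follows from the symmetry of $\iso''(u^*)$; and the $\mathfrak{b}$-claim reads off \eqref{eq:frakbmatrix} directly as $\mathfrak{b}_k(x)^2 + \mathfrak{b}_{-k}(x)^2 = \alpha_k^2(c_k^2 + c_{-k}^2)$. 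The one point requiring brief justification is the regrouping of $\pm k$ terms and the interchange of the infinite sum with the bilinear form $\iso''(u^*)$ in the $\mathfrak{a}$-calculation; but this is not a serious obstacle, as absolute summability already follows from the estimates used in the proof of \cref{prop:frakabreg1} together with the boundedness of $\iso''(u^*)$ from \cref{prop:isochronreg}.
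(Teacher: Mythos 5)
Your proof is correct and follows essentially the same route as the paper: apply \cref{lem:piderivsym} (together with the translation-equivariance of the Nemitskii maps) to isolate the $x$-dependence inside $\cT_{-x}e_k$, read off periodicity, and then pair the $\pm k$ terms using $e_k^2+e_{-k}^2\equiv 2$ for the first part of $\mathfrak{a}$, trace-invariance of the symmetric bilinear form $\iso''(u^*)$ under the rotation $(\cT_{-x}e_k,\cT_{-x}e_{-k})=R(2\pi kx)(e_k,e_{-k})$ for the second part, and the rotation-matrix form \eqref{eq:frakbmatrix} for $\norm{\mathfrak{b}(x)}_{\ell^2}$. The only cosmetic difference is that you additionally invoke \cref{lem:dpiexplicit} to rewrite $\iso'(u^*)$ as a pairing with $\psi$, which the paper does not bother with here; this is valid but not needed.
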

\begin{proof}
    The identities involving $\mathfrak{b}$ can be read off directly from \eqref{eq:frakbexplicit}-\eqref{eq:frakbmatrix}.
    Regarding $\mathfrak{a}$, note that by \eqref{eq:deffraka1}-\eqref{eq:deffraka2} and \cref{lem:piderivsym} we have
    \begin{equation*}
        \mathfrak{a}(x) = \tfrac{1}{2}\sum_{k \in \bbZ}\alpha_k^2\bra[\big]{ \iso'(u^*)[g'(u^*)g(u^*) \cT_{-x} e_k^2]
        + \iso''(u^*)[g(u^*)\cT_{-x}e_k,g(u^*)\cT_{-x} e_k]}.
    \end{equation*}
    The periodicity of $\mathfrak{a}$ immediately follows, so it only remains to show the first identity of \eqref{eq:frakabtiv} in the case where $\alpha_k = \alpha_{-k}$ for all $k \in \bbN$.
    For this, it suffices to note that
    \begin{align*}
        &\sum_{k = \pm n}\alpha_k^2\bra[\big]{ \iso'(u^*)[g'(u^*)g(u^*) \cT_{-x} e_k^2]
        + \iso''(u^*)[g(u^*)\cT_{-x}e_k,g(u^*)\cT_{-x} e_k]} \\
        &\quad = \alpha_n^2 \sum_{k = \pm n}\bra[\big]{ \iso'(u^*)[g'(u^*)g(u^*) \cT_{-x} e_k^2]
        + \iso''(u^*)[g(u^*)\cT_{-x}e_k,g(u^*)\cT_{-x} e_k]} \\
        &\quad = \alpha_n^2 \bra[\big]{2\iso'(u^*)[g'(u^*)g(u^*)]
        + \sum_{k = \pm n}\iso''(u^*)[g(u^*)e_k,g(u^*)e_k]},
    \end{align*}
    for every $n \in \bbN$ and $x \in \bbR$, where the final identity follows (for the first term) since $\sin(x)^2 + \cos(x)^2 = 1$ and (for the second term) by trigonometric addition formulas, noting that $\iso''(u^*)$ is symmetric.
\end{proof}

\subsection{RDS generation and ergodicity}
\label{subsec:sdeergodic}
By the periodicity of $\mathfrak{a}$ and $\mathfrak{b}$ \eqref{eq:frakabper}, we can now interpret \eqref{eq:sdegammasimp} as an SDE on either $\bbT$ or $\bbR$.
Combined with smoothness of the coefficients, this implies that \eqref{eq:sdegammasimp} generates a $C^2$ white noise RDS on $\bbT$ as well as on $\bbR$.
Recall that the maps $(\theta_t)_{t \in \bbR}$ apply a time shift to $(\beta_k)_{k \in \bbZ}$ and were defined in \cref{subsec:probrds}.
\begin{proposition}[RDS generation]
    \label{prop:rds}
    Let $S$ be either $\bbR$ or $\bbT$.
    There exists a perfect cocycle $\phi_{\sigma} \colon \bbR^+ \times \Omega \times S \to S$ over $(\theta_t)_{t \in \bbR}$ which satisfies the following properties:
    \begin{enumerate}[label=(\roman*)]
        \item \label{it:rds:white}$\omega \mapsto \phi_{\sigma}(t,\theta_s \omega,x)$ is $\cF_{s,s+t}$-measurable for every $x \in S$, $s \in \bbR$, $t \in \bbR^+$.
        \item $t \mapsto \phi_{\sigma}(t,\omega,x) \eqcolon \gamma^x_{\sigma}(t)$ solves \eqref{eq:sdegammasimp} with $\gamma_{\sigma}^x(0) = x$, for every $x \in S$.
        \item $(t,x) \mapsto \phi(t,\omega,x)$ is jointly continuous for every $\omega \in \Omega$.
        \item \label{it:rds:c2} $x \mapsto \phi_{\sigma}(t,\omega,x) \in C^2(S;S)$ for every $t \in \bbR^+$, $\omega \in \Omega$.
        \item \label{it:rds:integrable} $\EE{\norm{x \mapsto \phi_{\sigma}(1,\omega,x)}_{C^2(S;S)}} < \infty$.
    \end{enumerate}
\end{proposition}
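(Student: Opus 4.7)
My plan is to construct the cocycle $\phi_\sigma$ by applying Kunita's theorem on stochastic flows of diffeomorphisms to \eqref{eq:sdegammasimp}, and then to apply the standard perfection procedure to upgrade the almost-sure cocycle so obtained to one valid for every $\omega$.

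The SDE \eqref{eq:sdegammasimp} is scalar with drift $c + \sigma^2\mathfrak{a}(\cdot)$ and countably many diffusion coefficients $\sigma\mathfrak{b}_k(\cdot)$ driven by the independent Brownian motions $\beta_k$. By \cref{prop:frakabreg2,prop:frakabsym}, the drift is periodic and $C^3$ with bounded derivatives, while the family $(\mathfrak{b}_k)_{k \in \bbZ}$ is periodic and lies in $\ell^2(\bbZ; C^4(\bbR))$, so the total diffusion is Hilbert--Schmidt valued and $C^4$ in $x$ with bounded derivatives. This places the SDE firmly in the scope of Kunita's theory for smooth stochastic flows.

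The classical existence theorem for smooth stochastic flows, adapted to countably many independent Brownian motions via the cylindrical Wiener process $W(t) = \sum_k \tilde e_k \beta_k(t)$ on $\ell^2(\bbZ)$, produces a random field $\tilde\phi_\sigma(t, \omega, x)$ defined for $\omega$ in a full-measure set $\Omega_0 \in \cF$, which solves the SDE for each initial condition $x$, is jointly continuous in $(t, x)$, and is $C^2$ in $x$. Standard moment bounds obtained from BDG plus Gr\"onwall applied to the variation equations for $\partial_x \tilde\phi_\sigma$ and $\partial_x^2 \tilde\phi_\sigma$ yield finite $p$-th moments of the corresponding sup-norms for every $p$, giving \ref{it:rds:integrable}. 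By uniqueness of solutions and periodicity of the coefficients, $\tilde\phi_\sigma(t, \omega, x+1) = \tilde\phi_\sigma(t, \omega, x) + 1$ on $\Omega_0$, so the flow descends to $\bbT$; the $C^2(\bbR;\bbR)$-norm in \ref{it:rds:integrable} then reduces to a bound on $\bbT$ for the first two derivatives together with a single one-point estimate on $\phi_\sigma$ itself.

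To upgrade $\tilde\phi_\sigma$ to a perfect cocycle I would apply the standard perfection procedure from the theory of random dynamical systems, which uses only joint continuity in $(t,x)$, the almost-sure cocycle identity inherited from uniqueness of solutions, and the independence-of-increments structure of $\bbF$, to produce a measurable modification $\phi_\sigma$ satisfying the cocycle identity for every $\omega \in \Omega$. Property \ref{it:rds:white} is built into the construction, since the Picard approximations used to solve the SDE over $[s, s+t]$ involve only Brownian increments on that interval, which are $\cF_{s,s+t}$-measurable. The main technical subtlety I expect is the perfection step in the presence of the infinite-dimensional noise, but once the noise is compressed into a cylindrical Wiener process on $\ell^2(\bbZ)$ this reduces to the same arguments used in the classical finite-dimensional setting.
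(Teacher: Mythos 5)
Your overall route matches the paper's: generate the flow and then perfect it. The paper collapses the two steps by citing a single result of Arnold--Scheutzow (Theorem 28 of the 1995 perfection paper) which produces a perfect $C^2$-cocycle directly from the smoothness of the coefficients, whereas you go via Kunita's flow theorem and a separate perfection argument. Both are valid, and the reduction to $\bbT$ by periodicity and the compression of the countable noise into a cylindrical Wiener process on $\ell^2(\bbZ)$ are handled in the same spirit in both accounts.

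Where your proposal has a genuine gap is in item (v). You assert that ``BDG plus Gr\"onwall applied to the variation equations for $\partial_x\tilde\phi_\sigma$ and $\partial_x^2\tilde\phi_\sigma$ yield finite $p$-th moments of the corresponding sup-norms.'' BDG and Gr\"onwall applied to the variation equation at a fixed initial point $x$ give you $\sup_x\EE{\abs{\partial_x^{(k)}\gamma^x_\sigma(1)}^p}<\infty$, i.e.\ moments that are uniform in $x$ --- but with the expectation inside, not outside, the supremum over $x$. Passing to $\EE{\sup_x\abs{\partial_x^{(k)}\gamma^x_\sigma(1)}}$ requires an additional argument: either a Kolmogorov-continuity estimate on the increments in $x$ of the derivatives (which is more work and more regularity bookkeeping), or the trick the paper actually uses, namely the Sobolev embedding $W^{3,1}(\bbT)\hookrightarrow C^2(\bbT)$ combined with Fubini, which converts the pointwise-in-$x$ moment bounds on $\partial_x^{(k)}\gamma^x_\sigma(1)$ for $k=0,\dots,3$ directly into a bound on $\EE{\norm{x\mapsto\gamma^x_\sigma(1)}_{C^2(\bbT)}}$. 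Note also that this forces you to control the \emph{third} $x$-derivative, not just the first two as your sketch suggests, which is precisely why \cref{prop:frakabreg2} was stated with $\mathfrak{a}\in C^3$ and $\mathfrak{b}\in\ell^2(\bbZ;C^4)$. So your plan for (i)--(iv) is sound and essentially the paper's, but (v) as you have written it does not close; you would need to add either the Kolmogorov or the Sobolev--Fubini step.
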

\begin{remark}
    We will not always explicitly specify whether we consider $\phi_{\sigma}$ as an RDS on $\bbT$ or $\bbR$.
    However, the notation $d(x,y)$ will \emph{always} refer to the distance on the torus.
    In the case where $x,y \in \bbR$ a priori, one should interpret $d(x,y)$ as the distance between the equivalence classes (modulo $2\pi$) of $x$ and $y$.
\end{remark}
\begin{proof}
    \ref{it:rds:white}-\ref{it:rds:c2} follow from \cref{prop:frakabreg2} and \cite[Theorem 28]{arnold_perfect_1995}, and additionally \eqref{eq:frakabper} in the case $S = \bbT$.
    For \ref{it:rds:integrable} it suffices by periodicity to prove the case $S = \bbT$.
    We differentiate \eqref{eq:sdegammasimp} using the chain rule to find that $\partial_x \gamma^x_{\sigma}$ satisfies the following SDE:
    \begin{equation}
        \label{eq:sdepartialxgammax}
        \diff (\partial_{x}\gamma^x_{\sigma}) 
        = \sigma^2\mathfrak{a}'(\gamma^x_{\sigma})\partial_x \gamma^x_{\sigma} \rd t
        + \sigma \sum_{k \in \bbZ} \mathfrak{b}_k'(\gamma^x_{\sigma})\partial_x \gamma^x_{\sigma}  \rd \beta_k(t)
    \end{equation}
    with initial condition $\partial_x \gamma^x_{\sigma}(0) = 1$, with similar equations being satisfied by $\partial_x^{(2)} \gamma^x_{\sigma}$ and $\partial_x^{(3)} \gamma^x_{\sigma}$.
    Applying well-known theory for SDEs with Lipschitz coefficients, it then follows that $\sup_{x \in \bbT} \EE{\abs{\partial_x^{(n)}\gamma^x_{\sigma}(1)}} < \infty$ for $n \in \cur{0,1,2,3}$.
    Hence, by the Sobolev embedding and Fubini's theorem we have
    \begin{equation*}
        \EE{\norm{x \mapsto \gamma^x_{\sigma}(1)}_{C^2(\bbT;\bbT)}} 
        \lesssim \EE{\norm{x \mapsto \gamma^x_{\sigma}(1)}_{W^{3,1}(\bbT;\bbT)}}
        = \int_{\bbT}\EE[\Big]{\sum_{k=0}^{3} \abs{\partial_x^{(k)} \gamma^x_{\sigma}(1)}} \rd x < \infty. \qedhere
    \end{equation*}
\end{proof}
    We now let $(P_t f)(x) \coloneq \EE[]{f(\phi_{\sigma}(t,\omega,x))}$ be the Markov semigroup on $C_b(\bbT)$ associated with $\phi_{\sigma}$, and formulate the following ergodicity properties:
\begin{proposition}[Ergodicity]
    \label{prop:mixing}
    $P_t$ has a unique invariant measure $\mu = p \rd x$, where $p \in C^2(\bbT)$ is strictly positive.
    Moreover, $\mu$ is exponentially mixing in the sense that there exist $C,a > 0$ such that
    \begin{equation}
        \label{eq:uniformmixing}
        \norm{P_t f - \int_{\bbT} f \rd \mu}_{C_b(\bbT)} \leq Ce^{-at}\norm{f}_{C_b(\bbT)}, \qquad t \geq 0,
    \end{equation}   
    for every $f \in C_b(\bbT)$.
\end{proposition}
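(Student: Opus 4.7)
The plan is to verify three standard properties of the Markov semigroup $P_t$ on the compact torus $\bbT$: uniform ellipticity of the generator, existence and regularity of an invariant density, and a Doeblin-type minorization. Together these yield both parts of the proposition.

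First, I would set $B(x)^2 \coloneq \sum_{k \in \bbZ} \mathfrak{b}_k(x)^2$ and observe that \cref{prop:frakabreg2} gives $B^2 \in C^4(\bbT)$, while \cref{lem:sinspan} ensures the pointwise positivity $B(x)^2 \geq \mathfrak{b}_1(x)^2 + \mathfrak{b}_{-1}(x)^2 > 0$. Compactness of $\bbT$ upgrades this to a uniform lower bound $B^2 \geq c_0 > 0$. Since the quadratic variation of the noise in \eqref{eq:sdegammasimp} is $\sigma^2 B(\gamma_\sigma)^2 \rd t$, the generator of $P_t$ is the scalar second-order operator $L = (c + \sigma^2 \mathfrak{a}) \partial_x + \tfrac{\sigma^2}{2} B^2 \partial_{xx}$, which is therefore uniformly elliptic on $\bbT$ with coefficients of class $C^3$.

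Second, for the invariant measure I would analyse the adjoint equation $L^* p = 0$. On the compact one-dimensional torus with uniformly elliptic coefficients, this second-order ODE can be integrated explicitly via the scale and speed measures, producing (up to normalisation) a unique strictly positive $C^2$ solution $p$; this gives $\mu = p \rd x$. Alternatively one may invoke the Fredholm alternative for uniformly elliptic operators on compact manifolds together with the strong maximum principle. Either route simultaneously delivers uniqueness, strict positivity, and the required regularity.

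Third, for the mixing bound I would exploit parabolic regularity of $\partial_t - L$ on $\bbR^+ \times \bbT$ to obtain a smooth transition density $p_t(x,y)$ for every $t > 0$. Irreducibility, which follows from non-degeneracy of the diffusion on the connected space $\bbT$ via the Stroock--Varadhan support theorem (or equivalently a Harnack inequality), then forces $p_t(x,y) > 0$ everywhere. Joint continuity together with compactness of $\bbT \times \bbT$ yield $\eta > 0$ and $T > 0$ with $\inf_{x,y \in \bbT} p_T(x,y) \geq \eta$, i.e.\ the Doeblin minorization $P_T(x, \cdot) \geq \eta \rd x$ uniformly in $x$. The classical coupling argument now gives a uniform contraction of $P_T$ in total variation, which iterates to the exponential decay $\norm{P_t f - \int_{\bbT} f \rd \mu}_{C_b(\bbT)} \leq C e^{-at} \norm{f}_{C_b(\bbT)}$. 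The only non-routine input is the uniform ellipticity from step one; everything else is standard theory for elliptic diffusions on a compact manifold, and the substantive work has already been done in \cref{lem:sinspan}.
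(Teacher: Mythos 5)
The proposal is correct and takes essentially the same approach as the paper: both identify uniform ellipticity of the generator from \cref{lem:sinspan} and \cref{prop:frakabreg2}, solve the stationary Fokker--Planck equation $\cL^* p = 0$ on $\bbT$ for a strictly positive $C^2$ density, and obtain the exponential mixing from standard ergodicity theory for non-degenerate diffusions on a compact space. You merely spell out the Doeblin minorization and coupling argument where the paper instead cites \cite{bellet_ergodic_2006,mattingly_ergodicity_2002}, and you offer scale/speed measures or the Fredholm alternative in place of the paper's appeal to Schauder theory; these are interchangeable routes to the same conclusion.
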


\begin{remark}
    \label{rem:TV}
    The condition \eqref{eq:uniformmixing} is quite strong and might not be satisfied in different settings.
    However, \eqref{eq:uniformmixing} is only used once in the proof of \cref{thm:phisigmasync}, where the weaker condition $\lim_{t \to \infty}\norm{P_t f(\cdot) - \int_{\bbT} f\rd \mu}_{C_b(\bbT)} = 0$ (with convergence rate depending on $f \in C_b(\bbT)$) would already suffice.
\end{remark}
\begin{proof}
    The generator of $P_t$ is given by
    \begin{subequations}
    \begin{equation}
        \cL f = \tfrac{1}{2}\sigma^2\sum_{k \in \bbZ} \mathfrak{b}_k^2\partial_{xx}f + (c+\sigma^2\mathfrak{a}) \partial_x f,
    \end{equation}
    with formal adjoint
    \begin{equation}
        \label{eq:FPgenerator}
        \cL^* p = \tfrac{1}{2}\sigma^2\sum_{k \in \bbZ} \partial_{xx}(\mathfrak{b}_k^2 p) - \partial_x ((c+\sigma^2\mathfrak{a}) p).
    \end{equation}
    \end{subequations}
    We also have $\mathfrak{a} \in C^3(\bbT)$ and $x \mapsto \norm{\mathfrak{b}(x)}_{\ell^2(\bbZ)} \in C^3(\bbT)$ by \cref{prop:frakabreg2}, as well as $\inf_{x \in \bbT} \sum_{k \in \bbZ} \mathfrak{b}_k(x)^2 > 0$ by \cref{lem:sinspan}.
    It follows from well-known Schauder theory (see e.g. \cite{jost_partial_2013}) that $\cL^* p = 0$ has a solution $p \in C^2(\bbT)$ with $p > 0$, which means $\mu \coloneq p \rd x$ is an invariant measure.
    The exponential mixing \eqref{eq:uniformmixing}, which also implies uniqueness of $\mu$, can be seen from e.g.\ \cite{bellet_ergodic_2006} or \cite{mattingly_ergodicity_2002}.
\end{proof}

\subsection{Asymptotic stability}
\label{subsec:sdestable}
To prove asymptotic stability, we will show (strict) negativity of the \emph{Lyapunov exponent}, defined as
\begin{equation}
    \lambda \coloneq \lim_{t \to \infty}  t^{-1}\log \abs{\partial_{x}\phi_{\sigma}(t,\omega,x)}.
\end{equation}
Although $\lambda$ might a priori depend on $\omega$ and $x$, Oseledets' multiplicative ergodic theorem ensures that $\lambda$ is constant $\bbP \times \mu$-almost everywhere.
Moreover, expressions for $\lambda$ in terms of the invariant measure $\mu$ and the coefficients $\mathfrak{a}$ and $\mathfrak{b}$ are known.
In order to make our presentation more self-contained, we now prove these identities `by hand', relying on the Birkhoff pointwise ergodic theorem.
The reader who is familiar with these identities may skip the following proof.
We remark that the second identity in \eqref{eq:sdelyapunov} was seemingly only recently discovered by \textcite{bedrossian_regularity_2022}.

\begin{proposition}[Lyapunov exponent]
    The identity
    \begin{equation}
        \label{eq:sdelyapunov}
        \lambda = 
        \sigma^2 \int_{\bbT}\mathfrak{a}' -\tfrac{1}{2}\sum_{k \in \bbZ}(\mathfrak{b}_k')^2 \rd \mu = 
        - \tfrac{1}{2}\sigma^2 \sum_{k \in \bbZ} \int_{\bbT}\frac{\abs{\partial_x (\mathfrak{b}_k p)}^2}{p}\rd x
    \end{equation}
    holds for $\bbP\times \mu$-almost all $(\omega,x)$.
\end{proposition}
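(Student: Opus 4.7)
The plan is to derive an explicit formula for $\log|\partial_x \phi_\sigma(t,\omega,x)|$ via It\^o's formula, then apply the ergodic theorem to obtain the first identity; the second identity will follow from the stationary Fokker--Planck equation.

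First, I would differentiate the SDE \eqref{eq:sdegammasimp} in the initial condition, noting that $Y(t) \coloneq \partial_x \gamma^x_\sigma(t)$ satisfies the linear SDE \eqref{eq:sdepartialxgammax}. Since this is a geometric (Dol\'eans--Dade) SDE with $Y(0)=1$, the solution remains strictly positive for all $t$, so $\log Y(t)$ is well-defined. Applying It\^o's formula to $\log Y$ yields
\begin{equation*}
    \log Y(t) = \int_0^t \sigma^2 \mathfrak{a}'(\gamma^x_\sigma(s))\,\diff s
    - \tfrac{1}{2}\sigma^2 \sum_{k\in\bbZ} \int_0^t (\mathfrak{b}_k'(\gamma^x_\sigma(s)))^2 \,\diff s
    + M(t),
\end{equation*}
where $M(t) \coloneq \sigma \sum_k \int_0^t \mathfrak{b}_k'(\gamma^x_\sigma(s))\,\diff \beta_k(s)$ is a martingale with quadratic variation $\langle M\rangle_t = \sigma^2 \int_0^t \sum_k (\mathfrak{b}_k'(\gamma^x_\sigma(s)))^2\,\diff s$.

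Next, by \cref{prop:mixing} the cocycle $\phi_\sigma$ is ergodic with invariant measure $\mu$, so Birkhoff's pointwise ergodic theorem applied to the bounded continuous functions $\mathfrak{a}'$ and $\sum_k (\mathfrak{b}_k')^2$ (the latter converging absolutely in $C_b(\bbT)$ by \cref{prop:frakabreg2}) shows that $t^{-1}$ times the drift integrals converge $\bbP\times\mu$-a.s.\ to $\sigma^2 \int_{\bbT} \mathfrak{a}'\,\diff \mu$ and $\tfrac{1}{2}\sigma^2 \sum_k \int_{\bbT}(\mathfrak{b}_k')^2 \,\diff\mu$ respectively. Birkhoff also gives $t^{-1}\langle M\rangle_t \to \sigma^2 \sum_k \int (\mathfrak{b}_k')^2 \,\diff\mu < \infty$, and hence the strong law of large numbers for continuous local martingales yields $M(t)/t \to 0$ a.s. This establishes the first equality in \eqref{eq:sdelyapunov}.

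For the second equality, I would use that $\mu = p\,\diff x$ with $p \in C^2(\bbT)$, $p>0$, satisfying $\cL^* p = 0$ (equivalently $\int_{\bbT} \cL\varphi \cdot p\,\diff x = 0$ for all $\varphi \in C^2(\bbT)$). Testing with $\varphi = \log p$ (permissible since $p>0$) and using $(\log p)' p = p'$, $(\log p)'' p = p'' - (p')^2/p$, together with periodic integration by parts on $\int(c+\sigma^2\mathfrak{a})p'\,\diff x = -\sigma^2 \int \mathfrak{a}' p\,\diff x$, yields the identity
\begin{equation*}
    \sigma^2 \int_\bbT \mathfrak{a}' p\,\diff x = \tfrac{1}{2}\sigma^2 \sum_k \int_\bbT \mathfrak{b}_k^2 p''\,\diff x - \tfrac{1}{2}\sigma^2 \sum_k \int_\bbT \mathfrak{b}_k^2 (p')^2/p\,\diff x.
\end{equation*}
On the other hand, expanding $|\partial_x(\mathfrak{b}_k p)|^2 = (\mathfrak{b}_k')^2 p^2 + (\mathfrak{b}_k^2)' p p' + \mathfrak{b}_k^2 (p')^2$, dividing by $p$, and integrating (using the periodic identity $\int (\mathfrak{b}_k^2)' p'\,\diff x = -\int \mathfrak{b}_k^2 p''\,\diff x$) allows the right-hand side of \eqref{eq:sdelyapunov} to be rewritten in terms of $\int (\mathfrak{b}_k')^2 p$, $\int \mathfrak{b}_k^2 p''$ and $\int \mathfrak{b}_k^2(p')^2/p$; substituting the Fokker--Planck identity above produces exactly the middle expression of \eqref{eq:sdelyapunov}.

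The main obstacle is the second equality: the two expressions differ by terms whose cancellation is not evident without invoking the stationarity of $p$, and the integration by parts must be combined with the Fokker--Planck identity in just the right way. The first equality, by contrast, is a fairly routine combination of It\^o calculus and ergodic theory, modulo checking that the linear SDE keeps $Y$ positive and that the integrability hypotheses of Birkhoff and the martingale LLN are met — both guaranteed by \cref{prop:frakabreg2,prop:mixing}.
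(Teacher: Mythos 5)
Your proof is correct and follows essentially the same route as the paper: Itô's formula on $\log\lvert\partial_x\phi_\sigma\rvert$ via the linearized SDE \eqref{eq:sdepartialxgammax}, Birkhoff's ergodic theorem for the drift, and the martingale strong law for the noise term, then testing the stationary Fokker--Planck equation $\cL^*p=0$ against $\log p$ and integrating by parts. The only differences are cosmetic (you work with $\int (\cL\log p)\,p\,\rd x = 0$ rather than $\int(\cL^*p)\log p\,\rd x = 0$, which are identical, and your integration-by-parts bookkeeping is slightly rearranged); your explicit remark that the Dol\'eans--Dade structure keeps $\partial_x\gamma^x_\sigma$ strictly positive is a useful detail the paper leaves implicit.
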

\begin{proof}
Applying It\^o's formula to \eqref{eq:sdepartialxgammax} gives
\begin{equation*}
    \rd \log\abs{\partial_x \phi_{\sigma}(t,\omega,x)} = 
    \sigma^2 \mathfrak{a}'(\gamma^x_{\sigma}) \rd t
    - \tfrac{1}{2}\sigma^2 \sum_{k \in \bbZ}\mathfrak{b}_k'(\gamma^x_{\sigma})^2 \rd t
    + \sigma \sum_{k \in \bbZ} \mathfrak{b}_k'(\gamma^x_{\sigma}) \rd \beta_k(t).
\end{equation*}
Integrating this and recalling $\gamma^x_{\sigma}(t) = \phi_{\sigma}(t,\omega,x)$, we get $\lambda = \lim_{t \to \infty}\lambda_1(t)+\lambda_2(t)$ with
\begin{align*}
    \lambda_1(t) &\coloneq \sigma^2 t^{-1}
    \int_0^t \mathfrak{a}'(\gamma^x_{\sigma}(s)) - \tfrac{1}{2}\sum_{k \in \bbZ}\mathfrak{b}_k'(\gamma^x_{\sigma}(s))^2 \rd s, \\
    \lambda_2(t) &\coloneq \sigma t^{-1} \sum_{k \in \bbZ} \int_0^t \mathfrak{b}_k'(\gamma^x_{\sigma}(s)) \rd \beta_k(s).
\end{align*}
Since $\norm{b}_{\ell^2(\bbZ;C^1(\bbT))} < \infty$ by \cref{prop:frakabreg2}, it follows from the strong law of large numbers for martingales that $\lim_{t \to \infty} \lambda_2(t) = 0$ almost surely for every $x \in \bbT$.
For $\lambda_1$, it follows from \cref{prop:mixing} and Birkhoff's pointwise ergodic theorem that 
\begin{equation*}
    \lim_{t \to \infty}\lambda_1(t) = \sigma^2 \int_{\bbT}\mathfrak{a}'(x) -\tfrac{1}{2}\sum_{k \in \bbZ}(\mathfrak{b}_k'(x))^2 \rd \mu(x)
\end{equation*}
for $\bbP \times \mu$-almost every $(\omega,x)$, so the first identity in \eqref{eq:sdelyapunov} holds.

For the second identity, we follow the proof of \cite[Proposition 3.2]{bedrossian_regularity_2022}.
Recall from \cref{prop:mixing} that $\mu = p \rd x$ where $p$ satisfies the Fokker--Planck equation $\cL^*p = 0$.
Multiplying \eqref{eq:FPgenerator} by $\sigma^{-2}\log(p)$ and integrating, we find:
\begin{equation}
    \label{eq:FPtested}
    \tfrac{1}{2}\sum_{k \in \bbZ}\int_{\bbT}\partial_{xx}(\mathfrak{b}_k^2\, p)\log(p)\rd x
    = \int_{\bbT}\partial_x ((\sigma^{-2}c + \mathfrak{a})p)\log(p) \rd x
    = \int_{\bbT}(\partial_x \mathfrak{a}) p \rd x,
\end{equation}
where the second identity in \eqref{eq:FPtested} is obtained by integrating by parts back and forth.
For the left-hand side of \eqref{eq:FPtested}, we additionally find:
\begin{align}
    \tfrac{1}{2}\sum_{k \in \bbZ}\int_{\bbT}\partial_{xx}(\mathfrak{b}_k^2\, p)\log(p)\rd x
    &= -\tfrac{1}{2}\sum_{k \in \bbZ}\int_{\bbT}\frac{\partial_x(\mathfrak{b}_k^2 \,p)\partial_x p}{p}\rd x \nonumber \\ 
    &= \tfrac{1}{2}\sum_{k \in \bbZ}\int_{\bbT}\frac{p^2 (\partial_x \mathfrak{b}_k)^2 - (\partial_x (\mathfrak{b}_kp))^2}{p}\rd x \nonumber \\
    \label{eq:FPtestedLHS}
    &= \tfrac{1}{2}\sum_{k \in \bbZ}\int_{\bbT} (\partial_x \mathfrak{b}_k)^2 p - \frac{(\partial_x (\mathfrak{b}_k p))^2}{p}\rd x.
\end{align}
Combining \eqref{eq:FPtested} and \eqref{eq:FPtestedLHS}, the second identity in \eqref{eq:sdelyapunov} follows.
\end{proof}

\begin{corollary}
    \label{cor:asymstable}
    We have $\lambda < 0$, and $\phi_{\sigma}$ is asymptotically stable in the sense of \cite[Definition 2.2]{flandoli_synchronization_2017}.
\end{corollary}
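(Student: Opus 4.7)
The plan is to split the corollary into its two constituent claims and handle them independently. Strict negativity of $\lambda$ follows by a direct contradiction argument from the second representation in \eqref{eq:sdelyapunov}, while the asymptotic stability assertion is then a routine consequence of the RDS properties already established, combined with $\lambda < 0$.

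For $\lambda < 0$, I would observe that the rightmost formula in \eqref{eq:sdelyapunov} is a sum of manifestly nonpositive terms (since $p > 0$ by \cref{prop:mixing}), so $\lambda \leq 0$ is automatic and it only remains to exclude equality. Suppose for contradiction that $\lambda = 0$. Then every summand must vanish, i.e.\ $\partial_x(\mathfrak{b}_k p) \equiv 0$ on $\bbT$ for every $k \in \bbZ$, which forces $\mathfrak{b}_k p$ to be a constant $c_k \in \bbR$. Strict positivity of $p$ then yields $\mathfrak{b}_k = c_k / p$ for every $k$. In particular, $\mathfrak{b}_1$ and $\mathfrak{b}_{-1}$ would both lie in the one-dimensional linear subspace $\Span\{1/p\}$ of $C(\bbT)$. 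This directly contradicts \cref{lem:sinspan}, which identifies $\Span\{\mathfrak{b}_1, \mathfrak{b}_{-1}\}$ with the two-dimensional space $\Span\{\cos(2\pi\cdot), \sin(2\pi\cdot)\}$.

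For the second part, I would feed the ingredients assembled so far into the Flandoli--Gess--Scheutzow framework: \cref{prop:rds} gives that $\phi_\sigma$ is a $C^2$ cocycle on $\bbT$ with integrable $C^2$-norm on the unit time interval, \cref{prop:mixing} produces the unique ergodic invariant measure $\mu$, and the first part yields $\lambda < 0$. On the one-dimensional torus one then has the concrete bound
\begin{equation*}
    d(\phi_\sigma(t,\omega,x), \phi_\sigma(t,\omega,y)) \leq \abs{x-y} \, \sup_{z \in [x,y]} \abs{\partial_z \phi_\sigma(t,\omega,z)}
\end{equation*}
for $x,y$ close enough in $\bbT$, and the multiplicative ergodic theorem upgrades this to an exponential local contraction at rate $\lambda$ starting from $\mu$-a.e.\ $z$. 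This is precisely the local asymptotic stability required by \cite[Definition 2.2]{flandoli_synchronization_2017}.

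The only nontrivial step is the spanning/contradiction argument in the first part, which is where the nondegeneracy hypothesis on the first Fourier modes of the noise (\cref{ass:noise}) genuinely enters through \cref{lem:sinspan}. Everything else is standard RDS machinery, so I do not expect hidden difficulties; the main care required is to ensure the regularity and integrability hypotheses of the cited abstract asymptotic stability criterion match those furnished by \cref{prop:rds,prop:mixing}.
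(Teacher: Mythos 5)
Your proof matches the paper's essentially step for step. For $\lambda<0$, you and the paper both read off nonpositivity from the second representation in \eqref{eq:sdelyapunov} and rule out $\lambda=0$ by observing that $\partial_x(\mathfrak{b}_{\pm1}p)\equiv0$ together with $p>0$ would force $\mathfrak{b}_1$ and $\mathfrak{b}_{-1}$ to be linearly dependent, contradicting \cref{lem:sinspan}; your phrasing via $\mathfrak{b}_k=c_k/p \in \Span\{1/p\}$ is just a mild repackaging of the same contradiction. For the stability assertion, the paper likewise invokes the Flandoli--Gess--Scheutzow machinery — specifically \cite[Corollary~3.4]{flandoli_synchronization_2017}, after verifying the integrability hypotheses (3.3)--(3.4) from \cref{prop:rds}\ref{it:rds:integrable} via Jensen's inequality and noting the statement transfers to $\bbT$ — which is what your appeal to that framework with the $C^2$-norm integrability amounts to. The one thing to be careful about in your mean-value + MET sketch is that the MET alone gives a.e.~pointwise decay of $\log\abs{\partial_x\phi_\sigma}$, and promoting that to contraction of a fixed small neighbourhood is exactly what the $\mathbb{E}\norm{\cdot}_{C^2}$ integrability (and hence conditions (3.3)--(3.4)) is needed for; so you should really lean on the cited abstract corollary rather than treat the MET step as self-contained.
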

\begin{proof}
    Since $\mu = p\rd x$ is a probability measure, we see from \eqref{eq:sdelyapunov} and Jensen's inequality that
    \begin{equation*}
        \lambda = -\tfrac{1}{2}\sigma^2 \sum_{k \in \bbZ} \int_{\bbT}\frac{\abs{\partial_x (\mathfrak{b}_k p)}^2}{p^2}p\rd x
        \leq -\tfrac{1}{2}\sigma^2\sum_{k \in \bbZ}\int_{\bbT}\abs{\partial_x(\mathfrak{b}_k p)} \rd x \leq 0.
    \end{equation*}
    Furthermore, if $\lambda = 0$ then we would necessarily have
    \begin{equation*}
        \partial_x (\mathfrak{b}_1p) \equiv \partial_x (\mathfrak{b}_{-1}p) \equiv 0,
    \end{equation*}
    which (since $p > 0$) would further imply that $\mathfrak{b}_1(\cdot)$ and $\mathfrak{b}_{-1}(\cdot)$ are linearly dependent.
    However, by \cref{lem:sinspan} this is not the case so we have $\lambda < 0$ by contraposition.
    From \cref{prop:rds}-\ref{it:rds:integrable} and Jensen's inequality we also see that \cite[(3.3)-(3.4)]{flandoli_synchronization_2017} are satisfied, so that asymptotic stability holds by \cite[Corollary 3.4]{flandoli_synchronization_2017} (which extends \emph{mutatis mutandis} to the torus).
\end{proof}

\subsection{Irreducibility and controllability}
\label{subsec:sdecontrol}
We now verify two irreducibility-type conditions needed for weak synchronization, which are formulated in terms of lower bounds on the probabilities of certain events involving $\phi_{\sigma}$ (see \cref{prop:pwsst,prop:meeting}).
Since our noise is multiplicative, this is not a trivial matter.
Our strategy is to first study a control system associated with the two-point motion of \eqref{eq:sdegammasimp}, where $\diff \beta_1$ and $\diff \beta_{-1}$ are replaced with appropriate control functions.
Once the relevant controllability properties are shown, the irreducibility conditions follow by applying the support theorem for diffusions by \textcite{stroock_support_1972} (see also \cite[Theorem 3.5]{millet_simple_1994} for a version which suffices for our purposes).

Throughout this section we write $\cH$ for the space of piecewise constant compactly supported functions from $\bbR^+$ to $\bbR^2$ and write $h = (h_1,h_{-1})$ for $h \in \cH$.
The functions $h \in \cH$ will serve as controls.
For $x \in \bbR$ and $h \in \cH$ we write, in analogy with our previous notation, $\tilde{\gamma}^x_h(t)$ and $\gamma^x_h(t)$ for the unique solutions to the following ODEs:
\begin{subequations}
\begin{align}
    \label{eq:odegammatildehx}
    \frac{\diff \tilde{\gamma}^x_h}{\diff t} &= \sigma \bra[\big]{\mathfrak{b}_{1}(\tilde{\gamma}^x_h)h_1 + \mathfrak{b}_{-1}(\tilde{\gamma}^x_h)h_{-1}}, \\
    \label{eq:odegammahx}
    \frac{\diff \gamma^x_h}{\diff t} &= c 
    + \sigma^2\bra[\big]{ \mathfrak{a}(\gamma^x_h) - \tfrac{1}{2}\sum_{k \in \bbZ}\mathfrak{b}_k'(\gamma^x_h)\mathfrak{b}(\gamma^x_h)}
    + \sigma \bra[\big]{\mathfrak{b}_{1}(\gamma^x_h)h_1 + \mathfrak{b}_{-1}(\gamma^x_h)h_{-1}},
\end{align}
\end{subequations}
with initial conditions $\tilde{\gamma}^x_h(0) = \gamma^x_h(0) = x$.
Note that \eqref{eq:odegammahx} resembles the SDE for $\gamma_{\sigma}$ in Stratonovich form \eqref{eq:sdegammasimpstrat}, except we have replaced the driving white noises $\rd \beta_{k}$ by control functions $h_{k}$, all of which are zero except for $k = \pm 1$.

\subsubsection{Pointwise strong swift transitivity}
The first irreducibility-type condition that we show is pointwise strong swift transitivity, introduced in \cite[Definition 2.22]{flandoli_synchronization_2017}.
\begin{lemma}
    \label{lem:psstcontrolgammahx}
    For all $x_1,x_2,y \in \bbT$ with $x_1 \neq x_2$, there exists $h \in \cH$ such that
    \begin{equation*}
        \max\cur{d(\gamma^{x_1}_h(1),y),d(\gamma^{x_2}_h(1),y)} \leq \tfrac{3}{2}d(x_1,x_2). 
    \end{equation*}
\end{lemma}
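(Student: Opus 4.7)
The plan is to use a \emph{constant} control $h(t) \equiv (h_1, h_{-1}) \in \bbR^2$ on $[0,1]$ with large amplitude, chosen so that the resulting autonomous ODE \eqref{eq:odegammahx} has an attracting fixed point very close to $y$. If we arrange matters so that neither $x_1$ nor $x_2$ coincides with the corresponding repelling fixed point, then both trajectories $\gamma^{x_1}_h$ and $\gamma^{x_2}_h$ will converge exponentially fast toward the attractor, so that at time $1$ they lie within an arbitrarily prescribed distance of $y$.

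First I would analyze the structure of the control vector field. By the explicit formula \eqref{eq:frakbmatrix} and \cref{lem:sinspan}, for any nonzero $h \in \bbR^2$ the function
\begin{equation*}
    x \mapsto \sigma\bra[\big]{\mathfrak{b}_{1}(x) h_1 + \mathfrak{b}_{-1}(x) h_{-1}}
\end{equation*}
is of the form $\rho(h) \cos(2\pi x - \eta(h))$, where $\rho(h) > 0$ can be made arbitrarily large by scaling $|h|$ and $\eta(h) \in [0, 2\pi)$ can be prescribed freely by rotating the direction of $h$. Consequently the pure-control ODE has exactly one attracting and one repelling fixed point on $\bbT$, always at mutual distance $\tfrac{1}{2}$, and the position of the attractor is a free parameter.

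Next I would pick an auxiliary target $y^* \in \bbT$ arbitrarily close to $y$ and with $y^* + \tfrac{1}{2} \notin \{x_1, x_2\}$; such a $y^*$ exists since the bad set has only two points. Then I would choose $h$ so that the attractor of the \emph{full} drift-plus-control ODE is at (or very close to) $y^*$; since for large $\rho(h)$ this attractor is an $O(\rho(h)^{-1})$ perturbation of the pure-control attractor, this can be done by a simple implicit function argument. Because the linearization at the attractor has eigenvalue of order $-\sigma\rho(h)$ and neither $x_i$ sits on the repelling fixed point, both trajectories escape the repelling point on a timescale $\sim (\sigma\rho(h))^{-1}\log(1/\mathrm{dist})$ and then contract toward $y^*$ at the same rate. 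Taking $|h|$ sufficiently large makes $\max_i d(\gamma^{x_i}_h(1), y^*) < \eps$ for any prescribed $\eps > 0$, and combined with $d(y^*,y) < \eps$ this yields $d(\gamma^{x_i}_h(1), y) < 2\eps \leq \tfrac{3}{2} d(x_1, x_2)$ once $\eps$ is chosen small enough.

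The main obstacle I anticipate is handling the degenerate case in which one of $x_1, x_2$ happens to sit exactly at the repelling fixed point of the pure-control ODE, where the trajectory would otherwise be frozen; this is precisely what the small shift $y \rightsquigarrow y^*$ is designed to avoid. A secondary technical check is quantifying the bounded-drift contribution to the location of the true attractor of \eqref{eq:odegammahx}, which requires showing that the $O(1)$ drift is dominated by the $O(\sigma\rho(h))$ restoring force for $|h|$ large. Neither obstacle is deep, but both depend crucially on the nondegeneracy $\mathfrak{b}_1^2 + \mathfrak{b}_{-1}^2 > 0$ from \cref{lem:sinspan}, which guarantees that $\rho(h)$ does not vanish and hence that the ``spike + relax'' strategy has full access to the torus.
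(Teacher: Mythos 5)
Your approach is essentially the same as the paper's: both use a large-amplitude constant (hence piecewise-constant) control producing a sinusoidal vector field with an attracting fixed point near $y$, then choose the target to keep the associated repeller away from $x_1, x_2$, and finally crank up the control amplitude so it dominates the bounded drift and pushes both trajectories to the target within time $1$. The only difference is bookkeeping: the paper picks one fixed target $\eta$ with $d(\eta, y) \leq d(x_1,x_2)$ \emph{and} repeller $\eta + \tfrac12$ at distance $\geq d(x_1,x_2)/4$ from $x_1, x_2$ (achievable since the bad set has measure less than the allowed arc), whereas you shift to $y^*$ arbitrarily close to $y$ and let the control amplitude absorb the possibly small distance to the repeller — both are correct.
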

\begin{proof}
    Since $x_1 \neq x_2$ we may write $a = d(x_1,x_2) > 0$.
    Now let $\eta \in \bbT$ be such that $d(\eta,y) \leq a$ and $\min\cur{d(x_1,\eta+\tfrac{1}{2}),d(x_2,\eta+\tfrac{1}{2})} \geq a/4$.
    By \cref{lem:sinspan} we can find for any $L > 0$ a control $h \in \cH$ such that
    \begin{equation*}
        \frac{\diff \gamma^{x}_h}{\diff t} =  c 
    + \sigma^2\bra[\big]{ \mathfrak{a}(\gamma^{x}_h) - \tfrac{1}{2}\sum_{k \in \bbZ}\mathfrak{b}_k'(\gamma^{x}_h)\mathfrak{b}(\gamma^{x}_h)} - L\sin(2\pi \gamma^{x}_h - \eta)
    \end{equation*}
    for $x \in \bbR$ and $t \in [0,1]$.
    Examining the sign of the right-hand side, we see that by choosing $L$ sufficiently large (depending on $c$, $\sigma$, $\mathfrak{a}$, $\mathfrak{b}$, $x_1$, $x_2$, $y$) we can ensure that $\max\cur{d(\gamma^{x_1}_h(1),\eta),d(\gamma^{x_2}_h(1),\eta)} \leq a/2$.
    The claim then follows by the triangle inequality since $d(\eta,y) \leq a = d(x_1,x_2)$.
\end{proof}
\begin{proposition}
    \label{prop:pwsst}
    The RDS $\phi_{\sigma}$ (on $\bbT$) is pointwise strongly swift transitive in the sense of \cite[Definition 2.22]{flandoli_synchronization_2017}.
\end{proposition}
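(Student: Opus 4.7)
The strategy is to combine the deterministic control-theoretic result in \cref{lem:psstcontrolgammahx} with the Stroock--Varadhan type support theorem for diffusions (in the form of \cite{millet_simple_1994}), applied to the two-point motion of \eqref{eq:sdegammasimp}. The role of \eqref{eq:odegammahx} rather than \eqref{eq:odegammatildehx} is precisely to identify the correct deterministic skeleton: it is the Stratonovich drift of \eqref{eq:sdegammasimp} with the Brownian increments $\rd \beta_{\pm 1}$ replaced by the controls $h_{\pm 1}$, and with all other (non-controlled) noise modes switched off.

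The plan is as follows. Fix $x_1 \neq x_2$ and $y$ in $\bbT$ as in \cref{lem:psstcontrolgammahx}, and let $\eps > 0$ be arbitrary. First I invoke \cref{lem:psstcontrolgammahx} to produce a piecewise constant control $h \in \cH$, supported on $[0,1]$, for which both controlled trajectories $\gamma^{x_1}_h(1)$ and $\gamma^{x_2}_h(1)$ lie within $\tfrac{3}{2}d(x_1,x_2)$ of $y$. Next I consider the two-point process $\Phi_{\sigma}(t,\omega) \coloneq (\phi_{\sigma}(t,\omega,x_1),\phi_{\sigma}(t,\omega,x_2))$, which is itself a diffusion on $\bbT \times \bbT$ with smooth coefficients by \cref{prop:frakabreg2,prop:rds}. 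Its Stratonovich skeleton driven by $h$ (in the $k = \pm 1$ modes) and zero in every other mode is exactly $(\gamma^{x_1}_h, \gamma^{x_2}_h)$. The support theorem then tells me that
\begin{equation*}
    \PP{\sup_{t \in [0,1]} \max_{i = 1,2} d(\phi_{\sigma}(t,\omega,x_i), \gamma^{x_i}_h(t)) < \eps} > 0,
\end{equation*}
and combining with the control estimate yields, by the triangle inequality,
\begin{equation*}
    \PP{\max_{i=1,2} d(\phi_{\sigma}(1,\omega,x_i), y) \leq \tfrac{3}{2}d(x_1,x_2) + \eps} > 0,
\end{equation*}
which is exactly the form of pointwise strong swift transitivity required in \cite[Definition 2.22]{flandoli_synchronization_2017}.

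The hypotheses of the support theorem need to be verified: smoothness of the coefficients (given by \cref{prop:frakabreg2}), non-explosion (automatic on the compact torus), and the requirement that the control vector fields be sufficient to recover the diffusion's support. Since only the $\beta_{\pm 1}$ driving modes are being controlled while all other modes are set to zero, one must check that the remaining (untouched) Stratonovich diffusion coefficients do not obstruct the approximation. This is harmless: the formulation of \cite[Theorem 3.5]{millet_simple_1994} allows one to use piecewise-constant approximations of an arbitrary subset of the driving Brownian motions, setting the others identically to zero, at the cost only of retaining the full Stratonovich drift, which is already the drift appearing in \eqref{eq:odegammahx}. The anticipated main obstacle is therefore bookkeeping: making sure the It\^o--Stratonovich correction in \eqref{eq:odegammahx} is handled consistently for the two-point motion, and that the summability $\mathfrak{b} \in \ell^2(\bbZ;C^4(\bbT))$ from \cref{prop:frakabreg2} is enough to apply the support theorem to the $\ell^2$-indexed noise. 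Both points reduce to the standard localization/truncation of the untouched Fourier modes, which is possible thanks to the uniform $\ell^2$-control on $\mathfrak{b}$.
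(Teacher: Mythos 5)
Your proposal is correct and takes essentially the same route as the paper: combine \cref{lem:psstcontrolgammahx} with the Stroock--Varadhan support theorem applied to the two-point motion. The paper's own proof is a one-liner stating exactly this; your version simply spells out the bookkeeping (two-point diffusion, Stratonovich skeleton, $\ell^2$-indexed noise) that the paper leaves implicit. One minor polish you should make: after invoking the support theorem you obtain the bound $\tfrac{3}{2}d(x_1,x_2) + \eps$, and you should note explicitly that since $d(x_1,x_2) > 0$ one may choose $\eps$ small enough (e.g.\ $\eps < \tfrac{1}{4}d(x_1,x_2)$) to land strictly below the threshold required by \cite[Definition 2.22]{flandoli_synchronization_2017}; as stated, ``exactly the form'' is slightly imprecise because of the additive $\eps$.
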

\begin{proof}
    Fix $x_1,x_2,y \in \bbT$ with $x_1 \neq x_2$\footnote{Note that \cite[Definition 2.22]{flandoli_synchronization_2017} should contain the additional condition $x_1 \neq x_2$, as confirmed to us by the authors.}.
    Applying the support theorem for diffusions to the two-point motion $(\phi_{\sigma}(t,\omega,x_1), \phi_{\sigma}(t,\omega,x_2))$ and using \cref{lem:psstcontrolgammahx}, the claim follows.
\end{proof}

\subsubsection{Meeting condition}
The second irreducibility-type condition needed is that for any $x_1, x_2 \in \bbT$, we must have:
\begin{equation}
    \label{eq:meeting}
    \PP[\Big]{\liminf_{t \to \infty} d(\phi_{\sigma}(t,\omega,x_1),\phi_{\sigma}(t,\omega,x_2)) = 0} = 1
\end{equation}
(see \cite[(2.10)]{flandoli_synchronization_2017}).
We refer to \eqref{eq:meeting} (which is unnamed in \cite{flandoli_synchronization_2017}) as the \emph{meeting condition}, since it requires that any two trajectories meet arbitrarily closely infinitely often.
To show \eqref{eq:meeting}, our strategy is to consider for $\eps,T > 0$ the events
\begin{equation}
    \label{eq:defmeetingevent}
    B_n \coloneq \cur[\bigg]{\sup_{x_1,x_2 \in \bbT}\inf_{t \in [0,5T]} d(\phi_{\sigma}(t,\theta_{5nT}\omega,x_1),\phi_{\sigma}(t,\theta_{5nT}\omega,x_2)) \leq 4\eps}, \quad n \in \bbN_0,
\end{equation}
and make the following observations:
\begin{itemize}
    \item $B_n$ is $\cF_{5nT,5(n+1)T}$-measurable by \cref{prop:rds}-\ref{it:rds:white}, and thus the events $B_n$ are independent.
    \item $\PP{B_n} = \PP{B_0}$ for all $n \in \bbN$ by invariance of $\bbP$ under $\theta$.
    \item $\cur{B_n \text{ occurs infinitely often}} \subset \cur{\liminf_{t \to \infty} d(\phi_{\sigma}(t,\omega,x_1),\phi_{\sigma}(t,\omega,x_2)) \leq 4\eps}$.
\end{itemize}
Thus, if we can show that for every $\eps > 0$ there exists a $T > 0$ such that $\PP{B_0} > 0$, then \eqref{eq:meeting} will follow by an application of the second Borel--Cantelli lemma.

Showing $\PP{B_0} > 0$ is still challenging, since $B_0$ involves all trajectories simultaneously.
Our trick to circumvent this is to show that $B_0$ is implied by a certain event $A$ which involves only three trajectories.
To illustrate, consider the `squeezing' event depicted in \cref{fig:squeezing}, involving trajectories of \eqref{eq:sdegammasimp} starting from $z_1 = 0$, $z_2 = 1/3$, $z_3 = -1/3$.
In this event, any two trajectories starting in between $z_2$ and $z_3$ on the same side as $z_1$ will be squeezed together near $z_1$ by monotonicity.
By performing such a squeezing consecutively first near $z_1$, then $z_2$, and finally $z_3$, it follows by the pigeonhole principle that \emph{any} two points must get squeezed at some point of the cycle, which guarantees that the event $B_0$ occurs.

It then only remains to show that such a `triple squeezing cycle' occurs with positive probability.
Since this only involves the three-point motion $(\phi_{\sigma}(t,\omega,z_i))_{i \in \cur{1,2,3}}$ this can be done similarly to how we showed pointwise strong swift transitivity.

\begin{figure}
    \centering
    \includegraphics[width=0.8\textwidth]{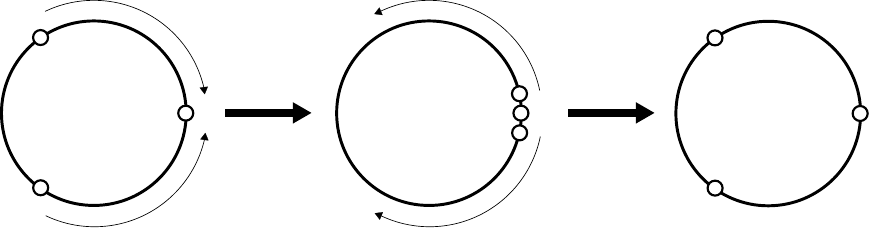}
    \caption{Trajectories starting at $z_2$ and $z_3$ `squeeze' near $z_1$ and afterwards return to their initial position.}
    \label{fig:squeezing}
\end{figure}

Let us now make these ideas rigorous.
We begin by exhibiting a control for \eqref{eq:odegammatildehx} which witnesses the aforementioned squeezing cycle for the three-point motion.
\begin{lemma}
    \label{lem:kissingcontrolgammatildehx}
    Let $z_1 = 0$, $z_2=1/3$, $z_3 = -1/3$.
    For every $\eps > 0$ there exists $T>0$, $h \in \cH$ such that
    \begin{equation}
    \label{eq:kissingcontrolgammatildehx}
    \begin{aligned}
        z_1-\eps \leq \tilde{\gamma}^{z_3}_h(T)&< \tilde{\gamma}^{z_2}_h(T) \leq z_1+\eps, \\
        z_2-\eps \leq \tilde{\gamma}^{z_1}_h(3T)&< \tilde{\gamma}^{z_3}_h(3T)+1 \leq z_2+\eps, \\
        z_3-\eps \leq \tilde{\gamma}^{z_2}_h(5T)-1&< \tilde{\gamma}^{z_1}_h(5T) \leq z_3+\eps.
    \end{aligned}
    \end{equation}
\end{lemma}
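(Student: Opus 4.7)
Using \cref{lem:sinspan}, for every $L > 0$ and $p \in \bbR$ there exist constants $(h_1,h_{-1}) \in \bbR^2$ with $\sigma(\mathfrak{b}_1 h_1 + \mathfrak{b}_{-1} h_{-1}) \equiv -L\sin(2\pi(\cdot - p))$. A piecewise constant $h \in \cH$ therefore lets $\tilde\gamma^x_h$ follow any finite concatenation of ODEs $\dot x = -L_i\sin(2\pi(x-p_i))$; each such ODE is strictly order-preserving on $\bbR$, with stable fixed points in the lift at $p_i + \bbZ$ and unstable fixed points at $p_i + 1/2 + \bbZ$. The plan is to construct $h$ as five consecutive stages of this form, each of duration $T$, so that the checkpoint times $T,3T,5T$ realise the three inequalities in \eqref{eq:kissingcontrolgammatildehx}.

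Using the obvious squeezing parameters $p_1 = 0$, $p_3 = 1/3$, $p_5 = -1/3$ alone does not work, because each such stage pulls all three trajectories to the stable point from the \emph{same} side of the unstable, whereas the conditions $\tilde\gamma^{z_3}_h(3T) + 1 \approx 1/3$ and $\tilde\gamma^{z_2}_h(5T) - 1 \approx -1/3$ demand that one of the trajectories wind once around the lift in the direction opposite to the others. I insert stages~2 and~4 as ``splitters'' to fix this. Stage~1 leaves $\tilde\gamma^{z_1}_h$ at $0$ and drives $\tilde\gamma^{z_2}_h,\tilde\gamma^{z_3}_h$ within $\eps$ of $0$ from opposite sides, giving the first inequality. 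Stage~2 takes $p_2 = 1/2 - \delta$ for any $\delta \in (0, \abs{\tilde\gamma^{z_3}_h(T)})$, so the unstable point $-\delta$ strictly separates $\tilde\gamma^{z_3}_h(T)$ from $\tilde\gamma^{z_1}_h(T) = 0$; consequently $\tilde\gamma^{z_1}_h,\tilde\gamma^{z_2}_h$ drift rightward in the lift toward $1/2 - \delta$ while $\tilde\gamma^{z_3}_h$ drifts leftward toward $-1/2 - \delta$. Stage~3 with $p_3 = 1/3$ then attracts all three to $1/3$ in $\bbT$, placing $\tilde\gamma^{z_1}_h,\tilde\gamma^{z_2}_h$ near lift $1/3$ and $\tilde\gamma^{z_3}_h$ near lift $-2/3$, which yields the second inequality.

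The construction on $[3T,5T]$ is analogous. Stage~4 uses $p_4 = -1/6 + \delta_4$, with $\delta_4$ chosen \emph{after} observing stage~3 as any element of the open interval $(\tilde\gamma^{z_1}_h(3T) - 1/3,\ \tilde\gamma^{z_2}_h(3T) - 1/3)$. This interval is nonempty by strict order preservation of stage~3 applied to $\tilde\gamma^{z_1}_h(2T) < \tilde\gamma^{z_2}_h(2T)$, which in turn is strict because $\tilde\gamma^{z_2}_h$ entered stage~2 closer to the stable point than $\tilde\gamma^{z_1}_h$ did. The resulting unstable point at $1/3 + \delta_4$ splits $\tilde\gamma^{z_1}_h$ (which flows down to lift $-1/6 + \delta_4$) from $\tilde\gamma^{z_2}_h$ (which flows up to lift $5/6 + \delta_4$), and the final squeezing stage with $p_5 = -1/3$ brings $\tilde\gamma^{z_1}_h$ in the lift near $-1/3$ and $\tilde\gamma^{z_2}_h$ near $2/3$, giving the third inequality. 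All strict inequalities in \eqref{eq:kissingcontrolgammatildehx} follow from strict order preservation of the 1D flows and the fact that their stable points are attained only in the infinite-time limit: the trajectory starting farther from the target stable point ends farther at any finite time, so the relevant gaps remain strictly positive. Finally, choosing each $L_i$ (equivalently, $T$) large enough brings all checkpoint positions within $\eps$ of their targets. The main delicacy is the retroactive choice of $\delta_4$, which is permissible since $h$ need only exist.
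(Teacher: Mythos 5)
Your proof is correct, but the construction differs from the paper's. The paper also uses five consecutive stages of the form $\dot{x} = -k(t)L\sin(2\pi(x - \eta(t)))$ with $T=1$, but makes stages 2 and 4 \emph{exact time reversals} of stages 1 and 3 (same target $z_1$ resp.\ $z_2$, opposite sign on the control, i.e.\ $k$ flips from $+1$ to $-1$ while $\eta$ is held fixed). Since the squeezing ODE is autonomous with unique solutions, running it backwards for the same duration exactly undoes the forward flow, whence $\tilde{\gamma}^x_h(2) = \tilde{\gamma}^x_h(4) = x$ for every $x$. Each squeezing stage therefore starts from the pristine configuration $(z_1, z_2, z_3)$, the winding is read off immediately from which lift of the new target is nearest to each $z_i$, and a single $L$ (chosen once, large enough) works for all three checkpoints. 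Your construction instead inserts splitter stages whose unstable fixed point is wedged retroactively between two particular trajectories, which requires choosing $\delta$ and $\delta_4$ (and then the later $L_i$) only after observing the outcome of the preceding stage; this is legitimate, since one only needs to exhibit a single $h$, but it entails more bookkeeping. In particular, the strict inequalities in the second and third lines of \eqref{eq:kissingcontrolgammatildehx} require chains such as $\tilde{\gamma}^{z_1}_h(2T) < 1/2 - \delta < \tilde{\gamma}^{z_3}_h(2T) + 1$ and $\tilde{\gamma}^{z_2}_h(4T) - 1 < -1/6 + \delta_4 < \tilde{\gamma}^{z_1}_h(4T)$ which you assert follow from ``strict order preservation''; they do, but verifying them requires tracking which side of the attracting fixed point each trajectory approaches from, which the time-reversal trick bypasses entirely. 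Both approaches prove the lemma; the paper's is shorter and requires no retroactive choices.
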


\begin{proof}
    We show that $T = 1$ suffices.
    Using \eqref{eq:sinspan} we can find for any $L > 0$ a control $h \in \cH$ such that
    \begin{equation}
        \label{eq:odegammatildehxkissing}
        \frac{\diff \tilde{\gamma}^x_h}{\diff t} = -k(t)L \sin(2 \pi \tilde{\gamma}^x_h(t) - \eta(t)), \qquad t \geq 0,\, x \in \bbR,
    \end{equation}
    where
    \begin{align*}
        k(t) &= 
          \ind_{[0,1]}(t)
        - \ind_{[1,2]}(t)
        + \ind_{[2,3]}(t)
        - \ind_{[3,4]}(t)
        + \ind_{[4,5]}(t), \\
        \eta(t) &= z_1 \cdot \ind_{[0,2]}(t) + z_2 \cdot\ind_{[2,4]}(t) 
                 + z_3 \cdot \ind_{[4,5]}(t) .
    \end{align*}
    With $h$ of this form, it follows from time reversal of \eqref{eq:odegammatildehx} that $x = \tilde{\gamma}^x_h(2) = \tilde{\gamma}^x_h(4)$ for every $x \in \bbR$.
    After examining \eqref{eq:odegammatildehxkissing} on the time intervals $[0,1]$, $[2,3]$, and $[4,5]$, it is seen that \eqref{eq:kissingcontrolgammatildehx} holds when $L$ is sufficiently large (depending on $\sigma$ and $\mathfrak{b}_{\pm1}$).
\end{proof}
By a perturbative argument we now show that the same statement holds for \eqref{eq:odegammahx}.
\begin{lemma}
    \label{lem:kissingcontrolgammahx}
    \cref{lem:kissingcontrolgammatildehx} still holds when $\tilde{\gamma}$ is replaced by $\gamma$ in \eqref{eq:kissingcontrolgammatildehx}.
\end{lemma}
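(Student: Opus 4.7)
The plan is to deduce this from \cref{lem:kissingcontrolgammatildehx} by a time-rescaling trick that renders the additional drift in \eqref{eq:odegammahx} compared to \eqref{eq:odegammatildehx}---namely $c + \sigma^2(\mathfrak{a} - \tfrac{1}{2}\sum_k \mathfrak{b}_k'\mathfrak{b}_k)$---negligibly small. First I would apply \cref{lem:kissingcontrolgammatildehx} with $\eps/2$ in place of $\eps$ to obtain $\tilde T > 0$ and $\tilde h \in \cH$ witnessing \eqref{eq:kissingcontrolgammatildehx} for $\tilde{\gamma}$. For a parameter $\lambda > 0$ to be chosen large, I introduce the rescaled control $h_\lambda(t) \coloneq \lambda \tilde h(\lambda t)$ (again an element of $\cH$) and the shortened time $T_\lambda \coloneq \tilde T / \lambda$.

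A direct chain-rule computation shows that $s \mapsto \tilde{\gamma}^x_{h_\lambda}(s/\lambda)$ satisfies precisely the ODE of $\tilde{\gamma}^x_{\tilde h}$, so $\tilde{\gamma}^x_{h_\lambda}(k T_\lambda) = \tilde{\gamma}^x_{\tilde h}(k \tilde T)$ for $k = 1,3,5$ and \eqref{eq:kissingcontrolgammatildehx} holds with tolerance $\eps/2$ for $\tilde{\gamma}_{h_\lambda}$. Applying the same rescaling to \eqref{eq:odegammahx}, the function $z^x(s) \coloneq \gamma^x_{h_\lambda}(s/\lambda)$ satisfies the same ODE as $\tilde{\gamma}^x_{\tilde h}$ plus an extra drift term $\lambda^{-1}[c + \sigma^2(\mathfrak{a}(z^x) - \tfrac{1}{2}\sum_k \mathfrak{b}_k'(z^x)\mathfrak{b}_k(z^x))]$. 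By \cref{prop:frakabreg2} and periodicity this perturbation is uniformly bounded on $\bbR$, while the coefficients of the common ODE are Lipschitz, so Gronwall's inequality gives
\begin{equation*}
    \sup_{s \in [0, 5\tilde T]} \abs{z^x(s) - \tilde{\gamma}^x_{\tilde h}(s)} \leq C/\lambda
\end{equation*}
for a constant $C$ independent of $\lambda$ and of $x \in \cur{z_1,z_2,z_3}$. Taking $\lambda$ large enough that $C/\lambda \leq \eps/2$, the triangle inequality yields the desired tolerance $\eps$ at the three comparison times $T_\lambda, 3T_\lambda, 5T_\lambda$.

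The strict inequalities and the $+1$ winding terms in \eqref{eq:kissingcontrolgammatildehx} are handled by lifting everything to $\bbR$: since $h_\lambda$ depends only on $t$, both ODEs have well-defined lifts, the rescaling and Gronwall arguments carry over verbatim, and the strict orderings among $\gamma^{z_1}_{h_\lambda}, \gamma^{z_2}_{h_\lambda}, \gamma^{z_3}_{h_\lambda}$ follow automatically from scalar-ODE comparison applied to the lifted equation. The main conceptual point---and the reason this simple trick works---is that the time rescaling shrinks the uncontrollable drift by a factor $\lambda^{-1}$ while leaving the $\mathfrak{b}$-driven squeezing dynamics invariant, so the squeezing cycle from \cref{lem:kissingcontrolgammatildehx} is inherited by $\gamma_{h_\lambda}$ once $\lambda$ is taken sufficiently large.
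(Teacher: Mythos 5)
Your proof is correct and uses essentially the same time-rescaling idea as the paper: compress the control (with $\lambda = \delta^{-1}$ in the paper's notation) so that the extra drift $c + \sigma^2(\mathfrak a - \tfrac12\sum_k \mathfrak b_k'\mathfrak b_k)$ in \eqref{eq:odegammahx} contributes only $\mathcal O(\lambda^{-1})$ to the solution, then control the error by Gr\"onwall and pass to the limit. The only differences are cosmetic: you perform the Gr\"onwall estimate in the rescaled time variable $s \in [0,5\tilde T]$, whereas the paper carries it out in the original variable $t \in [0,\delta T]$ (the two bounds coincide after substituting $\lambda = \delta^{-1}$), and you start from tolerance $\eps/2$ while the paper finishes with tolerance $2\eps$; both yield the conclusion since $\eps$ is arbitrary. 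One small remark: your closing paragraph about ``lifting to $\bbR$'' is unnecessary, since both \eqref{eq:odegammatildehx} and \eqref{eq:odegammahx} are already posed as ODEs on $\bbR$ (see the sentence introducing them) and the strict inner inequalities in \eqref{eq:kissingcontrolgammatildehx} follow directly from uniqueness/comparison for scalar ODEs; that point is correct, just stated as if something extra were needed.
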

\begin{proof}
    Fix $\eps > 0$ and let $T>0,h\in\cH$ be such that \eqref{eq:kissingcontrolgammatildehx} is satisfied.
    For $\delta > 0$ we define $h_\delta$ via $h_\delta(\cdot) = \delta^{-1} h(\delta^{-1}\,\cdot)$.
    By rescaling time in \eqref{eq:odegammatildehx} we see that
    \begin{equation}
        \label{eq:gammatildehxrescale}
        \tilde{\gamma}^x_{h_\delta}(\cdot) = \tilde{\gamma}^x_h(\delta^{-1}\,\cdot), \qquad x \in \bbR,
    \end{equation}
    for any $\delta > 0$.
    Furthermore, subtracting \eqref{eq:odegammatildehx} from \eqref{eq:odegammahx} and using \cref{prop:frakabreg2} we find that there exists $C > 0$ (depending only on $\sigma$, $c$, $\mathfrak{a}$, $\mathfrak{b}$) such that
    \begin{equation*}
        \abs{\gamma^x_{h_\delta}(t)-\tilde{\gamma}^x_{h_\delta}(t)} 
        \leq Ct + C \delta^{-1} \int_0^t \abs{\gamma^x_{h_\delta}(s)-\tilde{\gamma}^x_{h_\delta}(s)}\rd s, \qquad x \in \bbR,\,t \geq 0,\,\delta > 0.
    \end{equation*}
    It follows from Gr\"onwall's lemma that
    \begin{equation*}
        \sup_{t \in [0,\delta T]} \abs{\gamma^x_{h_\delta}(t),\tilde{\gamma}^x_{h_\delta}(t)} \leq \delta CT\exp(CT), \qquad \delta > 0.
    \end{equation*}
    Choosing $\delta$ sufficiently small (depending only on $\eps,C,T$) and 
    combining this with \eqref{eq:kissingcontrolgammatildehx} and \eqref{eq:gammatildehxrescale}, we see that \eqref{eq:kissingcontrolgammatildehx} is still satisfied with $\tilde{\gamma}$ replaced by $\gamma$, $T$ replaced by $\delta T$, $h$ replaced by $h_{\delta}$, and $\eps$ replaced by $2\eps$.
    The conclusion follows since $\eps$ was arbitrary.
\end{proof}

Finally, we show that $\PP{B_0}$ has positive probability using monotonicity of \eqref{eq:sdegammasimp} and the Stroock--Varadhan support theorem.

\begin{lemma}
    \label{lem:squeezinghappens}
    Let $\eps > 0$ and let $T$ be as in \cref{lem:kissingcontrolgammahx}.
    Then $\PP{B_0} > 0$.
\end{lemma}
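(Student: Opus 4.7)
The plan is to exhibit a single realization-type event, depending only on the three-point motion started at $z_1, z_2, z_3$, which forces $B_0$ to occur via monotonicity and pigeonhole. Positivity of its probability then follows from the Stroock--Varadhan support theorem.

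Fix $\eps>0$ and let $T, h\in\cH$ be the objects produced by \cref{lem:kissingcontrolgammahx}, so that the three controlled trajectories $\gamma^{z_i}_h$, $i\in\{1,2,3\}$, satisfy \eqref{eq:kissingcontrolgammatildehx} (with $\gamma$ in place of $\tilde\gamma$). Choose some tolerance $\eps'\in(0,\eps)$ to be fixed in a moment. The support theorem for diffusions \cite{stroock_support_1972}, applied on $\bbR^3$ to the three-point motion driven by the two effective controls $h_1,h_{-1}$ (the other noise channels do not affect the argument and can be absorbed into the drift for the support statement), yields that the event
\begin{equation*}
    A \coloneq \Bigl\{\sup_{t\in[0,5T]}\,\max_{i\in\{1,2,3\}} d\bigl(\phi_\sigma(t,\omega,z_i),\gamma^{z_i}_h(t)\bigr)\le \eps'\Bigr\}
\end{equation*}
has strictly positive $\bbP$-probability.

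Next I would exploit monotonicity. By \cref{prop:rds}\ref{it:rds:c2} the lifted cocycle $x\mapsto \phi_\sigma(t,\omega,x)$ is a $C^2$-diffeomorphism of $\bbR$, hence strictly increasing; projecting to $\bbT$ preserves the cyclic order of any triple of points. Consequently, on the event $A$, for any two initial points $x_1,x_2$ lying in the closed arc from $z_3$ to $z_2$ that contains $z_1$, the images $\phi_\sigma(T,\omega,x_j)$ lie (in cyclic order) between $\phi_\sigma(T,\omega,z_3)$ and $\phi_\sigma(T,\omega,z_2)$, which by \eqref{eq:kissingcontrolgammatildehx} and the definition of $A$ are both within $\eps+\eps'\le 2\eps$ of $z_1$. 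Thus $d(\phi_\sigma(T,\omega,x_1),\phi_\sigma(T,\omega,x_2))\le 4\eps$ for any such pair. The same conclusion holds near $z_2$ at time $3T$ (for pairs in the arc from $z_1$ to $z_3$ through $z_2$) and near $z_3$ at time $5T$ (for pairs in the arc from $z_2$ to $z_1$ through $z_3$).

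Finally I invoke pigeonhole on the torus: the three arcs just identified each have length $2/3$, and each omits exactly one of the three thirds of $\bbT$ determined by $\{z_1,z_2,z_3\}$. Given any pair $x_1,x_2\in\bbT$, at most two of these thirds can contain one of the two points, so there is a third containing neither, i.e.\ an arc containing both. Hence for every pair $(x_1,x_2)$ we find $t_*\in\{T,3T,5T\}$ with $d(\phi_\sigma(t_*,\omega,x_1),\phi_\sigma(t_*,\omega,x_2))\le 4\eps$, which shows $A\subset B_0$ and therefore $\bbP[B_0]\ge \bbP[A]>0$.

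The main obstacle I anticipate is formulating the support theorem application cleanly for the degenerate diffusion $(\phi_\sigma(\cdot,\omega,z_1),\phi_\sigma(\cdot,\omega,z_2),\phi_\sigma(\cdot,\omega,z_3))$ on $\bbR^3$ (or $\bbT^3$), since only the $\beta_{\pm 1}$ channels are used in the control from \cref{lem:kissingcontrolgammahx}; once one invokes a version of the support theorem that handles this (e.g.\ \cite[Theorem 3.5]{millet_simple_1994}, as already used in \cref{prop:pwsst}), the rest is an order-preservation plus pigeonhole argument.
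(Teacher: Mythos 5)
Your proposal is correct and follows essentially the same route as the paper: apply the support theorem to the three-point motion started at $z_1,z_2,z_3$ to make the controlled squeezing cycle of \cref{lem:kissingcontrolgammahx} occur with positive probability, then combine monotonicity of the lifted cocycle on $\bbR$ with a pigeonhole argument over the three overlapping arcs of length $2/3$ to conclude $A\subset B_0$. The only cosmetic difference is that you state $A$ as a sup-norm tube around the controlled trajectory while the paper phrases it directly as the set-inclusion event; also, the side remark that the other noise channels ``can be absorbed into the drift'' is slightly off — in the support theorem one simply takes the corresponding controls to be zero — but this does not affect the argument.
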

\begin{proof}
    Applying the support theorem to the three-point motion $(\phi_{\sigma}(t,\omega,z_i))_{i \in \cur{1,2,3}}$ and using \cref{lem:kissingcontrolgammahx}, we see that there is a strictly positive probability that \eqref{eq:kissingcontrolgammatildehx} holds with $\tilde{\gamma}^{z_i}_h(T)$ replaced by $\phi_{\sigma}(T,\omega,z_i)$ everywhere.
    By monotonicity it then follows that the event
    \begin{align*}
        A \coloneq &\cur[\big]{\phi_{\sigma}(T,\omega,[z_3,z_2]) \subset B_{2\eps}(z_1)}
        \cap \cur[\big]{\phi_{\sigma}(3T,\omega,[z_1,z_3+1]) \subset B_{2\eps}(z_2)} \\
        &\qquad\cap \cur[\big]{\phi_{\sigma}(5T,\omega,[z_2-1,z_1]) \subset B_{2\eps}(z_3)}
    \end{align*}
    (where $\phi_{\sigma}$ is interpreted as an RDS on $\bbR$)
    has positive probability.
    Now observe that for any two-point set $S \subset \bbT$ it must be the case that either $S \subset [z_3,z_2]$, $S \subset [z_1,z_3+1]$, or $S \subset [z_2-1,z_1]$ (for properly chosen representatives of $S$) by the pigeonhole principle.
    Thus, the event $A$ implies $B_0$ so that $\PP{B_0} \geq \PP{A} > 0$.
\end{proof}
\begin{proposition}
    \label{prop:meeting}
    The meeting condition \eqref{eq:meeting} holds true.
\end{proposition}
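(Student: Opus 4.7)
The plan is to execute exactly the Borel--Cantelli strategy set up in the three bullet points preceding \cref{lem:squeezinghappens}, closing the argument once $\PP{B_0} > 0$ has been established. Fix $x_1,x_2 \in \bbT$ and $\eps > 0$, and let $T > 0$ be the time provided by \cref{lem:squeezinghappens}, so that $\PP{B_0} > 0$. The three listed properties of $B_n$ already give us everything we need: independence of $(B_n)_{n \in \bbN_0}$ from the disjoint $\cF$-measurability, $\PP{B_n} = \PP{B_0}$ from invariance of $\bbP$ under $\theta$, and hence $\sum_{n \in \bbN_0} \PP{B_n} = \infty$. The second Borel--Cantelli lemma yields $\PP{B_n \text{ i.o.}} = 1$.

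Next I would transfer the occurrence of $B_n$ into a statement about the original trajectories starting at $x_1, x_2$. By the cocycle property (\cref{prop:rds}), for any $s \in [0,5T]$ we have
\begin{equation*}
    \phi_{\sigma}(5nT+s,\omega,x_i) = \phi_{\sigma}(s,\theta_{5nT}\omega, \phi_{\sigma}(5nT,\omega,x_i)), \qquad i \in \cur{1,2}.
\end{equation*}
On the event $B_n$, the supremum inside \eqref{eq:defmeetingevent} is attained uniformly over initial conditions in $\bbT$, so applying it to $y_i \coloneq \phi_{\sigma}(5nT,\omega,x_i)$ yields
\begin{equation*}
    \inf_{s \in [0,5T]} d\bra[\big]{\phi_{\sigma}(5nT+s,\omega,x_1), \phi_{\sigma}(5nT+s,\omega,x_2)} \leq 4\eps.
\end{equation*}
Consequently, on the almost sure event $\cur{B_n \text{ i.o.}}$ we obtain $\liminf_{t \to \infty} d(\phi_{\sigma}(t,\omega,x_1),\phi_{\sigma}(t,\omega,x_2)) \leq 4\eps$.

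Finally, intersecting over a countable sequence $\eps_m \searrow 0$ gives
\begin{equation*}
    \PP[\Big]{\liminf_{t \to \infty} d(\phi_{\sigma}(t,\omega,x_1),\phi_{\sigma}(t,\omega,x_2)) = 0} = 1,
\end{equation*}
which is precisely \eqref{eq:meeting}. No step here is expected to be a genuine obstacle since the hard work has been done in \cref{lem:kissingcontrolgammatildehx,lem:kissingcontrolgammahx,lem:squeezinghappens}; the only point that requires a little care is verifying that the uniform-in-initial-condition formulation of $B_n$ is actually what feeds the cocycle identity, and this is exactly why $B_n$ was defined with a supremum over all $x_1,x_2 \in \bbT$ rather than pointwise.
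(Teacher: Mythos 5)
Your proposal is correct and follows exactly the paper's own argument: invoke \cref{lem:squeezinghappens} to get $\PP{B_0} > 0$, apply the second Borel--Cantelli lemma using the independence and stationarity observations preceding the lemma, and then send $\eps \searrow 0$. The only difference is that you spell out the cocycle-property step underlying the third bullet point, which the paper states without proof; this is a helpful clarification but not a different route.
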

\begin{proof}
    Fix $\eps > 0$.
    From the second Borel--Cantelli lemma, \cref{lem:squeezinghappens} and the observations directly following \eqref{eq:defmeetingevent} we see that
    \begin{equation*}
        \PP[\Big]{\liminf_{t \to \infty} d(\phi_{\sigma}(t,\omega,x_1),\phi_{\sigma}(t,\omega,x_2)) \leq 4\eps} = \PP{B_n \text{ infinitely often}} = 1.    
    \end{equation*}
    The claim follows since $\eps$ was arbitrary.
\end{proof}

\subsection{Uniform weak synchronization}
\label{subsec:sdesync}
From this point onward we shall exclusively view $\phi_{\sigma}$ as an RDS on $\bbT$.
\begin{theorem}
    \label{thm:phisigmasync}
    The RDS $\phi_{\sigma}$ synchronizes weakly, i.e., there is a minimal weak point attractor $a(\omega)$ consisting of a single point.
    Moreover, for any $\eps > 0$ we have
    \begin{equation}
        \label{eq:uniformsync}
        \lim_{t \to \infty}\sup_{x\in \bbT} \PP[\big]{d(\phi_{\sigma}(t,\omega,x),a(\theta_t\omega)) > \eps} = 0,
    \end{equation}
    i.e., the synchronization rate is uniform in the initial condition.
\end{theorem}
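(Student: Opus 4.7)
The first statement, existence of a minimal single-point weak attractor, follows by invoking the abstract synchronization theorem of \textcite{flandoli_synchronization_2017}: all four of their hypotheses have been verified in the preceding subsections, namely asymptotic stability (\cref{cor:asymstable}), pointwise strong swift transitivity (\cref{prop:pwsst}), the meeting condition (\cref{prop:meeting}), and strong mixing (\cref{prop:mixing}). This immediately yields the pointwise convergence $\PP{d(\phi_\sigma(t,\omega,x),a(\theta_t\omega)) > \eps} \to 0$ for every fixed $x \in \bbT$, which is the non-uniform version of \eqref{eq:uniformsync}.

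To obtain uniformity in $x$, my plan is to reduce \eqref{eq:uniformsync} to a uniform \emph{two-point} synchronization statement. Indeed, for any reference point $x_0 \in \bbT$, the triangle inequality gives
\begin{equation*}
    \sup_{x \in \bbT}\PP{d(\phi_\sigma(t,\omega,x), a(\theta_t\omega)) > \eps} \leq \sup_{x \in \bbT}\PP{d(\phi_\sigma(t,\omega,x), \phi_\sigma(t,\omega,x_0)) > \tfrac{\eps}{2}} + \PP{d(\phi_\sigma(t,\omega,x_0), a(\theta_t\omega)) > \tfrac{\eps}{2}}.
\end{equation*}
The second term vanishes as $t \to \infty$ by the first part, so it suffices to establish the uniform two-point estimate $\sup_{y_1,y_2 \in \bbT}\PP{d(\phi_\sigma(t,\omega,y_1),\phi_\sigma(t,\omega,y_2)) > \eps} \to 0$.

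The key observation for the two-point estimate is that the proof of \cref{prop:meeting} already encodes a \emph{uniform} meeting: for $\eps_0 < \eps/4$ and $T$ chosen via \cref{lem:kissingcontrolgammahx}, the events $B_n$ in \eqref{eq:defmeetingevent} (with $\eps$ replaced by $\eps_0$) are i.i.d.\ with $\PP{B_n} \geq p_0 > 0$ by \cref{lem:squeezinghappens}, and on $B_n$ \emph{every} pair of trajectories comes within $4\eps_0$ during the window $[5nT, 5(n+1)T]$. Hence the uniform meeting time $\tau \coloneq \inf\cur*{t : \sup_{y_1,y_2}\inf_{s \in [0,t]}d(\phi_\sigma(s,\omega,y_1),\phi_\sigma(s,\omega,y_2)) \leq 4\eps_0}$ has uniform exponential tails, $\PP{\tau > 5NT} \leq (1-p_0)^N$. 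Applying the strong Markov property at $\tau$ reduces the remaining task to a \emph{local} uniform synchronization, $\sup_{d(z_1,z_2) \leq 4\eps_0}\PP{d(\phi_\sigma(t,\omega,z_1),\phi_\sigma(t,\omega,z_2)) > \eps} \to 0$.

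The main difficulty lies in this last local uniform step: pointwise convergence is given by the first part of the theorem, but promoting it to uniform convergence on the compact set $\cur{(z_1,z_2) : d(z_1,z_2) \leq 4\eps_0}$ requires equicontinuity of the two-point Markov semigroup, which is not immediate from the results at hand. I plan to obtain it by combining the strong Feller property of $P_t$ (guaranteed by the ellipticity from \cref{lem:sinspan}) with the exponential mixing from \cref{prop:mixing}, so that after an additional mixing time both trajectories can be coupled through the common invariant measure $\mu$. Alternatively, since $\eps_0$ may be chosen arbitrarily small, I expect a perturbative estimate exploiting the negative Lyapunov exponent \eqref{eq:sdelyapunov} to control $\EE{d(\phi_\sigma(t,\omega,z_1),\phi_\sigma(t,\omega,z_2))}$ for $d(z_1,z_2)$ small and conclude via Markov's inequality.
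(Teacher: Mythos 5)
Your first paragraph matches the paper exactly: the existence of the single-point weak attractor follows from \textcite{flandoli_synchronization_2017} together with \cref{cor:asymstable,prop:pwsst,prop:meeting,prop:mixing}, and the pointwise convergence is then immediate.

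For the uniformity claim, however, your argument is genuinely different from the paper's, and it contains a gap that you yourself acknowledge has not been closed. You decompose the problem into (a) a uniform meeting-time estimate (which you extract, correctly, from the squeezing construction of \cref{prop:meeting} and \cref{lem:squeezinghappens}), and (b) a \emph{local} uniform two-point synchronization estimate $\sup_{d(z_1,z_2) \leq 4\eps_0}\PP{d(\phi_\sigma(t,\omega,z_1),\phi_\sigma(t,\omega,z_2)) > \eps} \to 0$. Step (b) is exactly the hard part, and neither of your two suggested routes (strong Feller plus coupling, or a perturbative Lyapunov bound) is carried out. Both are plausible but nontrivial: the negative Lyapunov exponent is an asymptotic, $\mu$-a.e.\ statement and does not directly give a uniform contraction estimate in finite time on a neighborhood of the diagonal, while the coupling route needs equicontinuity of the two-point semigroup which you have not established. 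There is also a second, smaller issue: the stopping time $\tau$ you define records when \emph{every} pair has met \emph{at some} earlier, pair-dependent time, so it is not the case that at time $\tau$ all pairs are simultaneously within $4\eps_0$; to apply the strong Markov property one would need pair-dependent stopping times $\tau_{y_1,y_2}$ (whose tails are still uniform), which again funnels into the unresolved local step.

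The paper sidesteps the two-point motion entirely. It fixes a reference $z \in \bbT$, introduces a bounded continuous one-point observable $f(x) = \EE{\psi(\eps^{-1}d(\phi_\sigma(t_2,\omega,x),\phi_\sigma(t_2,\omega,z)))}$ with $\int_\bbT f\rd\mu$ small (using pointwise synchronization plus dominated convergence to choose $t_2$), and then applies the uniform exponential mixing \eqref{eq:uniformmixing} of the \emph{one-point} semigroup $P_t$ to get $\sup_x |P_s f(x)| \leq 2\delta$ for $s$ large. The cocycle property, invariance of $\bbP$ under $\theta$, and the independence in \cref{prop:rds}-\ref{it:rds:white} then convert this into the desired uniform bound at time $s + t_2$. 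This is both shorter and avoids any local contraction estimate; the only place where strong mixing is used is precisely in this step, as noted in \cref{rem:TV}. If you want to pursue your route, you would have to prove the local uniform synchronization, which is comparable in difficulty to (or harder than) the argument you are trying to replace.
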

\begin{proof}
    Taking \cref{prop:rds,prop:mixing,cor:asymstable,prop:pwsst,prop:meeting} together, we see that all the conditions of \cite[Theorem 2.23]{flandoli_synchronization_2017} are satisfied and weak synchronization holds.
    
    To show uniform synchronization we will additionally use that the strong mixing rate of $\phi_{\sigma}$ is uniform in the initial condition (see \cref{prop:mixing,rem:TV}).
    Fix $\eps,\delta > 0$ and let $z \in \bbT$ be arbitrary.
    Weak synchronization implies
    \begin{equation*}
        \lim_{t \to \infty}\PP{d(\phi_{\sigma}(t,\omega,y),\phi(t,\omega,z)) > \eps} = 0, \quad y \in \bbT,
    \end{equation*}
    so by dominated convergence we can find a time $t_2 > 0$ such that
    \begin{align}
        \label{eq:unifsyncPmu}
        \int_{\bbT} \PP{d(\phi_{\sigma}(t_2,\omega,y),\phi_{\sigma}(t_2,\omega,z)) > \eps} \rd \mu(y) \leq \delta, \\
        \label{eq:unifsynczatheta}
        \PP{d(\phi(t_2,\omega,z),a(\theta_{t_2}\omega)) > \eps} \leq \delta.
    \end{align}
    Using the auxiliary function $\psi\colon x \mapsto (x-1)\ind_{[1,2]}(x) + \ind_{(2,\infty)}(x)$ we now define
    \begin{equation*}
        f(x) = \EE{\psi(\eps^{-1}d(\phi_{\sigma}(t_2,\omega,x),\phi_{\sigma}(t_2,\omega,z)))}.
    \end{equation*}
    Notice that $f \in C_b(\bbT)$ by continuity of $\phi_{\sigma},\psi$ and dominated convergence, and also that
    \begin{equation}
        \label{eq:unifsyncfbound}
     \PP{d(\phi_{\sigma}(t_2,\omega,x),\phi_{\sigma}(t_2,\omega,z)) > 2\eps} \leq f(x) \leq \PP{d(\phi_{\sigma}(t_2,\omega,x),\phi_{\sigma}(t_2,\omega,z)) > \eps}
    \end{equation}
    for every $x \in \bbT$, which implies $\int_{\bbT}f \rd \mu \leq \delta$ by \eqref{eq:unifsyncPmu}.
    By \eqref{eq:uniformmixing} we then find a time $t_1 > 0$ such that
    \begin{equation}
        \label{eq:unifsyncPtbound}
        \sup_{x \in \bbT} \abs{P_t f(x)} \leq \delta + \int_{\bbT} f \rd \mu
        \leq 2 \delta,\qquad t \geq t_1.
    \end{equation}
    Now for $t = s + t_2$ with $s \geq t_1$ we find by the cocycle property and invariance of $\bbP$:
    \begin{align*}
        &\PP[\big]{d(\phi_{\sigma}(s+t_2,\omega,x),\phi_{\sigma}(t_2,\theta_{s}\omega,z)) > 2\eps}
        =\PP[\big]{d(\phi_{\sigma}(t_2,\omega,\phi_{\sigma}(s,\theta_{-s}\omega,x)),\phi_{\sigma}(t_2,\omega,z)) > 2\eps} \\
        &\qquad= P_s\bra[\Big]{ \PP[\big]{d(\phi_{\sigma}(t_2,\omega,\cdot),\phi_{\sigma}(t_2,\omega,z)) > 2\eps}}(x)
        \overset{\eqref{eq:unifsyncfbound}}{\leq} \sup_{y \in \bbT} \abs{P_s f(y)} \overset{\eqref{eq:unifsyncPtbound}}{\leq} 2 \delta, \qquad x \in \bbT,
    \end{align*}
    where the second identity follows since $\phi(s,\theta_{-s}\omega,x)$ is independent of $\phi_{\sigma}(t_2,\omega,\cdot)$ by \cref{prop:rds}-\ref{it:rds:white}.
    Combining this with the triangle inequality and using the invariance of $\bbP$ again yields
    \begin{align*}
        &\PP[\big]{d(\phi_{\sigma}(s+t_2,\omega,x),a(\theta_{s+t_2}\omega)) > 3\eps} 
        \leq 2\delta + \PP[\big]{d(\phi_{\sigma}(t_2,\theta_{s}\omega,z),a(\theta_{t_2+s}\omega)) > \eps} \\
        &\qquad\qquad= 2\delta + \PP[\big]{d(\phi_{\sigma}(t_2,\omega,z),a(\theta_{t_2}\omega)) > \eps}
        \overset{\eqref{eq:unifsynczatheta}}{\leq} 3\delta
    \end{align*}
    for all $s \geq t_1$ and $x \in \bbT$.
    Since $t_1$, $t_2$ were chosen independently of $x$, \eqref{eq:uniformsync} follows.
\end{proof}

To prove the main result, it only remains to transfer the synchronization properties of $\phi_{\sigma}$ back to $u_{\sigma}$.
However, here we encounter a limitation of \cref{thm:phisigmasync}, namely that \eqref{eq:uniformsync} does not provide a quantitative lower bound on the synchronization time in terms of $\sigma$.
This poses a problem, since \cref{thm:longtermapprox} shows that the validity of the approximation $\iso(u_{\sigma}^x(t)) \approx \phi_{\sigma}(t,\omega,x)$ is only valid on a time scale $t \sim \sigma^{-2}\log(\sigma^{-1})$.
Hence, if the time until synchronization of $\phi_{\sigma}$ diverges quicker than this as $\sigma \searrow 0$, the synchronization behavior cannot be transferred to $u_{\sigma}$.
Of course, the characteristic time scale of \eqref{eq:sdegammasimp} is $t \sim \sigma^{-2}$, so it is reasonable to expect that the synchronization should take place before the approximation breaks down.
However, since the term $c \rd t$ in \eqref{eq:sdegammasimp} breaks the time-scaling symmetry, we are currently not able to prove this without imposing \cref{ass:scaling} (note that this assumption has been unused until now).
We believe that a quantitative version of \cite[Theorem 2.23]{flandoli_synchronization_2017} could mitigate the need for \cref{ass:scaling}, and leave this as a suggestion for future work.

To overcome the difficulties outlined above, we relate \eqref{eq:sdegammasimp} to the following SDE in which $\sigma$ no longer appears:
\begin{equation}
    \label{eq:sdegammatilde}
    \rd \tilde{\gamma} = \mathfrak{a}(\tilde{\gamma})\rd t + \sum_{k \in \bbZ}\mathfrak{b}_k(\tilde{\gamma}) \rd \beta_k(t).
\end{equation}
Note that if $c = 0$, \eqref{eq:sdegammatilde} can be directly obtained from \eqref{eq:sdegammasimp} by rescaling time.
In the case $c \neq 0$ an additional step is needed.
Throughout the following, we let $\tilde{\phi}$ denote the random dynamical system over $(\theta_t)_{t \in \bbR}$ generated by \eqref{eq:sdegammatilde}.
The following proposition gives an exact relation between $\phi_{\sigma}$ and $\tilde{\phi}$.

\begin{proposition}
    \label{prop:gammarescale}
    For every $\sigma > 0$, there is a map $T \colon \Omega \to \Omega$ which satisfies $T^*\bbP = \bbP$ as well as
    \begin{equation}
        \label{eq:rdsrescale}
        \PP[\Big]{\phi_{\sigma}(t,\omega,x) = ct + \tilde{\phi}(\sigma^2t,T(\omega),x)} = 1, \qquad t \in \bbR^+,\,x \in \bbT.
    \end{equation}
\end{proposition}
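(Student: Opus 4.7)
The plan is to construct $T$ by combining two $\bbP$-preserving transformations of $\Omega$: a rotation $T_1$ that eliminates the drift $c$, and a Brownian rescaling $T_2$ that absorbs the factor $\sigma^2$. In the case $c = 0$, only the rescaling is needed. Defining $(T_2 \omega)(s,k) \coloneq \sigma \omega(s/\sigma^2, k)$ preserves $\bbP$ by Brownian self-similarity, and a time change in the stochastic integral shows that $\eta(s) \coloneq \gamma_\sigma(s/\sigma^2)$ solves \eqref{eq:sdegammatilde} driven by the Brownian motions $\tilde{\beta}_k(s) \coloneq \sigma \beta_k(s/\sigma^2)$ encoded in $T_2 \omega$. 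Pathwise uniqueness for \eqref{eq:sdegammatilde} then yields $\eta(s) = \tilde{\phi}(s, T_2 \omega, x)$ almost surely, which is \eqref{eq:rdsrescale} with $T = T_2$.

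In the remaining case $\alpha_k = \alpha_{-k}$ for every $k \in \bbN$, \cref{prop:frakabsym} guarantees that $\mathfrak{a}$ is constant, so the equation for $\eta(t) \coloneq \gamma_\sigma(t) - ct$ has drift $\sigma^2 \mathfrak{a}(\eta)$. For the noise we start from the decomposition \eqref{eq:simplenoise} together with the factorisation $R(-2\pi k(ct + \eta)) = R(-2\pi k\eta)\,R(-2\pi kct)$. The crucial point is that the coefficient matrix $\mathrm{diag}(\alpha_k, \alpha_{-k})$ is a scalar multiple of the identity and therefore commutes with $R(-2\pi kct)$, which lets us absorb this deterministic rotation into the noise. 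Concretely, setting $\beta'_0 \coloneq \beta_0$ and
\begin{equation*}
    \begin{pmatrix} \beta'_k(t) \\ \beta'_{-k}(t) \end{pmatrix}
    \coloneq \int_0^t R(-2\pi kcs) \begin{pmatrix} \rd \beta_k(s) \\ \rd \beta_{-k}(s) \end{pmatrix}, \qquad k \in \bbN,
\end{equation*}
L\'evy's characterisation shows that the $(\beta'_k)_{k \in \bbZ}$ form a family of independent standard Brownian motions, so the induced map $T_1 \colon (\beta_k)_k \mapsto (\beta'_k)_k$ preserves $\bbP$. By construction, $\eta$ then solves the $c = 0$ version of \eqref{eq:sdegammasimp} relative to $(\beta'_k)_k$, and applying the first step to this reduced equation gives \eqref{eq:rdsrescale} with $T = T_2 \circ T_1$.

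The main obstacle is the rotation step in the second case: the manipulation hinges on having a deterministic, time-dependent rotation that can be absorbed into the noise without breaking independence of the Brownian motions, and this works only because the amplitudes $\alpha_{\pm k}$ coincide (so that the diagonal coefficient matrix is scalar) \emph{and} because $\mathfrak{a}$ is constant (so that shifting $\gamma_\sigma$ by $ct$ leaves the drift unchanged). Both of these properties follow from $\alpha_k = \alpha_{-k}$ via \cref{prop:frakabsym}, and this is precisely why \cref{ass:scaling} is needed here. Measurability of $T_1$ and its $\bbP$-preservation are then standard consequences of L\'evy's characterisation applied to the stochastic integrals above, with any extension from a full-measure set to all of $\Omega$ being irrelevant for \eqref{eq:rdsrescale}.
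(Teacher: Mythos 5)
Your proof is correct and follows essentially the same route as the paper: scale Brownian motions to dispose of $\sigma$ when $c=0$, and when $c\neq 0$ use the spatial homogeneity of the noise ($\alpha_k = \alpha_{-k}$) both to make $\mathfrak{a}$ constant and to absorb the deterministic rotation $R(-2\pi kct)$ into a new family of Brownian motions via L\'evy's characterisation, then compose the two measure-preserving maps. Your explicit remark that the commutation $\mathrm{diag}(\alpha_k,\alpha_{-k})R(-2\pi kct) = R(-2\pi kct)\,\mathrm{diag}(\alpha_k,\alpha_{-k})$ is what makes the absorption possible is a helpful clarification of a step the paper leaves implicit (by pulling the scalar $\alpha_k$ out in front before writing \eqref{eq:sdegammatildesimplenoise}), but the underlying argument is identical.
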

\begin{proof}
    Consider first the case $c = 0$.
    We set $\tilde{\beta}_k(\cdot) = \sigma \beta_k(\sigma^{-2} \cdot)$ and let
    $T_1$ be the associated transformation of $\Omega$.
    It is a basic property of Brownian motion that $T_1^*\bbP = \bbP$.
    By rescaling time in \eqref{eq:sdegammasimp} it is also seen that $\tilde{\gamma}(t) \coloneq \gamma(\sigma^{-2}t)$ solves \eqref{eq:sdegammatilde} with $\beta_k$ replaced by $\tilde{\beta}_k$, so that \eqref{eq:rdsrescale} holds.

    Consider now the case $c \neq 0$. 
    By \cref{ass:scaling} we must have $\alpha_k = \alpha_{-k}$ for every $k \in \bbN$.
    It then follows from \eqref{eq:frakabtiv} that $\mathfrak{a}(x)$ is constant, so we simply write $\mathfrak{a}$.
    Making the substitution $\tilde{\gamma}(t) \coloneq \gamma(t) - ct$, we combine \eqref{eq:sdegammasimp} and \eqref{eq:simplenoise} to find:
    \begin{align}
        \label{eq:sdegammatildesimplenoise}
        \diff \tilde{\gamma}
        &=\sigma^2 \mathfrak{a} \rd t + \sigma \bra[\bigg]{\mathfrak{b}_0 \rd \beta_0(t) +
            \sum_{k \in \bbN}\alpha_k\begin{pmatrix}
                c_k \\ c_{-k}
            \end{pmatrix}^{\top} R(-2\pi k\tilde{\gamma})
            R(-2\pi kct)
            \begin{pmatrix}
                \rd \beta_k \\ \rd \beta_{-k}
            \end{pmatrix}
        }
    \end{align}
    We now set $\tilde{\beta}_0 = \beta_0$, as well as
    \begin{align*}
        \begin{pmatrix}
            \tilde{\beta}_k(\cdot) \\
            \tilde{\beta}_{-k}(\cdot)
        \end{pmatrix} &= \int_0^{\cdot} R(-2\pi kcs)\begin{pmatrix}
            \rd \beta_k(s) \\ \rd \beta_{-k}(s)
        \end{pmatrix},\quad k \in \bbN,
    \end{align*}
    and let $T_2$ be the associated map on $\Omega$.
    It follows from Levy's characterization of the Brownian motion that $T_2^*\bbP = \bbP$.
    Moreover, it is seen that
    \begin{equation*}
        \begin{pmatrix}
            \diff \tilde{\beta}_k(t) \\ \diff \tilde{\beta}_{-k}(t)
        \end{pmatrix}
        = R(-2\pi kct) \begin{pmatrix}
            \diff \beta_k(t) \\ \diff \beta_{-k}(t)
        \end{pmatrix},
    \end{equation*}
    so that \eqref{eq:sdegammatildesimplenoise} becomes
    \begin{align*}
        \diff \tilde{\gamma} &= \sigma^2 \mathfrak{a}\rd t 
        +  \sigma \bra[\bigg]{\mathfrak{b}_0 \rd \tilde{\beta}_0(t) +
            \sum_{k \in \bbN}\alpha_k\begin{pmatrix}
                c_k \\ c_{-k}
            \end{pmatrix}^{\top} R(-2 \pi k\tilde{\gamma})
            \begin{pmatrix}
                \rd \tilde{\beta}_k \\ \rd \tilde{\beta}_{-k}
            \end{pmatrix}
        } \\
        &= \sigma^2 \mathfrak{a} \rd t + \sigma \sum_{k \in \bbZ}\mathfrak{b}_k(\tilde{\gamma}) \rd \tilde{\beta}_k.
    \end{align*}
    Recalling that $\gamma = \tilde{\gamma} + ct$ and using the result with $c=0$, we find a map $T_1 \colon \Omega \to \Omega$ which leaves $\bbP$ invariant and satisfies
    \begin{equation*}
        \phi_{\sigma}(t,\omega,x) = ct + \tilde{\phi}(\sigma^2 t, T_1 \circ T_2(\omega),x).
    \end{equation*}
    We conclude by noting that $(T_1 \circ T_2)^*\bbP = T_1^* (T_2^* \bbP) = \bbP$.
\end{proof}

\subsection{Proof of the main result}
\begin{proof}[Proof of \cref{thm:spdesync}]
    Let $(t_{\sigma})_{\sigma > 0}$ and $q$ be as in the theorem statement and fix $x,y \in \bbR$.
    By \eqref{eq:tsigmacond} and \cref{thm:longtermstab} (taking into account \cref{rem:translateustar}), it suffices to prove
    \begin{equation}
        \label{eq:dpixyconvprob}
        d(\iso(u_{\sigma}^x(t_{\sigma})), \iso(u_{\sigma}^y(t_{\sigma}))) \overset{\bbP}{\to} 0 \quad \text{as} \quad \sigma \to 0.
    \end{equation}
    Also by \eqref{eq:tsigmacond}, we can find a decomposition $t_{\sigma} = s_{\sigma} + c_{\sigma}\sigma^{-2}$ with $s_{\sigma} \geq 0$, and $c_{\sigma}$ satisfying
    \begin{equation}
        \label{eq:csigmacond}
        0 \leq c_{\sigma} \leq \log(\sigma^{-1})^{1-q/9},
        \qquad \lim_{\sigma \to 0} c_{\sigma} = \infty.
    \end{equation}
    It then follows from (a time-shifted version of) \cref{thm:longtermapprox} that
    \begin{equation}
        \label{eq:dpixphiconvprob}
        d(\iso(u_{\sigma}^x(t_{\sigma})), \phi_{\sigma}(c_{\sigma}\sigma^{-2},\theta_{s_{\sigma}}\omega,\iso(u_{\sigma}^x(s_{\sigma})))) \overset{\bbP}{\to} 0 \quad \text{as} \quad \sigma \to 0,
    \end{equation}
    and likewise for $y$.
    Hence, abbreviating $x_{s} = \iso(u^x_{\sigma}(s_{\sigma}))$ (and similarly for $y$) it will suffice to prove
    \begin{equation}
        \label{eq:dphixyconvprob}
        d(\phi_{\sigma}(c_{\sigma}\sigma^{-2},\theta_{s_{\sigma}}\omega,x_s),\phi_{\sigma}(c_{\sigma}\sigma^{-2},\theta_{s_{\sigma}}\omega,y_s)) \overset{\bbP}{\to} 0 \quad \text{as} \quad \sigma \to 0,
    \end{equation}
    since \eqref{eq:dpixyconvprob} then follows from \eqref{eq:dpixphiconvprob}-\eqref{eq:dphixyconvprob} and the triangle inequality.
    Fix now $\eps > 0$.
    Since $x_s$ and $y_s$ are both $\cF_{s_{\sigma}}$-measurable, they are both independent of the map $z \mapsto \phi_{\sigma}(c_{\sigma}\sigma^{-2},\theta_{s_{\sigma}}\omega,z)$ by \cref{prop:rds}-\ref{it:rds:white}.
    Thus, we find
    \begin{align*}
        &\PP{d(\phi_{\sigma}(c_{\sigma}\sigma^{-2},\theta_{s_{\sigma}}\omega,x_s),\phi_{\sigma}(c_{\sigma}\sigma^{-2},\theta_{s_{\sigma}}\omega,y_s)) > \eps}
        \\ &\quad\leq 
        \sup_{x,y \in \bbT}
        \PP{d(\phi_{\sigma}(c_{\sigma}\sigma^{-2},\theta_{s_{\sigma}}\omega,x),\phi_{\sigma}(c_{\sigma}\sigma^{-2},\theta_{s_{\sigma}}\omega,y)) > \eps} \\
        &\quad= \sup_{x,y \in \bbT}
        \PP{d(\phi_{\sigma}(c_{\sigma}\sigma^{-2},\omega,x),\phi_{\sigma}(c_{\sigma}\sigma^{-2},\omega,y)) > \eps} \\
        &\quad= \sup_{x,y \in \bbT}
        \PP{d(\tilde{\phi}(c_{\sigma},\omega,x),\tilde{\phi}(c_{\sigma},\omega,y)) > \eps}
    \end{align*}
    for every $\sigma > 0$,
    where we have used \cref{prop:gammarescale} for the final step.
    Using \eqref{eq:csigmacond} and \cref{thm:phisigmasync} (which applies equally well to $\tilde{\phi}$) we now conclude that \eqref{eq:dphixyconvprob} holds.
\end{proof}

\section{Outlook}
\label{sec:outlook}
We have shown that the phase reduction approach is an effective way to prove synchronization of traveling pulses.
We now briefly discuss some possible extensions of the result, including the possibility to remove \cref{ass:scaling}.

\subsection{Fixed noise amplitude}
Instead of considering the joint limit of small noise and long time as in \cref{thm:spdesync}, one could instead try to show synchronization for a fixed noise amplitude $\sigma > 0$.
In this case, analysis of the long-time behavior is complicated by the fact that the pulse is only metastable, and typically has a limited lifetime.
This may be remedied using the theory of \emph{quasi-ergodic} (i.e., conditioned on survival) measures, which have recently been shown to exist in a setting  similar to ours \cite{adams_quasiergodicity_2024}.
Furthermore, the existence of a conditioned Lyapunov exponent has been shown in \cite{engel_conditioned_2019}, which was recently extended to exhibit a full conditioned Lyapunov spectrum \cite{castro_lyapunov_2022}.
We believe this is an interesting avenue for future research, and expect that further developments of this theory will be helpful to show transient synchronization.

\subsection{Higher-dimensional patterns and chaos}
By the phase reduction method outlined in \cref{sec:phasereduction}, the dynamics of the SPDE \eqref{eq:spde} near the traveling pulse are essentially one-dimensional, and are accurately described by \eqref{eq:sdegammasimp}.
Thus, chaotic behavior is immediately ruled out by monotonicity.
However, the phase reduction method has also been applied to patterns with more degrees of freedom, such as rotating and spiral waves \cite{kuehn_stochastic_2022,vanwinden_noncommutative_2024} and multidimensional traveling waves \cite{vandenbosch_multidimensional_2024}.
In these cases the reduced SDEs are multidimensional, so chaotic behavior might occur as opposed to synchronization.
We expect that the sign of the top Lyapunov exponent (which commonly characterizes chaos/synchronization) could be determined using either analytical \cite{bedrossian_regularity_2022} or computer-assisted techniques \cite{breden_computerassisted_2023,breden_rigorous_2024}.
\subsection{Spatially inhomogeneous noise}
\label{subsec:liftscaling}
In the case where the pulse speed $c$ is nonzero, \cref{ass:scaling} restricts the noise to be statistically spatially homogeneous.
From a physical/symmetry perspective this assumption is not artificial or unreasonable.
Moreover, spatially homogeneous noise is frequently used in this setting; see for example \cite{gnann_solitary_2024,hamster_travelling_2020,westdorp_longtimescale_2024,kilpatrick_stochastic_2015}.
From a mathematical point of view, the extra symmetry simplifies many of the computations in \cref{sec:reducedsdeanalysis}.
Most notably, it results in the following:
    \begin{enumerate}
        \item The coefficient $\mathfrak{a}$ in \eqref{eq:sdegammasimp} does not depend on $x$. This also holds for $\norm{\mathfrak{b}}_{\ell^2(\bbZ)}$ and $\norm{\mathfrak{b}'}_{\ell^2(\bbZ)}$. 
        From \eqref{eq:sdegammasimp}, we may interpret the (deterministic) quantity $c+\sigma^2\mathfrak{a}$ as the stochastically corrected pulse speed (c.f.\ \cite[Theorem 2.3]{giacomin_small_2018}) \cite[\S 2.2]{hamster_stability_2020}  \cite[\S 4.2]{adams_isochronal_2025}.
        \item The invariant measure $\mu$ in \cref{prop:mixing} is the Lebesgue measure on $\bbT$.
        \item The Lyapunov exponent in \eqref{eq:sdelyapunov} satisfies $\lambda = -\frac{1}{2}\sigma^2 \norm{\mathfrak{b}'}_{\ell^2(\bbZ)}^2$.
    \end{enumerate}

We note that the validity of the phase reduction (\cref{thm:longtermstab,thm:longtermapprox}), as well as synchronization of the reduced SDE (\cref{thm:phisigmasync}) are all established without use of \cref{ass:scaling}.
This leads us to conjecture that \cref{ass:scaling} might not be needed for \cref{thm:spdesync} to hold.
However, without \cref{ass:scaling} we are currently unable to get suitable quantitative control of the synchronization rate, which is otherwise provided by \cref{prop:gammarescale}.
Hence, we cannot rule out the possibility that the validity of the approximation $\gamma_{\sigma} \approx \iso(u_{\sigma})$ breaks down before synchronization occurs.
We expect that suitable estimates may be obtained by quantifying the results of \cite{flandoli_synchronization_2017}, a problem which we believe to be of independent interest.
The work \cite{vorkastner_approaching_2020} is a first step in this direction.

\subsection{Applications}

Our work also has potential implications for applications, e.g., for the biophysical systems mentioned in~\cref{sec:intro}. For example, spatial synchronization by noise of nerve impulses along axons could be a natural biological robustness mechanism to avoid multiple short-time shifted pulses to arrive at a single neuron within a short time span. Furthermore, our proof also reveals two important aspects: (I) there is only an intermediate synchronization window of a time scale $t_{\sigma}$ with $\sigma^{-2} \ll t_{\sigma} \ll \exp(\sigma^{-2})$, and (II) spatially homogeneous noise potentially could be beneficial for synchronization as suggested by~\cref{ass:scaling}. At first, (I)-(II) seem counter-intuitive for certain applications, e.g., for neuronal dynamics. Yet, (I) can be desirable for controlling pulse dynamics to a critical state, similarly to other self-organized criticality mechanisms, as small perturbations of the time scale can lead to different information processing outcomes. (II) might reveal a connection to chemical and electrical control of action potential propagation, i.e., changing from a heterogeneous chemical or electrical potential around the axon to a more homogeneous one may increase the propensity to synchronize slightly separated pulses along axons.

\printbibliography

\end{document}